\newtheorem{theorem}{Theorem}[section]
\newtheorem{lemma}[theorem]{Lemma}
\newtheorem{prop}[theorem]{Proposition}
\theoremstyle{definition}
\newtheorem{req}{Remark}
\DeclareMathOperator\inter{int}
\def\N{\mathbb{N}}
\def\R{\mathbb{R}}
\let\O=\Omega
\let\e=\varepsilon
\let\t=\tilde
\let\ol=\overline
\let\mc=\mathcal
\def\1{\mathbbm{1}}
\def\pe{principal eigenvalue}
\def\pf{principal eigenfunction}
\def\l{\lambda_1}
\def\thm#1{Theorem~\ref{thm:#1}}
\newenvironment{formula}[1]{\begin{equation}\label{#1}}
                       {\end{equation}\noindent}
\def\Fi#1{\begin{formula}{#1}}
\def\Ff{\end{formula}\noindent}
\renewcommand{\theequation}{\arabic{section}.\arabic{equation}}
\newcommand{\rest}[2]{\!\left.\!\hspace{1pt}#1\right|_{#2}}
\title{\bf Generalized principal eigenvalues for heterogeneous road-field systems}
\author[1]{Henri {\sc Berestycki}}
\author[1, 2]{Romain {\sc Ducasse}}
\author[1]{Luca {\sc Rossi}}
\affil[1]{Ecole des Hautes \'Etudes en Sciences Sociales, PSL University,  
		Centre d'Analyse et Math\'ematiques Sociales, 54 boulevard Raspail 
		75006 Paris, France}
\affil[2]{Aix Marseille Université,  
		I2M, Marseille}	
\begin{document}

\date{}
\maketitle


\noindent {\textbf{Keywords:} Systems of elliptic operators, line with fast diffusion, road-field model, generalized principal eigenvalue, Harnack inequality, KPP equations, reaction-diffusion systems,.} \\
\\
\noindent {\textbf{MSC:} 35K57, 92D25, 35B40, 35P15, 35K40.}

\begin{abstract}
This paper develops the notion and properties of the generalized
principal  eigenvalue for an elliptic  system coupling an equation in a plane with one on a line in this plane, together with boundary conditions that express exchanges taking place between the plane and the line. This study is motivated by the
 reaction-diffusion model introduced by H. Berestycki, J.-M. Roquejoffre and L. Rossi \cite{BRR1} to describe the effect  on biological invasions of networks with fast diffusion imbedded in a field. Here we study the eigenvalue associated with heterogeneous generalizations of this model.  In a forthcoming work \cite{BDR2} we show that persistence or extinction of the associated nonlinear evolution equation is fully accounted for by this generalized eigenvalue. A key element in  the proofs is a new Harnack inequality that we establish for these systems and which is of independent interest.
\end{abstract}

\section{Introduction}

\subsection{A model problem}
To illustrate the type of problems we investigate in this paper, let us describe the particular case of a half-plane $\Omega:= \{ (x,y) \in \R^2, \ y>0\}$ whose boundary we identify with $\R$. Let $a(x,y), \mu(x), \nu (x)$ be given, say bounded, functions defined in $\Omega$ 
and~$\R$ respectively and $d, D >0 $ be given. We consider the eigenvalue problem of the type: 

\begin{equation}\label{type}
\left\{
\begin{array}{rll}
-D \varphi^{\prime\prime} -  \nu(x) \psi\vert_{y=0} + \mu(x) \varphi &= \lambda \varphi,  \quad  \, &x\in \mathbb{R}, \\
-d\Delta \psi + a(x,y) \psi &=  \lambda \psi ,  \quad   \, &(x,y)\in \Omega := \mathbb{R}\times \R^+, \\
- d\partial_{y}\psi\vert_{y=0} - \mu (x) \varphi + \nu(x) \psi\vert_{y=0} &= 0, \quad & x\in \mathbb{R}= \partial \Omega.
\end{array}
\right.
\end{equation}
This systems involves two difficulties. First, it is set in an unbounded domain. Then it couples 
two equations set in domains of different dimensions. 

In this paper, our first task is to give a definition of the {\em generalized principal eigenvalue} for such a problem. This is done in the spirit of \cite{BNV, BR2}. We then establish a Harnack inequality for this type of systems. Such an inequality is at the core of the various following proofs. It first allows us to show the existence of a {\em generalized principal eigenfunction} in the unbounded domain, that is,
a pair of positive solutions~$(\varphi, \psi)$  of~\eqref{type}.
Using this fact, we can identify this principal eigenvalue as the limit of eigenvalues in bounded domain approximations. Lastly we derive a Rayleigh-Ritz type formula that yields this eigenvalue. Thanks to these results we also derive various qualitative properties of the generalized eigenvalue and eigenfunction. 

The system \eqref{type} is a particular case of the more general class of problems that we consider here. We state precise and general results in Section~\ref{Our model} below.

This class of eigenvalue problems is at the core of the understanding of models in ecology that we refer to  as ``road-field'' models. In the next section we describe this motivation.

\subsection{The road-field model}
Reaction-diffusion equations and systems arise in a wide range of applications, in ecology (dynamics of populations, spreading of invasive species,...), physics (combustion theory), and, more recently, in social sciences (riot propagations).  The iconic example is the Fisher-Kolmogorov-Petrovski-Piskunov equation (see the original papers \cite{F} for the biological motivations and \cite{KPP} for a mathematical study):
\begin{equation}\label{eq kpp}
\partial_{t}u -d\Delta u = f(u), \quad  t>0 ,\ x \in \mathbb{R}^{N}.
\end{equation}
The function $f$ is supposed to satisfy $f(0)=f(1)=0$, together with
$f>0$ on $(0,1)$,  and $s \mapsto \frac{f(s)}{s}$ is non-increasing (hence, $f$ is differentiable at $0^+$ with $f^{\prime}(0^+)>0$). More generally, we assume the KPP condition, that is: $f^{\prime}(0)s>f(s)>0$, for $s\in (0,1]$. A typical example is the logistic nonlinearity $f(s)=s(1-s)$.
In this setting, the solution $u(t,x)$ of \eqref{eq kpp} can be viewed as being the density of a population distributed on $\R^N$ subject to diffusion as arising from random motion of individuals, and to reproduction and mortality, as accounted by the reaction term $f$.

Basic results \cite{AW, KPP} state that any solution $u(t,x)$ of \eqref{eq kpp} arising from a non-negative, non-trivial, compactly supported initial datum, satisfies the \emph{invasion} property:
$$
u(t,x) \underset{t \to +\infty}{\longrightarrow} 1 \quad \text{ locally uniformly in }\ x.
$$
Moreover, one can define the \emph{speed of invasion} as a quantity $c_{KPP} \geq 0$, such that
$$
\forall \, c > c_{KPP}, \quad \displaystyle{\sup_{\vert x \vert \geq ct}} u(t,x) \underset{ t \to +\infty}{\longrightarrow} 0 
$$
$$
\forall \,  c \in [0, c_{KPP}), \quad \displaystyle{\inf_{\vert x \vert \leq ct  }} u(t,x)  \underset{ t \to +\infty}{\longrightarrow} 1.
$$
A classical result of \cite{AW} is that the speed of invasion can be explicitly computed in this case: $c_{KPP}=2\sqrt{d f^{\prime}(0^+)}$.

\medskip
Building on the Fisher-KPP model, H. Berestycki, J.-M. Roquejoffre and L.~Rossi~\cite{BRR1} introduced a system devised to describe the influence of a line with fast diffusion on biological invasions, that we call the \emph{road} in the sequel. The idea is to divide the population into two groups, considering two densities: $u$ denotes the density of population on the road, while $v$ stands for the density of population outside of the road (referred to as the \emph{field}). The model of \cite{BRR1} in a half-plane reads as follows:
\begin{equation}\label{fr normal}
\left\{
\begin{array}{rll}
\partial_{t}u-D \partial_{x}^{2}u &= \nu v\vert_{y=0} - \mu u,  \quad &t >0 \, , \, x\in \mathbb{R}, \\
\partial_{t}v-d\Delta v &= f(v),  \quad & t >0 \, , \, (x,y)\in \mathbb{R}\times \R^+, \\
- d\partial_{y}v\vert_{y=0} &= \mu u-\nu v\vert_{y=0}, \quad &t >0 \, , \, x\in \mathbb{R}.
\end{array}
\right.
\end{equation}
The second equation reflects the fact that, in the field $\R\times \R^+$, the evolution of the population is given by the usual Fisher-KPP equation \eqref{eq kpp} with diffusivity $d>0$. On the road $\R\times \{ 0 \}$, the population is subject to a diffusion equation with diffusivity $D>0$, which is a priori different from $d$. The third equation translates the exchanges between the road and the field: the parameter $\mu>0$ represents the rate of individuals on the road leaving it to reach the field, while the parameter $\nu>0$ represents the rate of individuals in the field to enter the road.

It is proven in \cite{BRR1} that the road enhances the speed of invasion in the field if the diffusion on the road $D$ is large enough, compared to $d$. More precisely, first, for any compactly supported non-negative, non-trivial initial datum $(u_{0},v_{0})$, spreading in the $x$-direction occurs with some speed $c_{BRR}\geq c_{KPP}$, depending on the parameters $d,D,\mu,\nu$. This speed satisfies $ c_{BRR}>c_{KPP}$ if and only if $D>2d$.

\medskip
In this paper, we introduce some tools necessary for the analysis of generalizations of the field-road system \eqref{fr normal}. We present such generalizations in the two next sections before getting to the more general case studied in this paper.

\subsection{Effect of a road on an ecological niche}

The system \eqref{fr normal} is homogeneous: every place in the field is equally suitable for the survival of the population. In many applications, this homogeneity hypothesis is not verified, and it is of the essence to take into account heterogeneities of the medium. The simplest way to do this is to consider coefficients that depend on the space variables. In the paper \cite{BDR2}, we study a model designed to account for the effect of a road on a species that can reproduce efficiently only in a bounded portion of the field (we say that the species has a \emph{bounded ecological niche}). In its simplest form, the model of \cite{BDR2} reads
\begin{equation}\label{fr cc}
\left\{
\begin{array}{rll}
\partial_{t}u-D\partial_{x}^{2}u &= \nu v\vert_{y=0} - \mu u,  \quad &t >0 \, , \, x\in \mathbb{R}, \\
\partial_{t}v-d\Delta v &= f(x,y,v),  \quad & t >0 \, , \, (x,y)\in \R \times \R^+, \\
- d\partial_{y}v\vert_{y=0} &= \mu u-\nu v\vert_{y=0}, \quad &t >0 \, , \, x\in \mathbb{R},
\end{array}
\right.
\end{equation}
where $f$ is a KPP-type nonlinearity such that $\limsup_{\vert (x,y) \vert \to +\infty}f_{v}(x,y,0) <0$.

The study of the heterogeneous model \eqref{fr cc} differs greatly from that of the homogeneous model \eqref{fr normal}. It is based on the notion of \emph{generalized principal eigenvalue} for the linearization near $(u,v) = (0,0)$ of the stationary system associated to \eqref{fr cc}, that is (where $f_{v}(x,y,0)$ denotes the derivative of $v\mapsto f(x,y,v)$ evaluated at $v=0$):
\begin{equation}\label{linear syst fr}
\left\{
\begin{array}{rll}
-Du^{\prime\prime} - \nu v\vert_{y=0} + \mu u \, &= \lambda u,  \quad &x\in \mathbb{R}, \\
-d\Delta v - f_v(x,y,0)v \, &= \lambda v,  \quad &(x,y)\in \R \times \R^+, \\
- d\partial_{y}v\vert_{y=0} &= \mu u-\nu v\vert_{y=0}, \quad &x\in \mathbb{R}.
\end{array}
\right.
\end{equation}
Observe that this system is the one \eqref{type} above with $ a(x, y) = -  f_v(x,y,0)v$. It   is set on an unbounded domain. When working with elliptic operators on smooth bounded domains, the Krein-Rutman theorem (see, e.g., \cite{KR}) gives the existence of a \emph{principal eigenvalue}. It is the smallest eigenvalue, and it is characterized by the existence of a positive eigenfunction. In our context, this notion is more delicate since the domain is unbounded and the system couples equations set in different dimensions. The definition of a notion of generalized principal eigenvalue for systems such as \eqref{linear syst fr} and the study of its properties are precisely the subject of the present work.

The generalized principal eigenvalue $\l $ for \eqref{linear syst fr} is defined as follows:
\begin{multline*}
\lambda_{1}:=\sup\Big\{\lambda \in \mathbb{R} \ : \ \exists (u,v) \geq (0,0),\ (u,v)\not\equiv (0,0) \ \text{ such that } \ \\ 
\quad\quad\quad Du^{\prime\prime}+\nu v\vert_{y=0} - \mu u + \lambda u \leq 0,\  d\Delta v + f_{v}(x,y,0)v+\lambda v  \leq 0 ,\\ \text{ and }\ d\partial_{y}v\vert_{y=0} + \mu u-\nu v\vert_{y=0} \leq 0 \Big\}.
\end{multline*}

The usefulness of $\l$ is that it completely characterizes the long-time behavior of~\eqref{fr cc}. More precisely, we prove in our forthcoming paper \cite{BDR2} that \emph{invasion} occurs (that is, the solutions converge to a positive stationary solution) if $\l<0$ while \emph{extinction} occurs (i.e., every solution goes to zero) if $\l \geq 0$. 

Then, using some of the properties of $\lambda_1$ derived in the present paper, we deduce that the presence of a road can never have a positive effect on the survival of the population, but on the contrary it might have a deleterious effect leading to the extinction of the population.

Two crucial properties of the generalized principal eigenvalue are:
\begin{enumerate}[(1)]
\item The existence of a positive eigenfunction $(u,v)$ associated with $\l$.

\item The generalized principal eigenvalue $\l$ is approximated by the principal eigenvalue for the same problem on \emph{truncated domains}, with Dirichlet boundary conditions on the truncation.
\end{enumerate}

Both properties are well-known for the standard principal eigenvalue of an elliptic operator, but their validity in the context of the road-field systems such as (1.4) was not known up to now. They are two of the main results of the present paper, see 
Theorems \ref{thm:gpef} and \ref{thm:truncated} below. The key ingredient to derive them is a \emph{Harnack inequality} for road-field systems: for any compact sets $K_1 \subset \R$ and $K_2 \subset \R \times [0,+\infty)$, there is a constant $C>0$ such that any positive solution $(u,v)$ of \eqref{linear syst fr} satisfies
$$
\max\left\{ \sup_{K_1}u   ,   \sup_{K_2}v \right\} \leq 
C\min\left\{\inf_{K_1}u  ,  \inf_{K_2}v \right\}.
$$

Let us mention that a notion of generalized principal eigenvalue for road-field models has been recently
 introduced by T. Giletti, L. Monsaingeon and M. Zhou in~\cite{GMZ}. They prove that the properties $(1)$ and $(2)$ mentioned above hold for the specific case of road-field models with periodic exchanges coefficients.

\subsection{Extension to the case of a road through a field}

In the models \eqref{fr normal} and \eqref{fr cc}, the field is a half-plane and the road is its boundary.  It is natural to consider a field that is the whole plane, with the road being, say, the line $\R\times\{0\}$, dividing the plane into an upper field and a lower field. In symmetric situations such as \eqref{fr cc}, this is tantamount to considering the system in the half-plane. However, for general non-symmetric dependence with respect to the vertical variable, the description of the model requires $3$ equations: one for each side of the field, one for the road, with two boundary conditions that are exchanges conditions, one for each of the two connections road-field. The densities of population in the upper and lower field are denoted by $v_1$, $v_2$ respectively. We do not assume that the exchanges between the road and the field are the same for each side of the field. The system reads
\begin{equation}\label{pas sym}
\left\{
\begin{array}{rll}
\partial_{t}u-D\partial_{x}^{2}u&= \nu_1 v_1\vert_{y=0}+\nu_2 v_2\vert_{y=0}-(\mu_1 + \mu_2) u,  \quad &t >0 \, , \, x\in \mathbb{R}, \\
\partial_{t}v_i-d_i\Delta v_i&= f_i(x,y,v_i),  \quad & t >0 \, , \, (x,y)\in \mathbb{R}\times \mathbb{R}^{+}, \\
-d_i\partial_{y}v_i\vert_{y=0} &= \mu_i u_i-\nu_i v_i\vert_{y=0}, \quad &t >0 \, , \, x\in \mathbb{R}.
\end{array}
\right.
\end{equation}
In the case where the exchanges and the nonlinearity are symmetric, i.e., $\mu_1=\mu_2$, $\nu_1=\nu_2$ and $f_1 = f_2$, then \eqref{pas sym} reduces \eqref{fr cc}.

The long-time behavior of system \eqref{pas sym} is also characterized by the sign of the generalized principal eigenvalue of its linearization near $(u,v_1,v_2)=(0,0,0)$. This eigenvalue is defined by the following formula, that generalizes the previous one:
\begin{multline*}
\lambda_{1}:=\sup\Big\{\lambda \in \mathbb{R} \ : \ \exists (u,v_1,v_2) \geq (0,0,0),\ (u,v_1,v_2)\not\equiv (0,0,0) \ \text{ such that } \ \\ 
 Du^{\prime\prime}+\nu_1 v_1\vert_{y=0}+\nu_2 v_2\vert_{y=0}-(\mu_1 + \mu_2) u+ \lambda u \leq 0,\\ 
  d_i\Delta v_i + f_{i,v}(x,0)v_i+\lambda v  \leq 0 ,\  d_i\partial_{y}v_i\vert_{y=0} +\mu_i u_i-\nu_i v_i\vert_{y=0} \leq 0 \ \text{ for }\ i=1,2 \Big\}.
\end{multline*}

\subsection{Model with climate change}

In the paper \cite{BDR2}, where we study the effect of a road on an ecological niche, we also take consider the possibility for the niche to move as time goes by. This reflects the effect of a climate change (see \cite{BDNZ}). This leads us to consider the system
\begin{equation*}
\left\{
\begin{array}{rll}
\partial_{t}u-D\partial_{x}^{2}u  &= \nu_1 v_1\vert_{y=0}+\nu_2 v_2\vert_{y=0}-(\mu_1 + \mu_2) u,  \quad &t >0 \, , \, x\in \mathbb{R}, \\
\partial_{t}v_i-d_i\Delta v_i  &= f_i(x-ct,y,v_i),  \quad & t >0 \, , \, (x,y)\in \mathbb{R}\times \mathbb{R}^{+}, \\
-d_i\partial_{y}v_i\vert_{y=0} &= \mu_i u_i-\nu_i v_i\vert_{y=0}, \quad &t >0 \, , \, x\in \mathbb{R},
\end{array}
\right.
\end{equation*}
which is again a generalization of \eqref{fr normal}, \eqref{fr cc} and \eqref{pas sym}, which is recovered in the particular case $c=0$ of this system. There, the heterogeneities move with constant speed $c$ in the direction of the road. That is, $c$ represents the speed of a climate change. Rewriting this system in the frame that keeps pace with the shifting environment, we obtain
\begin{equation}\label{fr gen 1}
\left\{
\begin{array}{rll}
\partial_{t}u-D\partial_{x}^{2}u -c \partial_{x}u &= \nu_1 v_1\vert_{y=0}+\nu_2 v_2\vert_{y=0}-(\mu_1 + \mu_2) u,  \quad &t >0 \, , \, x\in \mathbb{R}, \\
\partial_{t}v_i-d_i\Delta v_i - c \partial_{x}v_i &= f_i(x,y,v_i),  \quad & t >0 \, , \, (x,y)\in \mathbb{R}\times \mathbb{R}^{+}, \\
-d_i\partial_{y}v_i\vert_{y=0} &= \mu_i u_i-\nu_i v_i\vert_{y=0}, \quad &t >0 \, , \, x\in \mathbb{R}.
\end{array}
\right.
\end{equation}
We denote by $a_i(x,y)$ the derivative of $v\mapsto f_{i}(x,y,v)$ evaluated at $v=0$. The linearization near $(u,v_1,v_2) = (0,0,0)$ of \eqref{fr gen 1} reads
\begin{equation}\label{lin 1}
\left\{
\begin{array}{rll}
-Du^{\prime\prime} -c u^{\prime} &= \nu_1 v_1\vert_{y=0}  + \nu_2 v_2\vert_{y=0}   -(\mu_1 + \mu_2) u,  \quad &x\in \mathbb{R}, \\
-d_i\Delta v - c\partial_{x}v_i &= a_i(x,y)v_i,  \quad &(x,y)\in \mathbb{R}\times \mathbb{R}^{+}, \\
-d_i\partial_{y}v_i\vert_{y=0} &= \mu_i u-\nu_i v_i\vert_{y=0}, \quad &x\in \mathbb{R}.
\end{array}
\right.
\end{equation}
As before, the generalized principal eigenvalue of this system is defined by
\begin{multline*}
\lambda_{1}:=\sup\Big\{\lambda \in \mathbb{R} \ : \ \exists (u,v_1,v_2) \geq (0,0,0),\ (u,v_1,v_2)\not\equiv (0,0,0) \ \text{ such that } \ \\ 
 Du^{\prime\prime}+cu^{\prime}+\nu_1 v_1\vert_{y=0}+\nu_2 v_2\vert_{y=0}-(\mu_1 + \mu_2) u+ \lambda u \leq 0,\\ 
  d_i\Delta v_i + c\partial_{x}v_i+a_i(x,y)v_i+\lambda v_i  \leq 0 ,\  d_i\partial_{y}v_i\vert_{y=0} +\mu_i u_i-\nu_i v_i\vert_{y=0} \leq 0 \ \text{ for }\ i=1,2 \Big\}.
\end{multline*}
Even in this general setting, we derive the existence of a generalized principal eigenfunction 
(Theorem \ref{thm:gpef}) and the approximation of the generalized principal eigenvalue by the principal eigenvalues of the same system restricted to truncated domains (Theorem~\ref{thm:truncated}). 
Moreover, we prove in \cite{BDR2} that the sign of the generalized principal eigenvalue characterizes the long-time behavior of \eqref{fr gen 1}. Therefore, understanding the impact of a climate change on a population, combined with the presence of a road, is tantamount to study how $\l$ depends on the coefficients of the system. We prove in \cite{BDR2} that there are two critical speeds $0\leq c_{\star}\leq c^{\star}$ such that persistence occurs if $\vert c \vert < c_{\star}$, i.e., if the climate change is not too fast, and extinction occurs if $\vert c \vert \geq c^{\star}$, that is, if the climate change is too fast. We also prove that adding a line with fast diffusion can decrease $\l$. As a consequence, we exhibit situations where adding a road allows a population to persist in face of a faster climate change than it would without the road. The keystone to derive these results is a Rayleigh-Ritz characterization of $\l$, see Proposition \ref{variational} below.

\subsection{Related works and perspectives}

The model \eqref{fr normal} has been extended in several ways. The article \cite{BRR2} studies the influence of drift terms and mortality on the road. In a further paper \cite{BRR3}, H. Berestycki, J.-M. Roquejoffre and L. Rossi \cite{BRR3} computes the spreading speed in all directions of the field. The paper \cite{GMZ} treats the case where the exchanges coefficients $\mu, \nu $ are not constant but periodic in $x$, \cite{P1, P2} studieds non-local exchanges and \cite{Diet1} considers a combustion nonlinearity instead of the KPP one. The articles \cite{BCRR_sem,BCRR} study the effect of non-local diffusion. Different geometric situations are considered in \cite{RTV, Dcurved}. The first one treats the case where the field is a cylinder with its boundary playing the role of the road, and the second one studies the case where the road is curved.

The system \eqref{fr cc}, describing the interaction between a line with fast diffusion and climate change is inspired by the climate change model of \cite{BDNZ} (see also \cite{BRforcedspeed}).

The notion of generalized principal eigenvalue was introduced by H. Berestycki, L.~Nirenberg and S. Varadhan in \cite{BNV} to study elliptic operators on non-smooth domains, with Dirichlet boundary conditions. It was later used by two of the authors of the present article \cite{BR1, BR2} to study elliptic operators in unbounded domains, also with Dirichlet boundary conditions. Using this notion in \cite{BHReigen}, H. Berestycki, F. Hamel and L. Rossi show that the generalized principal eigenvalue plays a crucial role in the study of reaction-diffusion equations, to characterize the validity of the maximum principle, the existence of stationary solutions, as well as the existence of speeds of invasions and of traveling fronts.

Let us mention some perspective and natural extensions for this work. First, the results presented here could be used to study KPP reaction-diffusion equations on domains with \emph{junctions}. In their simplest setting, these are domains consisting of metric graphs, that is, vertices in $\R^N$ connected by straight segment of lines. On the edges, we consider reaction-diffusion equations, at the vertices, exchanges equations (such as the Kirchoff laws), serve as boundary conditions. Such models are widely studied in the context of Hamilton-Jacobi equations and Schrödinger equations. Note that, in system \eqref{fr gen 1}, the junction is a straight line, and we have two domains connected through this junctions.  Of interest is also the study of more realistic road-field models, with several roads.

Another extension would be to consider ``volume-surface" reaction-diffusion systems. These systems appear in crystallography, to describe the growth of crystals \cite{SV3}, and in cell-biology, to describe the dynamic of proteins within the cell-body and on the cell cortex, or to describe asymmetric stem cell divisions \cite{VS1, SV2}. These models are similar to the road-field models presented here, the main difference being that they involve bounded domains (representing the bulk of a cristal or the cell-body), whose boundaries play the role of our road.

An open problem is to construct a more general formulation of elliptic operators defined on manifold with boundaries. It would be interesting to have a general framework for the Harnack inequalities and existence of eigenfunctions of which all these results are particular cases.

\section{Hypotheses and statement of the results}\label{Our model}
\subsection{Hypotheses and definitions}

We consider in this paper the following linear system:
\begin{equation}\label{lin}
\left\{
\begin{array}{rll}
-Du^{\prime\prime} -c u^{\prime} &= \nu_1 v_1\vert_{y=0}  + \nu_2 v_2\vert_{y=0}   -(\mu_1 + \mu_2) u,  \quad &x\in \mathbb{R}, \\
-d_i\Delta v - c_i\partial_{x}v_i &= a_i(x,y)v_i,  \quad &(x,y)\in \mathbb{R}\times \mathbb{R}^{+}, \\
-d_i\partial_{y}v_i\vert_{y=0} &= \mu_i u-\nu_i v_i\vert_{y=0}, \quad &x\in \mathbb{R}.
\end{array}
\right.
\end{equation}
The parameters $D, d_1, d_2, \nu_1, \nu_2, \mu_1, \mu_2$ are positive constants, while $c, c_1, c_2$ are real numbers, possibly equal to zero, and the $a_i$ are locally Lipschitz continuous functions globally bounded on $\R\times[0,+\infty)$. Observe that this system is more general than~\eqref{lin 1} because the drift terms need not be the same.
%

For notational simplicity, we introduce the following five linear operators (the index $i$ takes the values $1$ and~$2$):
\begin{equation*}
\left\{
\begin{array}{lllrl}
L_{0}(u,v_1,v_2) &:= Du^{\prime\prime}+cu^{\prime} +\nu_1 v_1\vert_{y=0} + \nu_2 v_2\vert_{y=0}- (\mu_1 + \mu_2) u, \\
L_{i}(v) &:= d_i\Delta v+c_i\partial_{x}v+a_i(x,y)v , \\
B_i(u,v) &:= d_i\partial_{y}v\vert_{y=0}+\mu_i u-\nu_i v\vert_{y=0} .
\end{array}
\right.
\end{equation*}

These operators are understood to act on functions $(u,v_1,v_2) \in W^{2,p}_{loc}(\R)\times ( W^{2,p}_{loc}(\R \times [ 0, +\infty)) )^2$. 
For technical reasons, we restrict to $p>2$: first, in order to apply the $L^p$ theory for elliptic equations (see \cite[Chapter 9]{GT}) on a domain $\O \subset \R^2$, one should work in $W^{2,p}(\O)$ for $p\geq 2$. Moreover, by taking $p>2$, the Morrey inequality (see \cite[Theorem 7.17]{GT}) ensures that, if $\O \subset \R^{2}$ is smooth and bounded, then $W^{2,p}(\O) \subset C^{1}(\O)$, and this inclusion is compact.

\medskip
Here is our definition of the generalized principal eigenvalue of \eqref{lin}:
\begin{multline}\label{fgpe}
\lambda_{1}:=\sup\Big\{\lambda \in \mathbb{R} \ : \ \exists (u,v_1,v_2) \geq 0,\ (u,v_1,v_2)\not\equiv (0,0,0) \ \text{ such that } \ \\ 
L_{0}(u,v_1,v_2)+\lambda u \leq 0,\  L_{i}(v_i)+\lambda v_i  \leq 0 ,\  B_i(u,v_i) \leq 0\ \text{ for } \ i=1,2 \Big\}.
\end{multline}
The functions $(u,v_1,v_2)$ in the above and subsequent definitions always belong to $W^{2,p}_{loc}(\R)\times (W^{2,p}_{loc}(\R \times [0,+\infty)))^2$, for some $p>2$. 
Throughout the paper, unless otherwise specified, differential equalities and inequalities are understood to hold almost everywhere.

\subsection{Statement of the results}

We now state our results in the general context of system \eqref{lin}.

The first one is the existence of a generalized principal eigenfunction.
\begin{theorem}\label{thm:gpef}
	There exists a triplet $(u,v_1,v_2)$ 
	of positive functions satisfying, for $i=1,2$,
	\begin{equation*}
	\left\{
	\begin{array}{rll}
	-Du^{\prime\prime} - cu^{\prime}-\nu_1 v_1\vert_{y=0}- \nu_2 v_2\vert_{y=0}+(\mu_1 + \mu_2) u
	&=  \lambda_{1}u, \quad &x \in  \mathbb{R}, \\
	-d_i\Delta v_i-c_i\partial_{x}v_i - a_i v_i  &=  \lambda_{1}v_i, \quad &(x,y) \in   \mathbb{R}\times \mathbb{R}^{+},  \\
	-d_i\partial_{y}v_i\vert_{y=0} - \mu_i u+\nu_i v_i\vert_{y=0} &= 0, \quad &x \in  \mathbb{R}.
	\end{array}
	\right.
	\end{equation*}
	We call $(u,v_1,v_2)$ a {\em generalized principal eigenfunctions} triplet associated with $\l$.
\end{theorem}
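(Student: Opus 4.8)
The plan is to obtain $(u,v_1,v_2)$ as a limit of principal eigenfunctions on a sequence of bounded approximating domains, using the Harnack inequality to prevent degeneration. First I would fix an exhausting sequence of truncated domains: for $R>0$ let $\Gamma_R:=(-R,R)\subset\R$ be the truncated road and $\Omega_R^i:=(-R,R)\times(0,R)$ (or a smoothed version thereof) the truncated fields, and consider the system \eqref{lin} on these domains with Dirichlet conditions $u=0$ on $\partial\Gamma_R$, $v_i=0$ on $\partial\Omega_R^i\setminus(\Gamma_R\times\{0\})$, retaining the exchange conditions $B_i(u,v_i)=0$ on $\Gamma_R\times\{0\}$. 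On such a bounded configuration the operator is cooperative (the coupling terms $\nu_i v_i|_{y=0}$ and $\mu_i u$ enter with the correct signs), so a Krein–Rutman / Donsker–Varadhan type argument gives a principal eigenvalue $\lambda_R$ with a positive eigenfunction triplet $(u_R,v_1^R,v_2^R)$, normalized say by $u_R(0)+v_1^R(0,1)+v_2^R(0,1)=1$ or by an $L^\infty$ normalization on a fixed compact set. Standard monotonicity of Dirichlet principal eigenvalues in the domain shows $\lambda_R$ is nonincreasing in $R$, and one checks $\lambda_R\downarrow\lambda_\infty\ge\lambda_1$ using the definition \eqref{fgpe} (each $(u_R,v_1^R,v_2^R)$, extended by $0$, is an admissible test triple for $\lambda=\lambda_R$, giving $\lambda_R\le\lambda_1$ after a slight argument, while the reverse needs the passage to the limit below); in fact the identification $\lambda_\infty=\lambda_1$ is essentially Theorem~\ref{thm:truncated}, which I may invoke.

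Next I would pass to the limit $R\to\infty$. The Harnack inequality stated in the introduction, applied to the positive solutions $(u_R,v_1^R,v_2^R)$ on any fixed compact pair $K_1\subset\R$, $K_2\subset\R\times[0,+\infty)$ (valid once $R$ is large enough that the compacts sit well inside the truncation), gives two-sided bounds
\[
\max\Big\{\sup_{K_1}u_R,\ \sup_{K_2}v_i^R\Big\}\le C\min\Big\{\inf_{K_1}u_R,\ \inf_{K_2}v_i^R\Big\}
\]
with $C$ independent of $R$. Combined with the normalization, this yields uniform local $L^\infty$ bounds on the whole triple. Then interior and boundary $L^p$ elliptic estimates (Chapter 9 of \cite{GT}, with $p>2$), together with the Morrey embedding $W^{2,p}\hookrightarrow C^1$ quoted in the text, upgrade these to uniform local $W^{2,p}$ and $C^{1,\alpha}$ bounds on both the field components $v_i^R$ (up to the boundary $y=0$, using the oblique/Neumann-type exchange condition $B_i=0$) and the road component $u_R$. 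A diagonal extraction then produces a limit triple $(u,v_1,v_2)$ in $W^{2,p}_{loc}$ which solves \eqref{lin} with $\lambda=\lambda_1$, satisfies the exchange conditions, and is nonnegative and not identically zero (the normalization survives the limit). Finally the Harnack inequality applied to the limit triple, or the strong maximum principle together with the coupling through $B_i$, forces $u>0$ and $v_1,v_2>0$ everywhere.

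The main obstacle I expect is the Harnack inequality itself — but since that is an announced result of the paper which I am allowed to assume, within this proof the delicate point is ensuring the uniform-in-$R$ estimates are genuinely boundary-uniform: the field estimates near $\{y=0\}$ must be carried out for the oblique exchange boundary condition $d_i\partial_y v_i|_{y=0}=\mu_i u-\nu_i v_i|_{y=0}$ with a right-hand side ($u$) controlled only through the coupled Harnack bound, so one has to set up the a priori estimates for the pair simultaneously rather than equation by equation. A secondary technical point is the identification $\lambda_\infty=\lambda_1$ and the verification that the limiting triple is nontrivial, i.e.\ that the normalization does not concentrate and escape in the limit; the Harnack inequality is exactly what rules this out, by propagating a positive lower bound on a fixed compact set to all of $\R$ and $\R\times[0,\infty)$.
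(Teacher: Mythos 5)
Your proposal follows essentially the paper's own route: truncate, apply Krein--Rutman on the bounded configuration, normalize, use the Harnack inequality for uniform local $L^\infty$ bounds, upgrade to $W^{2,p}_{\rm loc}$ via elliptic estimates (with care near $\{y=0\}$), extract a limit, and conclude positivity by the strong maximum principle. Two remarks. First, the parenthetical claim that ``each $(u_R,v_1^R,v_2^R)$, extended by $0$, is an admissible test triple for $\lambda=\lambda_R$, giving $\lambda_R\le\lambda_1$'' is wrong in both direction and mechanism: extension by zero of a Dirichlet eigenfunction is a \emph{sub}solution (Hopf lemma gives a strictly negative outward normal derivative at the cut, so the distributional Laplacian of the extension has an unfavourable singular part), not a supersolution, and the correct inequality is $\lambda_1^R\ge\lambda_1$, obtained by \emph{restriction}: any positive supersolution on the full domain restricts to one on $I_R,\Omega_R$, so the restricted sup (which equals $\lambda_1^R$ by the Proposition~\ref{varform}-type identification) dominates $\lambda_1$. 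Monotonicity in $R$ is likewise by restriction, not extension. The reverse inequality $\lambda_1\ge\lim_R\lambda_1^R$ is exactly what your passage to the limit supplies, and this is why the paper proves Theorems~\ref{thm:gpef} and~\ref{thm:truncated} simultaneously rather than allowing one to invoke the other. Second, on the technical point you rightly flag: the paper makes the boundary $W^{2,p}$ estimate for $v_i$ work by the substitution $\tilde v=v_ie^{-\nu_i y/d_i}$, subtracting $\frac{\mu_i}{d_i}u(x)y$ to kill the inhomogeneous Neumann datum, and then reflecting across $\{y=0\}$ to reduce to an interior estimate for a divergence-form operator with bounded coefficients; your phrase ``set up the a priori estimates for the pair simultaneously'' gestures in the right direction but the concrete device is this change of unknown plus reflection, with the $u$-dependence entering only through the already-controlled $W^{2,p}(I_{M+1})$ norm of $u_R$.
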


The strategy for proving \thm{gpef} consists in deriving a constructive characterization of $\l$ as the limit
of ``standard'' principal eigenvalues, in the spirit of~\cite{BNV,BR2}.
For this, we approximate the half-plane and its boundary by the sequence of truncated domains
$(\O_R)_{R>0}$, $(I_R)_{R>0}$ defined by
$$
\O_R := B_R \cap (\R \times \R^+), \qquad   I_R := (-R,R),
$$
where $B_R$ denotes the (open) ball of radius $R$ and of center $(0,0)$ in $\R^2$. 
Then, we consider the eigenproblem in such domains imposing a Dirichlet condition on the new portion of the boundary 
introduced by the truncation, that is, 
\begin{equation}\label{eigborn}
\left\{
\begin{array}{rll}
-L_{0}(u,v_1,v_2) &= \lambda u \  &\text{ in } I_R, \\
-L_{i}(v_i) &= \lambda v_i \ &\text{ in }\  \Omega_{R}, \\
B_i(u,v_i) &= 0 \ &\text{ in }  \ I_R, \\
v_i &= 0 \ &\text{ on} \   (\partial \Omega_{R})\backslash (I_R\times\{0\}), \\
u(-R)=u(R)&=0 .
\end{array}
\right.
\end{equation}
%
The boundedness of the domain allows us to apply the Krein-Rutman theory which provides the existence of a 
generalized principal eigenvalue, that is, 
a unique $\lambda=\l^R$ such that \eqref{eigborn} admits a solution with $u,v_1,v_2$ positive inside their domains of definition.
Moreover, such solution is unique (up to a scalar multiple); we call it the principal eigenfunctions triplet for \eqref{eigborn}
and we denote it by $(u_R, v_{1,R}, v_{2,R})$.
The Krein-Rutman theorem requires a strong positivity property for the system, that we derive in the Appendix. 
Let us point out that the domain $\O_R$ is non-smooth. From one hand, 
this introduces some technical difficulties, that we overcome using a reflection argument.
From the other hand, the lack of regularity of $\O_R$ is precisely what entails that the eigenfunctions
are smooth up to the boundary. 
Note indeed that, if the domain $\O_R$ were smooth, then at the junction between its boundary 
and the line $\R\times\{0\}$, 
a $C^1$ solution should satisfy both the Dirichlet and the Robin-type condition, whence 
$v_i = -\partial_{y}v_i =0 $. The Hopf Lemma would then yield that $v_i \equiv 0$, for $i=1,2$, and then $u \equiv 0$.

%

Once the principal eigenvalues in the truncated domains are at hand, we derive the convergence result.
\begin{theorem}\label{thm:truncated}
	The generalized principal eigenvalues $(\l^R)_{R>0}$ of \eqref{eigborn} satisfy
	$$
	\lambda_{1}^{R} \underset{R \nearrow +\infty}{\searrow} \lambda_{1}.
	$$
\end{theorem}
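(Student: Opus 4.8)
The plan is to prove the two inequalities $\lambda_1\le\lambda_1^R$ for every $R>0$, that $(\lambda_1^R)_R$ is monotone nonincreasing, and that $\lambda_1^R\to\lambda_1$ as $R\to\infty$, the limit being the only delicate point.

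\emph{Monotonicity and the lower bound $\lambda_1^R\ge\lambda_1$.} First I would show $R\mapsto\lambda_1^R$ is nonincreasing. Given $R'<R$, extend the principal eigenfunction triplet $(u_{R'},v_{1,R'},v_{2,R'})$ of \eqref{eigborn} on $\O_{R'}$ by zero to $\O_R$; since it vanishes on the truncation boundary of $\O_{R'}$ and on $\{u(\pm R')=0\}$, the extension is an admissible supersolution triplet for the operators $-L_0,-L_i,B_i$ on the larger domain with $\lambda=\lambda_{1}^{R'}$ (the relevant differential inequalities hold a.e., and the boundary inequality $B_i\le0$ is preserved because we only add a nonnegative normal-derivative contribution where $v_i$ is dragged up from $0$; this should be checked but is standard for nonnegative extensions). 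By the variational/sup-characterization of $\lambda_1^{R}$ provided by Krein--Rutman on the bounded domain (equivalently by a comparison argument on $\O_R$), one gets $\lambda_1^{R}\le\lambda_1^{R'}$. The same zero-extension argument, now extending $(u_R,v_{1,R},v_{2,R})$ by zero to all of $\R$ and $\R\times[0,\infty)$, produces an admissible competitor in the definition \eqref{fgpe} of $\lambda_1$ for $\lambda=\lambda_1^R$, hence $\lambda_1\ge\lambda_1^R$ for every $R$; wait — this gives the wrong direction, so instead I argue: any competitor $(u,v_1,v_2)$ in \eqref{fgpe} for some $\lambda$, restricted suitably and compared against $(u_R,v_{1,R},v_{2,R})$ on $\O_R$ via the generalized maximum principle for the bounded system (Krein--Rutman uniqueness of the sign-changing threshold), forces $\lambda\le\lambda_1^R$; taking the sup over admissible $\lambda$ gives $\lambda_1\le\lambda_1^R$. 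Combined with monotonicity, $(\lambda_1^R)_R$ decreases to a limit $\lambda_\infty\ge\lambda_1$.

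\emph{The upper bound $\lambda_\infty\le\lambda_1$ via a diagonal/Harnack argument.} This is the heart of the proof. Normalize the principal eigenfunction triplets, say $u_R(0)=1$ (using positivity of $u_R$ on $I_R$, with a fallback normalization on the $v_i$'s should $u_R(0)$ degenerate; the Harnack inequality will rule this out). Fix any compact $K_1\subset\R$, $K_2\subset\R\times[0,\infty)$ and take $R$ large; the \emph{Harnack inequality for road-field systems} stated in the introduction, applied to the positive solution $(u_R,v_{1,R},v_{2,R})$ of \eqref{lin} (with $\lambda_1^R$ in place of the spectral parameter, absorbing $\lambda_1^R$ into the bounded zeroth-order terms since $\lambda_1^R$ lies in the bounded range $[\lambda_\infty,\lambda_1^{R_0}]$) yields uniform two-sided bounds for $u_R,v_{i,R}$ on $K_1,K_2$. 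Then the interior $W^{2,p}_{loc}$ elliptic estimates (\cite[Chapter 9]{GT}), together with the boundary estimates up to $\R\times\{0\}$ coming from the Robin-type conditions $B_i=0$ — and crucially using that we stay away from the Dirichlet truncation boundary — give uniform $W^{2,p}_{loc}$ bounds. By Morrey's embedding $W^{2,p}\hookrightarrow C^1$ (compact, as noted in the excerpt, since $p>2$) and a diagonal extraction over an exhaustion $R_n\to\infty$, a subsequence converges in $C^1_{loc}$ (and weakly in $W^{2,p}_{loc}$) to a triplet $(u_\infty,v_{1,\infty},v_{2,\infty})$ which is nonnegative, solves
\begin{equation*}
\left\{
\begin{array}{rll}
-L_0(u_\infty,v_{1,\infty},v_{2,\infty}) &= \lambda_\infty u_\infty &\text{ in }\R,\\
-L_i(v_{i,\infty}) &= \lambda_\infty v_{i,\infty} &\text{ in }\R\times\R^+,\\
B_i(u_\infty,v_{i,\infty}) &= 0 &\text{ in }\R,
\end{array}
\right.
\end{equation*}
and is not identically zero because $u_\infty(0)=1$ by the normalization. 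This triplet is an admissible competitor in \eqref{fgpe} for $\lambda=\lambda_\infty$ (the inequalities hold with equality), hence $\lambda_\infty\le\lambda_1$. Together with $\lambda_\infty\ge\lambda_1$ we conclude $\lambda_1^R\searrow\lambda_1$.

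\emph{Main obstacle.} The delicate step is the compactness passage to the limit: one must obtain \emph{uniform} bounds on the eigenfunction triplets on every fixed compact set, uniformly in $R$, and this is exactly what the new Harnack inequality for these mixed-dimensional systems delivers — without it, the $v_{i,R}$ could a priori concentrate or vanish on compacts as $R\to\infty$ (recall they are forced to $0$ on the far truncation boundary). A secondary technical point is the regularity of the limit up to the line $\R\times\{0\}$: since $\O_R$ is non-smooth at the corners $(\pm R,0)$, one works on compacts that for large $R$ are uniformly interior to $\O_R$ near those corners, so only the flat boundary $\R\times\{0\}$ matters there, and the reflection argument already used to set up \eqref{eigborn} gives the needed up-to-the-boundary estimates. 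Finally, one should double-check that the normalization does not degenerate; a uniform lower bound on, say, $u_R(0)$ follows from Harnack applied on a fixed compact containing $0$ together with a nondegeneracy argument (if all of $u_R,v_{1,R},v_{2,R}$ were uniformly small on a large compact one derives a contradiction with the eigenvalue equation and the coupling, as in \cite{BR2}).
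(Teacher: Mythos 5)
Your overall strategy is the same as the paper's: establish monotonicity of $R\mapsto\lambda_1^R$ together with $\lambda_1^R\ge\lambda_1$, then extract a limit eigentriplet via the road-field Harnack inequality, interior and reflected-boundary $W^{2,p}$ estimates, and a diagonal argument, and plug the nontrivial limit into \eqref{fgpe} to close the loop. The paper normalizes by $u_R(0)+v_{1,R}(0,0)+v_{2,R}(0,0)=1$ rather than $u_R(0)=1$, but the Harnack inequality makes either choice work. The compactness part of your argument, including the remark that only the flat portion of the boundary is relevant on fixed compacts for $R$ large, matches the paper's Steps 2--4.

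The one place where your argument has a real gap is the monotonicity step via zero extension. Extending $(u_{R'},v_{1,R'},v_{2,R'})$ by zero to the larger domain $\O_R$ does \emph{not} produce a $W^{2,p}(\O_R)$ function: the normal derivative is nonzero (indeed strictly negative by Hopf) on the truncation boundary $\partial\O_{R'}\setminus(I_{R'}\times\{0\})$, so the distributional Laplacian of the extension has a nonzero singular part supported there, and the extension fails to be $W^{2,p}$ across that interface. Thus it is not an admissible test function in \eqref{fgpe} as defined. Moreover, even if one were to allow such weak supersolutions, the logic would be reversed: producing a nonnegative, nontrivial supersolution with $\lambda=\lambda_1^{R'}$ in the supremum characterization of $\lambda_1^R$ would give $\lambda_1^R\ge\lambda_1^{R'}$, not $\lambda_1^R\le\lambda_1^{R'}$; the statement you want is correct but this reasoning does not support it. The clean route — which you do sketch in the half-sentence ``any competitor $\ldots$ restricted suitably $\ldots$ forces $\lambda\le\lambda_1^R$'' — is restriction, not extension, combined with Proposition~\ref{varform}: $\lambda_1^R$ equals the supremum in \eqref{fgpe} taken over triplets in $W^{2,p}(I_R)\times(W^{2,p}(\O_R))^2$. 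For $R'>R$, restricting the eigentriplet of $\O_{R'}$ to the smaller domain gives a $W^{2,p}$ supersolution with $\lambda=\lambda_1^{R'}$, hence $\lambda_1^R\ge\lambda_1^{R'}$; restricting any admissible triplet on the unbounded domain likewise yields $\lambda_1^R\ge\lambda_1$. This is how the paper proves Step~1, and it avoids the zero-extension pitfall entirely.

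A final small remark: when you invoke the Harnack inequality for the normalized triplets you should note (as the paper implicitly does) that its proof is local, so it applies to solutions defined only on $\O_R$ as soon as $R$ is a couple of units larger than the compact set under consideration, even though Theorem~\ref{Harnack} is stated for solutions on the whole half-plane.
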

Theorems \ref{thm:gpef},\ref{thm:truncated} are proved simultaneously.
The key tool that allows us to pass to the limit is
a Harnack inequality especially devised for road-field models. Here is the precise statement.
\begin{theorem}\label{Harnack}
Let $f\in L^\infty(\R)$ and $g_1 , g_2\in L^\infty(\R\times\R^+)$.
For any $R>0$, there is a positive constant $C$ such that, for any non-negative solution $(u,v_1, v_2) \in W^{2,p}_{loc}(\R)\times (W_{loc}^{2,p}(\R \times [0,+\infty)))^2$ of 
\begin{equation}\label{lin2}
\left\{
\begin{array}{rlll}
-Du^{\prime \prime} -c u^{\prime } -f(x) u&= \sum_i \nu_i \rest{v_i}{y=0}-  (\mu_1 + \mu_2) u, \quad &x \in \mathbb{R}, \\
-d_i\Delta v_i - c_i \partial_{x}v_i - g_i(x,y)v_i&= 0, \quad &(x,y) \in \mathbb{R}\times \mathbb{R}^{+}, \\
-d_i\partial_{y}\rest{v_i}{y=0} &= \mu_i u- \nu_i \rest{v_i}{y=0}, \quad &x \in \mathbb{R},
\end{array}
\right.
\end{equation}
there holds:
$$
\max\left\{ \sup_{I_R }u   ,   \sup_{\Omega_{R}}v_1,  \sup_{\Omega_{R}}v_2 \right\} \leq 
C\min\left\{\inf_{I_{R}}u  ,  \inf_{\Omega_{R}}v_1,  \inf_{\Omega_{R}}v_2 \right\}.
$$
\end{theorem}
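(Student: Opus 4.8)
The strategy is to combine the classical interior and boundary Harnack inequalities for the scalar equations satisfied by $u$, $v_1$, $v_2$ separately with the coupling conditions on the line $\R\times\{0\}$, which transfer pointwise control from one component to another. First I would fix $R>0$ and enlarge to $R'=R+1$ (or any fixed larger radius), and work on $\Omega_{R'}$, $I_{R'}$; all the scalar Harnack inequalities below will be applied on $\Omega_R$, $I_R$ compactly contained in the larger sets, so that the constants depend only on $R$, the dimension, and the $L^\infty$ bounds on $f$, $g_1$, $g_2$ and the fixed coefficients $D,d_i,c,c_i,\mu_i,\nu_i$. Since each $v_i\ge 0$ solves a uniformly elliptic equation $-d_i\Delta v_i - c_i\partial_x v_i - g_i v_i = 0$ in $\R\times\R^+$, the interior Harnack inequality (e.g.\ \cite[Theorem 8.20 or 9.22]{GT}) gives, for any compact subset of the open upper half-plane, a constant controlling $\sup$ by $\inf$; and $u\ge 0$ solves a uniformly elliptic ODE on $\R$ so the one-dimensional Harnack inequality applies on $I_R$ likewise. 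What remains, and this is the crux, is to (a) control $v_i$ near the boundary line $\{y=0\}$, including the trace $\rest{v_i}{y=0}$, and (b) link the three scalar Harnack constants into a single one via the coupling.

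For step (a), near a boundary point $(x_0,0)$ the function $v_i$ satisfies an oblique/Robin-type condition $-d_i\partial_y \rest{v_i}{y=0} = \mu_i u - \nu_i \rest{v_i}{y=0}$. I would treat this by a reflection or a direct boundary-Harnack argument: in a half-ball $B_\rho(x_0,0)\cap\{y>0\}$, the nonnegative solution $v_i$ of an elliptic equation with a Robin condition whose right-hand side is $\mu_i u\ge 0$ minus a zeroth-order term satisfies a boundary Harnack inequality — one compares $v_i$ with the solution $w$ of the same equation with homogeneous Robin data (for which the boundary Harnack / Hopf-type estimate is standard, cf.\ \cite[Chapter 9]{GT} and the boundary weak Harnack), and the difference $v_i-w\ge 0$ is handled via a barrier using $\|u\|_{L^\infty(I_{R'})}$, which by the ODE Harnack for $u$ is itself $\le C\inf_{I_{R'}}u$. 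This yields, for each $i$, a constant $C_i$ with
$$
\sup_{\Omega_R} v_i \le C_i\Big(\inf_{\Omega_R} v_i + \inf_{I_{R'}} u\Big),
\qquad
\sup_{I_R}\rest{v_i}{y=0} \le C_i\Big(\inf_{\Omega_R} v_i + \inf_{I_{R'}} u\Big),
$$
and similarly a lower bound $\inf_{I_R}\rest{v_i}{y=0}\ge c_i^{-1}\inf_{\Omega_R}v_i$ from the interior Harnack applied just above the line together with the boundary regularity.

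For step (b), I would close the loop using the $u$-equation and the exchange terms. The equation for $u$ reads $-Du'' - cu' - f u + (\mu_1+\mu_2)u = \sum_i \nu_i \rest{v_i}{y=0}$, whose right-hand side is nonnegative; comparing with the homogeneous problem and using the (nonnegative) source $\sum_i\nu_i\rest{v_i}{y=0}$, a Harnack/Green-function estimate on $I_{R'}$ gives $\sup_{I_R}u \le C\big(\inf_{I_R}u + \sum_i\sup_{I_{R'}}\rest{v_i}{y=0}\big)$ and, more importantly for the coupling, a lower bound $\inf_{I_R}u \ge c^{-1}\sum_i \inf_{I_{R'}}\rest{v_i}{y=0} \ge c^{-1}\sum_i \inf_{\Omega_{R'}} v_i$ coming from the fact that $u$ is pushed up by the boundary flux. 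Symmetrically, the Robin condition shows $\inf_{\Omega_R}v_i$ is bounded below in terms of $\inf_{I_{R'}}u$ by a Hopf-type lower bound (if $v_i$ vanished to high order at an interior boundary point, the Robin relation would force $u$ small there). Chaining these inequalities — each $\sup$ bounded by a sum of $\inf$'s, each $\inf$ of one component bounded below by the $\inf$ of any other — yields a finite collection of linear inequalities among the six quantities $\sup_{I_R}u,\sup_{\Omega_R}v_i,\inf_{I_R}u,\inf_{\Omega_R}v_i$ from which the claimed two-sided bound follows by elementary elimination, with $C$ depending only on $R$, $\|f\|_\infty$, $\|g_i\|_\infty$ and the structural constants.

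\textbf{Main obstacle.} The delicate point is step (a): obtaining a genuine boundary Harnack inequality up to the line $\{y=0\}$ for the Robin-type boundary condition with a nonnegative, non-controlled source $\mu_i u$, and in particular controlling the trace $\rest{v_i}{y=0}$ both above and below by interior quantities. The subtlety is that the domain $\Omega_{R'}$ is only Lipschitz (the corner where $\partial B_{R'}$ meets the line), so one cannot naively invoke smooth-boundary Schauder estimates near the corner; I expect to handle this by working on half-balls centered on the open segment $I_{R'}\times\{0\}$, staying away from the corner by the choice $R'>R$, together with the even reflection across $\{y=0\}$ that turns the homogeneous-Robin part into an interior equation with bounded (reflected) coefficients, reducing the boundary estimate to an interior Harnack inequality for the reflected function. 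Getting the lower bound on $\rest{v_i}{y=0}$ — equivalently, a Hopf-type statement transferring smallness of the interior infimum of $v_i$ to smallness of its trace — is the part that must be done carefully, and it is exactly what forces the use of a barrier adapted to the Robin condition rather than a black-box citation.
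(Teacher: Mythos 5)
Your decomposition into (a) per-component Harnack estimates and (b) a chaining argument through the exchange conditions captures the right geometry, and the reflection idea for $v_i$ near $\{y=0\}$ is essentially the device the paper uses. However, there is a genuine circularity in step (a) that your plan does not resolve. You propose to control $\sup_{\Omega_R} v_i$ by $\inf_{\Omega_R} v_i$ plus a barrier term proportional to $\|u\|_{L^\infty(I_{R'})}$, and then to replace $\|u\|_{L^\infty(I_{R'})}$ by $C\inf_{I_{R'}} u$ invoking ``the ODE Harnack for $u$''. But $u$ is not a solution of a homogeneous scalar ODE: it satisfies $-Du''-cu'-(f-\mu_1-\mu_2)u=\sum_i\nu_i\rest{v_i}{y=0}$ with an uncontrolled, merely nonnegative right-hand side. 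Dropping that nonnegative source makes $u$ a \emph{supersolution}, which yields only the \emph{weak} Harnack inequality $\|u\|_{L^p(I_{R'})}\le C\inf_{I_R}u$; the full bound $\sup u\le C\inf u$ would require controlling $\sum_i\nu_i\rest{v_i}{y=0}$ in $L^p$, i.e.\ precisely the quantity you are trying to bound. The upper bounds for $u$ and for $\rest{v_i}{y=0}$ are thus mutually dependent, and your chain of linear inequalities does not close (formally you obtain $\sup u\le C\inf u + C^2(\inf v_i+\sup u)$, which absorbs only if $C^2<1$, which one cannot arrange).

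This is exactly the difficulty the paper's proof is designed to overcome, and it does so by a route you have not anticipated: a Moser iteration performed on the coupled system as a whole rather than per component. After the normalization $\tilde u=(\mu_1+\mu_2)u$, $\tilde v_i=\frac{\mu_1+\mu_2}{\mu_i}\nu_i v_i$, the system is tested against $(\phi,\psi_i)=(u^\beta\eta^2,v_i^\beta\eta^2)$ for $\beta>0$, and the coupling contribution from the boundary integrates to $\int_{I_{R+1}}(u-\rest{v_i}{y=0})(\rest{v_i}{y=0}^\beta-u^\beta)\eta^2\le 0$. This favorable sign (a monotonicity structure specific to the exchange condition) is what lets the iteration close, giving a joint local maximum principle (Lemma~\ref{LMP}) with no circularity. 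Combined with the decoupled weak Harnack inequality for supersolutions (Lemma~\ref{WH}, where your reflection idea does work, after the conjugation $w=v_ie^{-\nu_i y/d_i}$ that converts the Robin condition into a sign condition on the flux -- note that a bare even reflection of a Robin solution is not a solution of an interior equation), one obtains Proposition~\ref{Prop Harnack}, i.e.\ $\max\{\sup\}\le C\max\{\inf\}$. The remaining upgrade to $\max\{\sup\}\le C\min\{\inf\}$ (Proposition~\ref{min max}) is obtained in the paper not by a barrier/Green-function computation as you suggest, but by a compactness argument combined with the strong maximum principle. Your barrier sketch for this part is plausible in spirit, but it too rests on quantitative positivity of a Green function for $-D\partial_x^2-c\partial_x+(\mu_1+\mu_2-f)$ with Dirichlet data on $I_{R'}$, which is not automatic for arbitrary bounded $f$; the paper's qualitative compactness argument sidesteps this. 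You should identify the sign of the coupling term as the essential new ingredient and rebuild the upper-bound part of your argument around it.
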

Observe that, though our system is, in some sense, ``cooperative", the non-standard coupling through the boundary conditions makes the Harnack inequalities for cooperative systems inapplicable here. 

The Harnack inequality also allows us to 
derive some properties of the generalized principal eigenvalue that will be fundamental in some applications, concerning the continuity and monotonicity with respect to the coefficients. As we already mentioned, the sign of $\l$ characterizes the long-time behavior of solutions of the associated semilinear evolution problem. Therefore, knowing the behavior of $\l$ as a function of the parameters yields qualitative results and allows for a discussion of persistence/extinction in terms of the various parameters, see \cite{BDR2}.

We start by a result on the dependence of the generalized principal eigenvalue with respect to the parameters.

\begin{prop}\label{continuity}
The generalized principal eigenvalue $\l$ for system~\eqref{lin} is a locally Lipschitz-continuous function of the parameters $c, c_i\in\R$, $D,d_i\in\R^+$ and of the functions $a_i\in W^{1,\infty}(\R\times \R^+)$ endowed with the
	$L^\infty (\R \times \R^+)$ norm.
\end{prop}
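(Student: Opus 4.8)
The natural strategy is to exploit the variational/sup-characterization \eqref{fgpe} of $\l$ directly, without passing through the eigenfunction. The point is that $\l$ is defined as a supremum over a set of admissible test triplets, and that set depends monotonically and "continuously" on the coefficients. More precisely, I would first fix two sets of parameters, say $(c,c_i,D,d_i,a_i)$ and $(\tilde c,\tilde c_i,\tilde D,\tilde d_i,\tilde a_i)$, both ranging in a fixed compact neighbourhood $\mathcal{K}$ of a given parameter point, and show that $|\l-\tilde\l|\le C_{\mathcal K}\,\delta$, where $\delta$ measures the distance between the two parameter sets (Euclidean distance for $c,c_i,D,d_i$ and $L^\infty$ distance for the $a_i$). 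By symmetry it suffices to prove the one-sided bound $\l\le\tilde\l+C_{\mathcal K}\,\delta$.

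To get this bound, take $\lambda<\l$ and an admissible triplet $(u,v_1,v_2)\ge 0$, $\not\equiv 0$, realizing the inequalities $L_0(u,v_1,v_2)+\lambda u\le 0$, $L_i(v_i)+\lambda v_i\le 0$, $B_i(u,v_i)\le 0$. The plan is to show that the \emph{same} triplet is admissible for the tilde-system with eigenvalue parameter $\lambda-C_{\mathcal K}\delta$, which immediately gives $\tilde\l\ge\lambda-C_{\mathcal K}\delta$ and hence, letting $\lambda\nearrow\l$, the desired estimate. For this I need the triplet to be regular enough to control the perturbation terms. Here the key ingredient is the Harnack inequality (Theorem \ref{Harnack}) together with interior elliptic estimates: from Theorem \ref{Harnack} applied on successive balls, a nonnegative solution (or, with minor modifications, a nonnegative supersolution — one has to be a little careful here, see below) has $u,v_i$ locally bounded above and below away from zero, and then $W^{2,p}_{loc}$ estimates bound $u'',u',v_i,\nabla v_i,\Delta v_i$ locally in terms of $\sup$ of $u,v_i$. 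Rewriting, e.g., $\tilde L_i(v_i)+(\lambda-C\delta)v_i = L_i(v_i)+\lambda v_i + (\tilde d_i-d_i)\Delta v_i+(\tilde c_i-c_i)\partial_x v_i+(\tilde a_i-a_i)v_i - C\delta v_i$, the first term is $\le 0$ by assumption and the remaining terms are bounded in absolute value by $(\text{const})\cdot\delta\cdot(\text{local pointwise bound on }|\Delta v_i|,|\partial_xv_i|,v_i)$, which by the Harnack-plus-elliptic-estimates argument is $\le (\text{const})\cdot\delta\cdot v_i$ pointwise; choosing $C_{\mathcal K}$ larger than this constant makes the whole expression $\le 0$. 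The same computation handles $L_0$ (note $|\nu_i|,|\mu_i|$ are not among the perturbed parameters here, so only $D,c$ and the coupling through $v_i|_{y=0}$ enter — and the boundary trace $v_i|_{y=0}$ is again controlled by Harnack) and the boundary operators $B_i$, where $(\tilde d_i-d_i)\partial_y v_i|_{y=0}$ is estimated by boundary elliptic estimates plus Harnack.

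The main obstacle I anticipate is precisely the regularity/positivity input: Theorem \ref{Harnack} as stated applies to nonnegative \emph{solutions} of \eqref{lin2}, whereas the test triplets in \eqref{fgpe} are only \emph{supersolutions} of a system with a spectral shift. There are two ways around this. One is to first establish (as the paper surely does en route to Theorems \ref{thm:gpef}–\ref{thm:truncated}) that $\l$ is \emph{attained}, i.e. equals the sup over genuine positive eigenfunctions, so that it is enough to perturb an actual principal eigenfunction triplet $(u,v_1,v_2)$ solving the system with $\lambda=\l$ (then $f=a_1$ plays the role of the coefficient, etc., and Theorem \ref{Harnack} applies verbatim); this is the cleanest route and I would take it. Alternatively, one proves a Harnack-type inequality for nonnegative supersolutions of the road-field system, which follows by the same reflection-and-covering argument but is more work. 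A second, minor technical point: one should take $\lambda$ close to $\l$ from below so that the shifted parameter $\lambda-C_{\mathcal K}\delta$ still lies in the range where the argument is uniform, and check that the constant $C_{\mathcal K}$ from the elliptic/Harnack estimates can indeed be chosen uniformly over the compact parameter set $\mathcal K$ — which it can, since ellipticity constants, the $L^\infty$ bounds on coefficients, and the radii in Theorem \ref{Harnack} all vary continuously (and are bounded) over $\mathcal K$. Local Lipschitz continuity, rather than mere continuity, drops out automatically because the estimate above is linear in $\delta$.
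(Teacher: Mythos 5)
Your overall strategy matches the paper's: rather than perturbing an arbitrary admissible supersolution, use the generalized principal eigenfunction triplet $(u,v_1,v_2)$ provided by Theorem~\ref{thm:gpef}, exploit the Harnack inequality (Theorem~\ref{Harnack}) together with interior and boundary elliptic/Schauder estimates to obtain pointwise bounds of the form $|\Delta v_i|\le C v_i$, $|\partial_x v_i|\le C v_i$, $|\partial_y v_i\vert_{y=0}|\le C u$ uniformly over a compact parameter range, and then insert the resulting triplet as a test function in~\eqref{fgpe} for the perturbed operator with eigenvalue parameter shifted by $-C\delta$. Your ``cleanest route'' observation — that one should work with the genuine eigenfunction rather than struggle with supersolutions — is exactly what the paper does.

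However, there is a real gap in the treatment of the boundary operators $B_i$, which you dispatch with ``the same computation handles\ldots the boundary operators.'' It does not. For the interior operators $L_0$, $L_i$ the spectral parameter $\lambda$ appears explicitly, so the shift $\lambda\mapsto\lambda-C\delta$ supplies a term $-C\delta u$ (resp.\ $-C\delta v_i$) that absorbs the perturbation. The constraint $B_i(u,v_i)\le 0$ contains no spectral parameter. Since $(u,v_1,v_2)$ is an eigenfunction, $B_i(u,v_i)=0$ exactly, and after replacing $d_i$ by $d_i'$ one gets $B_i'(u,v_i)=(d_i'-d_i)\partial_y v_i\vert_{y=0}$, which you can bound in absolute value by $C\delta\, u$ but which may be strictly positive — and there is no slack to absorb it. The paper closes this by modifying the test triplet: it replaces $u$ by $\tilde u:=\frac{\mu_1-C|d_1-d_1'|}{\mu_1}u$, so that the (say) $i=1$ boundary inequality $d_1'\partial_y v_1\vert_{y=0}+\mu_1\tilde u-\nu_1 v_1\vert_{y=0}\le 0$ is restored; one then rechecks the first equation for $(\tilde u,v_1,v_2)$, where the resulting discrepancy $\nu_i\frac{C\delta}{\mu_1}v_i\vert_{y=0}$ is again controlled via Harnack by $C'\delta\,\tilde u$ and absorbed into the shift $\lambda_1\mapsto\lambda_1-C'\delta$. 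Adding this rescaling step is what makes the argument close; without it, the boundary inequality can fail.
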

%
%
%
%
%
We mention that, in the setting we consider, even if the heterogeneous coefficients were smooth, the Kato regularity 
theory for eigenvalues, c.f.~\cite{Kato}, does not apply because the domains we consider are unbounded and because of the unusual coupling.

Next, we derive some monotonicity properties for the generalized principal eigenvalue in the case $c=c_i=0$, which are strict if $\lambda_{1}$ satisfies the following conditions:
\begin{equation}\label{condition strict}
\lim_{R \to +\infty}\bigg( \sup_{ \substack{(x,y) \in \ \R\times\R^+ \setminus \Omega_{R} \\ i \in \{ 1,2\} }} a_i(x,y)\bigg) <- \l \quad \text{ and }\quad \lambda_{1} \leq 0.
\end{equation}
%
%

\begin{prop}\label{monotony}
Assume that $c=c_i=0$. Then the generalized principal eigenvalue~$\l$ for the system \eqref{lin}
is a non-decreasing function of the parameters $d_i$ and $D$ and a non-increasing function of the couple $(a_1,a_2) \in \left(W^{1,\infty}(\R\times \R^+)\right)^2$, endowed with the standard product order.
%
%

Moreover, these monotonicities are
strict if condition \eqref{condition strict} holds.

\end{prop}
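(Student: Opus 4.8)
The plan is to exploit the variational/sub-solution characterization \eqref{fgpe} of $\l$ together with the existence of a generalized principal eigenfunction triplet from Theorem \ref{thm:gpef}. For the non-strict monotonicities, the key observation is that in the case $c=c_i=0$, if $(u,v_1,v_2)$ is a generalized principal eigenfunction triplet associated to the parameters $(D,d_1,d_2,a_1,a_2)$, then it is automatically an admissible test triplet in the definition \eqref{fgpe} of the generalized principal eigenvalue $\tilde\l$ associated to perturbed parameters $(\tilde D,\tilde d_1,\tilde d_2,\tilde a_1,\tilde a_2)$ with $\tilde d_i\le d_i$, $\tilde D\le D$ and $\tilde a_i\ge a_i$, provided we can show that the relevant differential inequalities hold with $\lambda$ replaced by $\l$ for the perturbed operators. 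Concretely, for the field equations one writes $\tilde d_i\Delta v_i+\tilde a_i v_i+\l v_i=(\tilde d_i-d_i)\Delta v_i+(\tilde a_i-a_i)v_i+(d_i\Delta v_i+a_iv_i+\l v_i)$; the last bracket is $0$, the term $(\tilde a_i-a_i)v_i\ge0$ points the wrong way, so one must instead increase $a_i$, i.e.\ one tests against the eigenfunction of the problem with the \emph{smaller} $a_i$. I would therefore set things up so that the eigenfunction of the problem with parameters giving the smaller eigenvalue is used as a test function for the larger one, and check sign by sign that $L_0,L_i,B_i$ inequalities are preserved. The only genuinely nontrivial sign is $(\tilde d_i-d_i)\Delta v_i$: since the field equation gives $d_i\Delta v_i=-(a_i+\l)v_i$, decreasing $d_i$ to $\tilde d_i$ changes $\tilde d_i\Delta v_i=\frac{\tilde d_i}{d_i}(-(a_i+\l)v_i)$, which has a definite sign only once one knows the sign of $a_i+\l$; this is where one does \emph{not} in general have pointwise control, and this is the main obstacle (see below). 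For $D$, the same issue arises on the road through $Du''$ and the term $(\mu_1+\mu_2)u$. The clean way around this is to use instead the approximating problems: by Theorem \ref{thm:truncated}, $\l=\lim_{R\to\infty}\l^R$, and $\l^R$ is a genuine Krein--Rutman principal eigenvalue on a bounded domain, for which monotonicity with respect to $d_i,D$ and $a_i$ follows from the classical variational argument (test the Rayleigh quotient, or use that decreasing the diffusion and increasing the zeroth-order coefficient makes the operator ``more unstable''), combined with the strong positivity established in the Appendix. Passing to the limit $R\to\infty$ in each inequality $\l^R\le\tilde\l^R$ then yields $\l\le\tilde\l$.

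For the strict monotonicity under condition \eqref{condition strict}, the plan is to argue by contradiction: suppose $\l=\tilde\l$ while, say, $\tilde d_i<d_i$ for some $i$ (the cases of $D$ and of $a_i$ being analogous). Let $(u,v_1,v_2)$ be the generalized principal eigenfunction triplet for the larger eigenvalue problem, which under the assumption is a non-negative sub-solution (in the sense of \eqref{fgpe}) for the smaller one with the \emph{same} $\lambda=\l=\tilde\l$. Using the Harnack inequality of Theorem \ref{Harnack} one first upgrades positivity: $u>0$ on $\R$ and $v_i>0$ on $\R\times[0,+\infty)$. The next step is a strong-maximum-principle / Hopf-type argument for the road-field system: if the sub-solution inequalities are equalities ``in the limit'', then $(u,v_1,v_2)$ must actually solve the perturbed eigenproblem, which forces $(\tilde d_i-d_i)\Delta v_i\equiv 0$, i.e.\ $\Delta v_i\equiv0$, hence $v_i$ harmonic and bounded on $\R\times\R^+$, and together with the eigen-relation $d_i\Delta v_i+a_iv_i+\l v_i=0$ this gives $(a_i+\l)v_i\equiv0$, so $a_i\equiv-\l$ on the support of $v_i$, i.e.\ everywhere since $v_i>0$. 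This contradicts the first part of \eqref{condition strict}, which says $a_i$ drops strictly below $-\l$ near infinity. The role of the second part $\l\le0$ of \eqref{condition strict} is to guarantee, via the decay estimates for eigenfunctions that must be proved en route (a Liouville/decay lemma at infinity, itself resting on the Harnack inequality and the sign condition on $\limsup a_i$), that the eigenfunction triplet genuinely decays, so that the comparison and the strong maximum principle can be applied globally rather than only locally; without $\l\le0$ one cannot exclude that the sub-solution and the true eigenfunction differ only by an unbounded multiplicative factor.

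The technical heart, and the step I expect to be the main obstacle, is making rigorous the passage from ``$(u,v_1,v_2)$ is a sub-solution for the perturbed problem with eigenvalue $\l$, and the perturbed eigenvalue equals $\l$'' to ``$(u,v_1,v_2)$ is an eigenfunction for the perturbed problem''. In the scalar, bounded-domain setting this is immediate from Krein--Rutman (a sub-solution with the principal eigenvalue must be the eigenfunction); here one has an unbounded domain and a system coupled through the boundary, so one must either (i) run the whole argument on the truncated domains $\O_R, I_R$, where $\l^R=\tilde\l^R$ cannot hold for fixed large $R$ unless the coefficients already coincide there — giving a contradiction directly once one knows $a_i<-\l$ on $\R\times\R^+\setminus\O_R$ — or (ii) establish a global strong maximum principle and Hopf lemma for the limiting road-field operator, using the Harnack inequality of Theorem \ref{Harnack} to control the quotient of two solutions. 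I would pursue route (i): fix $R$ so large that $\sup_{(\R\times\R^+)\setminus\O_R,\,i}a_i<-\l$ (possible by \eqref{condition strict}); use $\l^R\searrow\l$ and $\tilde\l^R\searrow\tilde\l=\l$ together with strict monotonicity of the bounded-domain principal eigenvalue — which does hold strictly, by the classical argument, as soon as the perturbation is non-trivial \emph{inside} $\O_R$ — to conclude; the remaining point is that the perturbation ($d_i\to\tilde d_i$, or $a_i\to\tilde a_i$) is indeed non-trivial inside $\O_R$, which for the diffusion coefficients is automatic and for $a_i$ is where one may need the hypothesis that the two coefficient functions are not equal a.e. Carefully tracking where strictness can fail, and checking that \eqref{condition strict} is exactly what rules out those degenerate cases, is the delicate bookkeeping that the proof must carry out.
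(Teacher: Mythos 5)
Your plan for the \emph{non-strict} monotonicities is correct and matches the paper: a direct test in \eqref{fgpe} fails because the sign of $(\tilde d_i-d_i)\Delta v_i$ (equivalently of $a_i+\lambda_1$) is not controlled pointwise, so one passes through the truncated Rayleigh--Ritz formula of Proposition~\ref{variational} and then lets $R\to\infty$ via Theorem~\ref{thm:truncated}. That is exactly the paper's argument.

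For the \emph{strict} monotonicity, however, there is a genuine gap in your route~(i) and your route~(ii) is not what the paper does and would be hard to close. The flaw in route~(i) is that strict inequality at each finite $R$ does not survive the limit: knowing $\lambda_1^R<\tilde\lambda_1^R$ strictly for every large $R$, with both sequences decreasing to limits, yields only $\lambda_1\le\tilde\lambda_1$ — precisely the non-strict inequality you already have. There is no contradiction with $\lambda_1=\tilde\lambda_1$ unless the gap $\tilde\lambda_1^R-\lambda_1^R$ is bounded away from zero \emph{uniformly in $R$}, and producing that uniform lower bound is the whole content of the strict part. Route~(ii) relies on an inference (``a positive sub-solution with the right eigenvalue must be an eigenfunction, hence rigidity'') that is false in unbounded domains — as the paper itself observes, for every $\lambda<\lambda_1$ there are positive solutions of the system with eigenvalue $\lambda$ — and a global strong maximum principle or Hopf lemma for this coupled system on $\R\times\R^+$ is not established and would itself face the very sign obstruction you noted.

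What the paper actually does is quantitative. Assuming $\lambda_1=\tilde\lambda_1$ for contradiction, the truncated Rayleigh quotient computation shows that the normalized gap term
\[
\frac{\sum_{k}\frac{\nu_k}{\mu_k}\int_{\O_R}(a_k-\tilde a_k)\,v_{k,R}^2}{\int_{I_R}u_R^2+\sum_{k}\frac{\nu_k}{\mu_k}\int_{\O_R}v_{k,R}^2}\;\longrightarrow\;0\qquad(R\to\infty).
\]
The denominator is bounded uniformly in $R$ thanks to the exponential decay of the truncated eigenfunctions (Lemma~\ref{exponential estimate}, which is exactly where \eqref{condition strict}, and in particular $\lambda_1\le0$, enters), combined with a fixed normalization at the origin and the Harnack inequality of Theorem~\ref{Harnack}. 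Harnack and the normalization also give, for each fixed compact $\O_{\bar R}$, a positive lower bound on $v_{k,R}$ uniform in $R$, whence $\int_{\O_{\bar R}}(a_k-\tilde a_k)\to0$, i.e., $a_k=\tilde a_k$ a.e.\ on $\O_{\bar R}$, a contradiction. You correctly anticipated that an exponential decay lemma would be needed and that \eqref{condition strict} is what makes it available — that instinct is right — but the decay feeds into a Rayleigh-gap estimate, not into a Liouville or rigidity argument, and that is the missing idea.
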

The above monotonicity properties in the large sense will be readily derived from a variational formula for $\l^R$, 
of the Rayleigh-Ritz type which we state in Proposition~\ref{variational} below. The proof of the strict monotonicity is much more involved.
 This property is useful in some applications, see \cite{BDR2}, 
 and, as far as we are aware, it is new even for a single~equation.

We point out that the generalized principal 
eigenvalue is not, in general, strictly monotonous with respect to the coefficients, 
even in the case of a single equation, due to the unboundedness of the domain.
As an example, consider the operator
$$
L u := \Delta u + a(x)u\quad\text{in }\R^N.
$$
If $a\equiv 0$ then the generalized principal eigenvalue of $-L$ is $0$. 
However, if $a\leq 0$ is such that $a(x) \to 0$ as $x$ goes to $\pm \infty$, it can be 
easily seen that the generalized principal eigenvalue of $-L$ is still $0$. Observe that condition \eqref{condition strict} is not fulfilled in this example. The validity of the last statement of Proposition \ref{monotony}
relies indeed on the fact that~\eqref{condition strict} entails the exponential decay of the generalized \pf s triple. 
Owing to the Rayleigh-Ritz type formula, this allows us to derive the strict monotonicity.
 
Lastly, it is necessary to assume that $c=c_i=0$, otherwise it might happen that
the \pe\ is not monotonous with respect to the parameters, as exhibited in \cite{BDR2}. 

\medskip
The paper is organized as follows. In Section \ref{SectionHarnack}, we prove the Harnack inequality for system \eqref{lin}, Theorem \ref{Harnack}. It will be the keystone for the next Section \ref{SectionLinearized}, where we study properties of the generalized principal eigenvalue $\l$. We start with showing that the definition \eqref{fgpe} of $\l$ does provide a real number, satisfying some explicit bounds.
 Next, in Section \ref{section omega}, we focus on the case of the bounded domains $\O_R$. In particular, we provide a variational characterization of $\l^R$, similar to the classical Rayleigh-Ritz formula. Then, we devote Section \ref{cv vp} to proving Theorems 
 \ref{thm:gpef} and~\ref{thm:truncated}. 
 We will follow a standard strategy consisting in constructing a triplet $(u,v_1,v_2)$ that achieves the maximum in the formula \eqref{fgpe}. This triplet will be obtained as the limit of a subsequence of the principal eigenfunctions
$((u_R,v_{1,R},v_{2,R}))_{R>0}$ of \eqref{eigborn}, suitably normalized. We then prove Propositions \ref{continuity} and \ref{monotony} in Section \ref{m et c}.

%

\section{The Harnack inequality}\label{SectionHarnack}

This section is dedicated to proving the Harnack inequality, Theorem \ref{Harnack}. We start with a weaker version.
\begin{prop}\label{Prop Harnack}
For any $R>0$, there is a positive constant $C$ such that, for any non-negative solution $(u,v_1,v_2)$ of \eqref{lin2}, there holds
$$
\max\left\{  \sup_{I_{R} }u ,\  \sup_{\Omega_{R}}v_1 ,\ \sup_{\Omega_{R}}v_2 \right\} \leq 
C\max\left\{  \inf_{I_{R}}u  , \  \inf_{\Omega_{R}}v_1 , \  \inf_{\Omega_{R}}v_2   \right\}.
$$
\end{prop}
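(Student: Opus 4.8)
The plan is to reduce the coupled road--field system to a finite collection of \emph{classical} Harnack inequalities, applied separately to $u$ on the line and to each $v_i$ on the half-plane, and then to ``stitch'' these local estimates together using the boundary conditions, which are the only mechanism by which the three quantities see each other. The subtlety, and the reason the statement is weaker than Theorem~\ref{Harnack} (a $\max$ on the right-hand side rather than a $\min$), is that the Robin-type exchange conditions $B_i(u,v_i)=0$ couple only boundary traces and normal derivatives, so a priori one only gets one-sided control in each ``direction'' of the coupling; upgrading $\max$ to $\min$ will be the job of Theorem~\ref{Harnack} and is not attempted here.

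First I would fix $R>0$ and work on a slightly larger scale, say on $I_{2R}$ and $\Omega_{2R}$ (or on balls of radius $2R$ centered at points of the line), so that the compact sets in the conclusion sit well inside the domains where the PDEs hold. For each $v_i$: away from the line, $v_i\ge 0$ solves a uniformly elliptic equation $-d_i\Delta v_i - c_i\partial_x v_i - g_i v_i = 0$ with bounded coefficients, so the interior Harnack inequality (\cite[Chapter~9]{GT} / \cite{GT} Theorem~8.20 together with the $L^p$ a~priori bounds justified by $p>2$ in the excerpt) gives a constant depending only on $R$, $d_i$, $c_i$, $\|g_i\|_\infty$ controlling $\sup v_i \le C \inf v_i$ over interior compacts. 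To reach the boundary $\{y=0\}$ I would use the reflection trick already alluded to in the paper: the equation for $v_i$ together with the \emph{oblique/Robin} boundary condition can be handled by a boundary Harnack inequality for the Robin problem, or more elementarily by extending $v_i$ across $\{y=0\}$ after subtracting a function that absorbs the inhomogeneous boundary data $\mu_i u - \nu_i v_i|_{y=0}$; either way one obtains $\sup_{\Omega_{2R}} v_i \le C_i\bigl(\inf_{\Omega_{R'}} v_i + \sup_{I_{2R}} u\bigr)$ for a suitable intermediate radius, the extra $\sup u$ term coming from the inhomogeneity in $B_i$. Symmetrically, for $u$: it solves a second-order ODE with the bounded zeroth-order coefficient $f - (\mu_1+\mu_2)$ and a nonnegative forcing term $\sum_i \nu_i v_i|_{y=0} \ge 0$, so the one-dimensional Harnack inequality for $-Du'' - cu' - (f-(\mu_1+\mu_2))u = (\text{nonneg.})$ gives $\sup_{I_{2R}} u \le C_0 \inf_{I_R} u$ directly (a supersolution of a nonnegative-right-hand-side equation is controlled from above by its infimum), and in particular $u$ satisfies an ordinary interior Harnack inequality.

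The core combinatorial step is then: chain these inequalities. From the ODE estimate, $\sup u$ and $\inf u$ on concentric intervals are comparable up to a constant. Feeding this into the boundary estimate for each $v_i$ gives $\sup v_i \le C(\inf v_i + \inf u)$. Conversely, evaluating the boundary condition $-d_i\partial_y v_i|_{y=0} = \mu_i u - \nu_i v_i|_{y=0}$ at an interior point and using the Hopf-type lower bound for the normal derivative of the nonnegative supersolution $v_i$ (or, more simply, using that the forcing $\sum_i\nu_i v_i|_{y=0}$ in the $u$-equation is bounded below by a constant times $\inf v_i$ via the already-established Harnack for $v_i$ near the line) yields a reverse bound $\inf_{I_R} u \ge c\,\inf_{\Omega_R} v_i$ or $\sup_{I_R}u \le C \sup_{\Omega_R}v_i$. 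Collecting all four (two for each $v_i$) one-directional comparisons and taking maxima/minima over $i$ closes the loop and produces a single constant $C=C(R,D,d_i,c,c_i,\nu_i,\mu_i,\|f\|_\infty,\|g_i\|_\infty)$ with $\max\{\sup_{I_R}u,\sup_{\Omega_R}v_1,\sup_{\Omega_R}v_2\} \le C\max\{\inf_{I_R}u,\inf_{\Omega_R}v_1,\inf_{\Omega_R}v_2\}$.

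The main obstacle I anticipate is the boundary behavior at the junction line $\{y=0\}$: the classical interior Harnack inequality is not enough, and one must control $v_i$ up to $\{y=0\}$ while its boundary data couples to $u$. The clean way is a boundary Harnack / Carleson-type estimate for the oblique-derivative (Robin) problem on the half-ball $B_{2R}^+$ with bounded coefficients, applied to the nonnegative supersolution $v_i$; the inhomogeneity $\mu_i u|_{I_{2R}}$ is nonnegative and bounded (by the ODE Harnack for $u$), so it can be treated as a controlled right-hand side, e.g.\ by writing $v_i = w_i + z_i$ with $z_i$ solving the same boundary-value problem with the $u$-data and zero interior equation (so $0\le z_i \le C\sup_{I_{2R}}u$ by the maximum principle) and $w_i\ge 0$ solving a homogeneous Robin problem to which the boundary Harnack applies. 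Making this decomposition and the attendant regularity bookkeeping rigorous in the nonsmooth corner geometry—precisely the point where the paper says it ``overcome[s] using a reflection argument''—is where the real work lies; everything else is an assembly of standard elliptic estimates.
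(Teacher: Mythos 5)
There is a genuine gap at the very place where your chain of scalar estimates is supposed to close. You assert that because
$$-Du''-cu'-(f-\mu_1-\mu_2)u=\textstyle\sum_i\nu_i \rest{v_i}{y=0}\ \ge\ 0,$$
the function $u$ satisfies the full two-sided Harnack inequality $\sup_{I_{2R}}u\leq C_0\inf_{I_R}u$ ``directly,'' on the grounds that ``a supersolution of a nonnegative-right-hand-side equation is controlled from above by its infimum.'' This is false. A nonnegative \emph{supersolution} of a scalar elliptic equation satisfies only the \emph{weak} Harnack inequality $\|u\|_{L^p(I_{2R})}\leq C\inf_{I_R}u$ (Theorem~8.18 in \cite{GT}); the estimate $\sup u\leq C\inf u$ requires a genuine two-sided equation, i.e.\ quantitative control of the right-hand side from above. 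A one-dimensional counterexample: take $g\geq 0$ a tall narrow spike near the origin, $u\geq 0$ solving $-u''=g$ on $(-2,2)$ with small boundary values; then $\sup u/\inf u$ is unbounded. In your setting the right-hand side is $\sum_i\nu_i\rest{v_i}{y=0}$, which is precisely the unknown you are trying to control, so treating it as harmless nonnegative forcing discards the coupling rather than exploiting it. Once the step ``$\sup u\sim\inf u$'' is removed, the subsequent chaining (feeding $\sup u$ into the boundary estimate for $v_i$, then closing the loop with $\inf u\gtrsim\inf v_i$) no longer terminates: each one-sided comparison trades an infimum of one component for a supremum of another, and without the missing two-sided estimate you cannot convert the resulting cycle into a single constant.

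The paper's proof avoids this circularity by splitting the conclusion into two pieces that \emph{can} be proved component-wise and jointly, respectively. First, a weak Harnack inequality (Lemma~\ref{WH}): each of $u,v_1,v_2$ is, by itself, a nonnegative supersolution of a scalar elliptic equation (for $u$ this uses $v_i\geq 0$; for $v_i$ one reflects across $\{y=0\}$ after the change of unknown $w=v_ie^{-\nu_i y/d_i}$, which turns the Robin condition into $\partial_yw\vert_{y=0}\leq 0$), so the classical weak Harnack gives $\|u\|_{L^p}\leq C\inf u$ and $\|v_i\|_{L^p}\leq C\inf v_i$. Second, a local maximum principle (Lemma~\ref{LMP}) controlling $\max\{\sup u,\sup v_1,\sup v_2\}$ by $\max\{\|u\|_{L^p},\|v_1\|_{L^p},\|v_2\|_{L^p}\}$. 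This second step is where the coupling is handled, and it cannot be done component-by-component: it is a joint Moser iteration with test functions $\phi=u^\beta\eta^2$, $\psi_i=v_i^\beta\eta^2$, chosen so that the cross term coming from $B_i$ becomes $\int(u-\rest{v_i}{y=0})(\rest{v_i}{y=0}^\beta-u^\beta)\eta^2\leq 0$ and can be discarded. Chaining the two lemmas yields exactly the $\max$--$\max$ estimate of the proposition. If you want to repair your argument along the lines you sketched, you would need to replace the false ``full Harnack for $u$'' with the correct $\sup u\leq C(\inf u+\|\rest{v_1}{y=0}\|_{L^p}+\|\rest{v_2}{y=0}\|_{L^p})$, at which point you are forced into the coupled iteration anyway; the component-wise boundary-Harnack/decomposition route does not close the loop on its own.
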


To prove this result, we follow a standard approach (see \cite[Theorem 8.20]{GT} for instance) consisting in deriving separately
a weak Harnack inequality for supersolutions and a local maximum principle for subsolutions. We will make use of the Moser iteration method, see \cite{M}. We say that a triples $(u,v_1,v_2)$ is a supersolution (respectively a subsolution) of \eqref{lin2} if it satisfies the system with the signs $=$ replaced by~$\geq$ (respectively $\leq$).
\begin{lemma}\label{WH}
For any $R>0$ and $p\geq1$, there is a positive constant $C$ such that, for any non-negative supersolution $(u,v_1,v_2)$ of \eqref{lin2}, there holds
$$
\| u\|_{L^{p}(I_{R+1})} \leq C \inf_{I_R }u \ \text{ and } \
\| v_i \|_{L^{p}( \Omega_{R+1})} \leq C\inf_{ \Omega_{R}}v_i , \ i=1,2.
$$
\end{lemma}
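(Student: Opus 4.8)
The plan is to prove the weak Harnack inequality (Lemma~\ref{WH}) by treating the three components separately but in a coupled way, exploiting the fact that the boundary conditions transfer positivity from one component to another. For the field components $v_i$, the strategy is the classical Moser iteration for supersolutions of uniformly elliptic equations: since $v_i$ satisfies $-d_i\Delta v_i - c_i\partial_x v_i - g_i v_i \geq 0$ in $\R\times\R^+$ with bounded coefficients, the interior weak Harnack inequality (\cite[Theorem 8.18]{GT}) gives $\|v_i\|_{L^p(K')} \leq C\inf_{K} v_i$ for compact sets $K\subset\subset K'$ contained in the open half-plane. The genuinely new point is to get this estimate \emph{up to the boundary} $y=0$, because the Robin-type condition $-d_i\partial_y v_i|_{y=0} = \mu_i u - \nu_i v_i|_{y=0}$ is non-homogeneous (it involves $u$). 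The natural device, consistent with the paper's announced ``reflection argument'', is to extend $v_i$ evenly across $\{y=0\}$; the reflected function is then a supersolution of an elliptic equation in a full neighborhood of any boundary point, but with a distributional term on $\{y=0\}$ coming from the jump in the normal derivative. That jump equals $2(\mu_i u - \nu_i v_i|_{y=0})/d_i \geq -2\nu_i v_i|_{y=0}/d_i$ (using $u\geq0$), so the reflected $v_i$ is in fact a genuine supersolution near the boundary of an equation with an extra nonnegative bounded zeroth-order coefficient, to which the interior weak Harnack inequality applies directly. This yields $\|v_i\|_{L^p(\Omega_{R+1})} \leq C\inf_{\Omega_R} v_i$.

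Next I would handle the road component $u$. It solves a one-dimensional ODE inequality $-Du'' - cu' - f u \geq \sum_i\nu_i v_i|_{y=0} - (\mu_1+\mu_2)u$ on $\R$. The right-hand side is nonnegative up to the term $-(\mu_1+\mu_2)u$, so rearranging, $u$ is a nonnegative supersolution of $-Du'' - cu' - (f - \mu_1-\mu_2)u \geq 0$ with bounded coefficients; the one-dimensional weak Harnack inequality (a special, elementary case of \cite[Theorem 8.18]{GT}, or directly via the maximum principle and Harnack for ODEs) gives $\|u\|_{L^p(I_{R+1})} \leq C\inf_{I_R} u$. So far the three estimates are decoupled — each component controls its own $L^p$ norm by its own infimum — which is exactly what Lemma~\ref{WH} asserts. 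The coupling through the boundary conditions is what will be used later, in Proposition~\ref{Prop Harnack}, to chain these estimates together with the local maximum principle for subsolutions; but at the level of this lemma no chaining is needed.

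The main obstacle I anticipate is the boundary regularity: making the even-reflection argument rigorous requires care because $\Omega_R$ is non-smooth (it has corners where $\partial B_R$ meets $\{y=0\}$) and because the solutions only live in $W^{2,p}_{loc}$. One has to check that the even extension of $v_i$ genuinely satisfies the extended differential inequality \emph{across} $\{y=0\}$ in the weak ($W^{1,2}$, or $W^{2,p}$) sense — this is where the boundary condition $-d_i\partial_y v_i|_{y=0} = \mu_i u - \nu_i v_i|_{y=0}$ enters as the compatibility condition that kills the would-be singular measure on $\{y=0\}$ up to a controllable nonnegative remainder. One also needs the coefficients $c_i\partial_x$ to behave under reflection (they do, since $\partial_x$ is tangential to $\{y=0\}$ and even reflection preserves it), whereas a normal drift would break the argument — but here there is none. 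A secondary technical point is that the balls $B_{R+1}$ may stick out of the region where the interior estimate is stated, so one works with slightly larger truncations and uses that the constants in \cite[Theorem 8.18]{GT} depend only on the ellipticity constants, the $L^\infty$ bounds on the coefficients, and the ratio of the radii, not on the solution. With these points addressed, the three one-sided estimates follow from the classical Moser iteration machinery and the lemma is proved.
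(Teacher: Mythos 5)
Your treatment of the road component $u$ is exactly the paper's, but your handling of the field components $v_i$ has a genuine gap. You propose to even-reflect $v_i$ directly across $\{y=0\}$ and to absorb the resulting jump in the normal derivative into ``an extra nonnegative bounded zeroth-order coefficient''. This does not work: the jump produces a singular term supported on $\{y=0\}$, i.e.\ a surface measure of the form $-2d_i\,\partial_y v_i|_{y=0}\,\delta_{y=0}$, which is not a bounded function and cannot be rewritten as $b(x,y)\tilde v_i$ with $b\in L^\infty$. A Dirac layer on a hyperplane is not admissible in \cite[Theorem 8.18]{GT}. Moreover, the sign of the jump is not favorable: the Robin condition gives $\partial_y v_i|_{y=0}\le(\nu_i v_i|_{y=0}-\mu_i u)/d_i$, which can be positive, so the reflected $\tilde v_i$ genuinely fails to be a weak supersolution of any elliptic equation with bounded coefficients. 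Trying to move the singular term to the right as $-2\nu_i v_i|_{y=0}\delta_{y=0}\le 0$ leaves the inequality pointing the wrong way.

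The key idea you are missing is the exponential change of variables $w := v_i\,e^{-\nu_i y/d_i}$, which the paper performs \emph{before} the reflection. This conjugation converts the Robin condition into the one-sided Neumann condition $-d_i\partial_y w|_{y=0}\ge\mu_i u\ge0$, so that the jump of the even reflection $\tilde w(x,y)=w(x,|y|)$ has the favorable sign and contributes a \emph{nonnegative} measure to $-\tilde L\tilde w$; this term can simply be discarded, leaving $\tilde w$ a bona fide weak supersolution of an operator with bounded (possibly discontinuous, e.g.\ $2\nu_i H(y)\partial_y$) coefficients, to which the interior weak Harnack inequality applies. This conjugation is essential, not a convenience. (Also, a secondary remark: the non-smoothness of $\Omega_R$ that you worry about is not an issue for this lemma — the supersolution lives on the whole half-plane $\R\times[0,+\infty)$ and the estimates you seek are interior ones on $\Omega_{R+1}\subset\R\times\R^+$, so no corners enter.)
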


\begin{lemma}\label{LMP}
For any $R>0$ and $p>1$, there is a positive constant $C$ such that, for any non-negative subsolution $(u,v_1,v_2)$ of \eqref{lin2}, there holds
$$
\max\left\{ \sup_{I_R}u, \  \sup_{ \Omega_{R}}v_1  ,\ \sup_{ \Omega_{R}}v_2 \right\} 
\leq C \max\{  \| u\|_{L^{p}( I_{R+1})}, \  \| v_1 \|_{L^{p}( \Omega_{R+1})}, \ \| v_2 \|_{L^{p}( \Omega_{R+1})}   \}.
$$
\end{lemma}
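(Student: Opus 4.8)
The strategy is the classical Moser iteration adapted to the coupled road--field system. First I would treat each of the three components essentially separately, but track carefully how the boundary conditions mix them; the point is that the coupling only enters through zeroth-order terms on the boundary, so standard local maximum principle estimates apply to each equation if one is willing to pay by adding the boundary trace of the neighbouring unknown to the right-hand side. Concretely, for the field components $v_i$, I would fix a smooth cut-off $\eta$ supported in $B_{R+1}$ with $\eta\equiv 1$ on $B_R$, multiply the (sub)equation $-d_i\Delta v_i-c_i\partial_x v_i-g_i v_i\le 0$ by $\eta^2 \bar v_i^{\,2\beta-1}$ with $\bar v_i:=v_i+k$ for a suitable constant $k>0$ (to be chosen proportional to $\|u\|_{L^p(I_{R+1})}$ so as to absorb the boundary inhomogeneity coming from $B_i$), integrate over $\Omega_{R+1}$, and integrate by parts. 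The boundary term on $I_{R+1}\times\{0\}$ is handled using the Robin-type condition $-d_i\partial_y v_i|_{y=0}=\mu_i u-\nu_i v_i|_{y=0}$: the $-\nu_i v_i|_{y=0}$ piece has a favourable sign and the $\mu_i u$ piece is controlled, after a trace inequality in $y$, by $\|u\|_{L^p}$ through the choice of $k$. This yields the Caccioppoli-type inequality $\|\nabla(\eta \bar v_i^{\,\beta})\|_{L^2}^2\le C\beta^2 \|\bar v_i^{\,\beta}\|_{L^2(B_{R+1})}^2$, and then Sobolev embedding in $\R^2$ (here one must use $W^{1,2}\hookrightarrow L^q$ for every finite $q$, or pass through $L^q$ with a fixed $q>2$) gives the reverse-Hölder inequality $\|\bar v_i\|_{L^{\chi p_j}(B_{r_j})}\le (C\beta)^{1/\beta}\|\bar v_i\|_{L^{p_j}(B_{r_{j-1}})}$ for a gain factor $\chi>1$.

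For the road component $u$, the same scheme applies in one dimension to $-Du''-cu'-fu\le \sum_i\nu_i v_i|_{y=0}-(\mu_1+\mu_2)u$: test with $\eta^2\bar u^{\,2\beta-1}$, $\bar u=u+k'$ with $k'$ comparable to $\max_i\|v_i\|_{L^p(\Omega_{R+1})}$ (so that, via a trace estimate $\|v_i|_{y=0}\|_{L^p(I_{R+1})}\le C\|v_i\|_{W^{2,p}(\Omega_{R+1})}$, which one must first convert into an $L^p$-bound — see below — the right-hand side is absorbed), and iterate. The one-dimensional Sobolev embedding $W^{1,2}(I)\hookrightarrow L^\infty(I)$ actually makes the $u$-iteration terminate in essentially one step, but it is cleaner to run it in parallel with the $v_i$-iterations.

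The delicate point, and the place where the road--field structure genuinely intervenes, is closing the loop between the constants $k$ and $k'$: $k$ was chosen in terms of $\|u\|_{L^p}$ and $k'$ in terms of $\|v_i\|_{L^p}$, which is consistent, but one must be sure no circularity sneaks in at the level of the final sup-bound. The safe route is: (i) first establish $L^\infty$ bounds on $u$ on $I_R$ purely in terms of $\|u\|_{L^p(I_{R+1})}$ and $\|v_i|_{y=0}\|_{L^p(I_{R+1})}$ (the latter quantity treated as a fixed datum); (ii) then establish $L^\infty$ bounds on $v_i$ on $\Omega_R$ in terms of $\|v_i\|_{L^p(\Omega_{R+1})}$ and $\|u\|_{L^p(I_{R+1})}$; (iii) finally use the trace inequality together with interior $W^{2,p}$ estimates — or, more elementarily, bound $\|v_i|_{y=0}\|_{L^p(I_{R+1})}$ directly inside the Moser scheme by enlarging slightly the domains ($R+1\to R+2$) — to express everything in terms of the three bulk $L^p$ norms on the larger domains, and then rescale $R$ by renaming $R+2$ as $R$. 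I expect the main obstacle to be exactly this bookkeeping: ensuring that the boundary trace of $v_i$ that appears in the $u$-equation (and the boundary trace of $u$ — i.e. the values $u(x)$, which are automatically controlled since $u$ lives on the line — that appears in the $v_i$-equation) is absorbed without creating a vicious circle, and doing so uniformly in the iteration index $\beta$. Once the reverse-Hölder inequalities are in hand for all three components with a common gain $\chi>1$, the iteration $\beta=\chi^j\cdot(p/2)$, $j\to\infty$, together with $\sum_j \chi^{-j}<\infty$, produces the stated estimate; a standard interpolation argument lowers the exponent on the right from the $L^2$-based one back to the general $p>1$ claimed in the statement.
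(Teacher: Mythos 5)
Your proposal takes a genuinely different route from the paper's, and in that difference lies a real gap. You propose to decouple the three equations, run separate Moser iterations for $u$ and each $v_i$, and absorb the cross-coupling by adding constants $k$ (for $\bar v_i=v_i+k$, $k\sim\|u\|_{L^p}$) and $k'$ (for $\bar u=u+k'$, $k'\sim\|v_i\|_{L^p}$). The difficulty you flag at the end of your write-up is exactly where the argument breaks: to close the $u$-iteration you need to control $\|v_i|_{y=0}\|_{L^p(I_{R+1})}$ by the three \emph{bulk} $L^p$ norms, but the trace of $v_i$ is not controlled by $\|v_i\|_{L^p(\O_{R+2})}$ alone --- it requires a gradient (or $W^{1,p}$) bound on $v_i$, and any Caccioppoli estimate for $v_i$ near the boundary segment $I_{R+1}\times\{0\}$ reproduces a boundary term involving $u$. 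Moreover, the reflection trick used in the paper's Lemma~\ref{WH} to push $v_i$ across the line is unavailable here because the Robin condition only has a favorable sign for \emph{super}solutions; for subsolutions, $-d_i\partial_y v_i|_{y=0}\le\mu_i u-\nu_i v_i|_{y=0}$ gives no sign. Trying to patch this by Young's inequality inside the $v_i$-Caccioppoli estimate produces a term $\int u^{2\beta}\eta^2$ whose exponent grows with the iteration index $\beta$, so you never obtain a self-improving estimate with a fixed $L^p$ datum for $u$. This is a genuine circularity, not a bookkeeping inconvenience.

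The paper avoids all of this by \emph{not} decoupling. After the rescaling $\tilde u=(\mu_1+\mu_2)u$, $\tilde v_i=\frac{\mu_1+\mu_2}{\mu_i}\nu_i v_i$, the coupling enters the weak formulation only through
\begin{equation*}
\sum_i\frac{\mu_i}{\mu_1+\mu_2}\int_{I_{R+1}}\bigl(u-\rest{v_i}{y=0}\bigr)\bigl(\rest{\psi_i}{y=0}-\phi\bigr),
\end{equation*}
and with the matched test functions $\phi=u^\beta\eta^2$, $\psi_i=v_i^\beta\eta^2$ this integrand equals $(u-\rest{v_i}{y=0})(\rest{v_i}{y=0}^\beta-u^\beta)\eta^2\le 0$, so the entire coupling can simply be discarded. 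The three Caccioppoli inequalities then merge into a single reverse-H\"{o}lder inequality for $M(q,r):=\max\{\|u\|_{L^q(I_r)},\|v_1\|_{L^q(\O_r)},\|v_2\|_{L^q(\O_r)}\}$, and the iteration runs with no trace control of $v_i$ ever needed. This sign cancellation is the one structural observation your plan is missing, and it is what makes the estimate close. If you want to pursue your decoupled route instead, you would need to supply a genuinely new ingredient to break the circle between $\sup u$ and the boundary trace of $v_i$; as written, the ``safe route (i)--(iii)'' does not do so.
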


We start by proving Lemma \ref{WH}. 
The main observation is that the functions $u$, $v_1$ and $v_2$ are supersolutions of some elliptic equations 
that do not depend on the other functions.
This allows us to apply the usual weak Harnack inequality, which is valid for supersolutions. 
This works directly with $u$, whereas for $v_1$, $v_2$ we will need to extend $v_1, v_2$ to the whole space by oblique reflection.

\begin{proof}[Proof of Lemma \ref{WH}.]
	Let $R>0$ and $p\geq1$ be given, and let $(u,v_1,v_2)$ be a non-negative supersolution of \eqref{lin2}. 

First, observe that the function $u$ is a supersolution of an elliptic equation:
$$-Du^{\prime \prime} -c u^{\prime } -(f(x)-\mu_1 - \mu_2) u\geq 0.$$ 
Hence, we can apply to $u$ the usual weak Harnack inequality (see \cite[Theorem 8.18]{GT})
to get
$$\| u\|_{L^{p}(I_{R+1})} \leq C \inf_{I_R}u,$$
where $C$ depends on $p$ and $R$.

Let us focus on $v_1$ (the situation for $v_2$ is analogous). 
In order to apply the (interior) weak Harnack inequality to $v_1$,
we want  to extend it to a supersolution of an elliptic elliptic equation on the whole plane. To do so, we start by defining
$$
w(x,y) := v_1(x,y) e^{-\frac{\nu_1}{d_1}y}.
$$
The boundary condition yields that $-d_1\partial_{y}w\vert_{y=0} \geq \mu_1 u \geq 0 $, and therefore
\begin{equation*}
\left\{
\begin{array}{rlll}
-d_1\Delta w-2\nu_1 \partial_{y}w -c_1 \partial_{x}w-(\frac{\nu_1^2}{d_1}+g_1)w &\geq  0 \quad &\text{on}\  \R \times \R^+, \\
-d_1\partial_{y}w\vert_{y=0} &\geq 0 \quad &\text{on} \R.
\end{array}
\right.
\end{equation*}
We extend $w$ to $\R^2$ by symmetry, by setting $\t w(x,y) := w(x,\vert y \vert)$, for $(x,y) \in \R^2$. Consider now the operator $\t L$ defined by 
$$\tilde{L}:=d_1\Delta + 2\nu_1 H(y) \partial_{y} +c_1\partial_{x} +\frac{\nu_1^2}{d_1}+g_1,$$
where $H(y) = 1$ if $y\geq0$ and $H(y)=-1$ if $y<0$. Because~$\partial_{y}w\vert_{y=0}\leq0$, it is readily seen that~$\t w$ 
satisfies $-\tilde{L}\t w \geq 0$ in $\O_R$, in the weak (i.e., $W^{1,2}$) sense. Then~$\t w$ is supersolution of an elliptic equation on $\R^2$.
%
%
%
%
%
%
We can therefore apply the weak Harnack inequality to $\t w$ to get
$$
\| \t w\|_{L^{p}(\Omega_{R+1})} \leq C \inf_{\Omega_{R}} \t w,
$$
for some positive constant $C$ depending on $R,p$. Recalling the definition of $\t w$, we get the weak Harnack inequality for $v_1$.
\end{proof}

We now turn to the proof of Lemma \ref{LMP}. We will apply the same strategy as for the proof of the usual local maximum principle for a single equation (see \cite[Theorem 8.17]{GT}): the idea is to write the system in a weak form and apply it to a suitable pair of test functions. Because we are dealing with a system and not with a single equation, a coupling term will appear.

\begin{proof}[Proof of Lemma \ref{LMP}] 
Let $R>0$ and $p\geq1$ be given, and let $(u,v_1,v_2)$ be a 
nonnegatice subsolution of \eqref{lin2}. We define 
$$
\tilde{u} := (\mu_1 + \mu_2) u \ \text{ and } \ \tilde{v}_i := \left( \frac{\mu_1 + \mu_2}{\mu_i} \right) \nu_{i} v_i,
$$
These are subsolutions of
\begin{equation*}
\left\{
\begin{array}{rlll}
-\t D u^{\prime \prime} -\t c u^{\prime } -\t f(x) u&=  \sum_i \frac{\mu_i}{\mu_1 + \mu_2}(\rest{v_i}{y=0}-  u), \quad &x \in \mathbb{R}, \\
-\t d_i\Delta v_i - \t c_i \partial_{x}v_i - \t g_i(x,y)v_i&= 0, \quad &(x,y) \in \mathbb{R}\times \mathbb{R}^{+}, \\
-\t d_i\partial_{y}\rest{v_i}{y=0} &= \frac{\mu_i}{\mu_1 + \mu_2} (u-  \rest{v_i}{y=0}), \quad &x \in \mathbb{R},
\end{array}
\right.
\end{equation*}
for some coefficients $\t D, \t d_i, \t c, \t c_i>0$ and functions $\t f,\t g_i$.
In the rest of the~proof, 
we omit the tilde on the above terms as well as on the subsolutions $\t u, \t v_1, \t v_2$.
We start by writing the system in a weak form. In order to do so, 
we take $(\phi, \psi_1,\psi_2)$ regular (${\rm C}^{1}$) non-negative test functions such that $\phi$ has compact support in $I_{R+1}\times \{0\}$ and $\psi_1$ and $\psi_2$ have compact supports in $\ol{\O}_{R+1}$ and we multiply the first equation, i.e., the equation on the road, by $\phi$ and the $i$-th equation (the one in the $i$-th side of the field) by $\psi_i$ ($i=1,2$) and integrate by parts. Using the boundaries equations, we get:
\begin{equation*}
\begin{array}{rl}
\int_{I_{R+1}} \Big( Du^{\prime}\phi' -cu'\phi -f(x)u\phi \Big) &\leq  \sum_i \frac{\mu_i}{\mu_1 + \mu_2} \int_{I_{R+1}}\Big( v_i\vert_{y=0}-u\Big)\phi,
\end{array}
\end{equation*}
and
$$
\int_{\Omega_{R}} \Big( d_i\nabla v_i\. \nabla \psi_i -c_i\partial_{x}v_i \psi -g_i(x,y)v_i\psi_i \Big) \leq
\frac{\mu_i}{\mu_1 + \mu_2}  \int_{I_{R+1}}\left(u-  \rest{v_i}{y=0} \right)\psi_i\vert_{y=0}.
$$
Summing all the inequalities leads to
\begin{equation}\label{variational2}
\begin{array}{rr}
\int_{I_{R+1}} \Big( Du^{\prime}\phi' -cu'\phi -f(x)u  \Big)
\quad + &\sum_i\int_{\Omega_{R+1}} \Big( d_i\nabla v_i\. \nabla \psi_i -c_i\partial_{x}v_i \psi_i -g_i(x,y)v_i\psi_i  \Big) \\
& \leq \sum_i \frac{\mu_i}{\mu_1 + \mu_2} \int_{I_{R+1}}\left( u-  \rest{v_i}{y=0} \right)(\psi_i\vert_{y=0}-\phi).
\end{array}
\end{equation}
Now, we consider $R_{1}, R_{2} \in \mathbb{R}$ to be specified later such that $R\leq R_{1}<R_{2} \leq R+1$. For $\beta>0$ set
$$
\psi_i(x,y):= v_i^{\beta}(x,y)\eta^{2}(x,y) \ \text{ and } \ \phi(x):=u^{\beta}(x)\eta^{2}(x,0),$$
 where $\eta \ : \ \R \times [0,+\infty) \to \R$ is a non-negative smooth truncation function such that $\eta=1$ on $\Omega_{R_{1}}$, $\eta = 0$ on $\R \times [0,+\infty) \backslash\Omega_{R_{2}}$ and $\eta \leq 1$. Moreover, we choose it such that $\vert \nabla \eta \vert \leq \frac{2}{R_{2}-R_{1}}$. Note that $\phi, \psi_i$ might not be $C^1$ if $\beta <1$. To avoid this diffuculty we assume that $u$, $v_i$ are strictly positive (this is automatically verified if we are dealing with non-zero solutions). In the general case, it is sufficient to consider $(\overline{u},\overline{v}_{1}, \overline{v}_{2}):=(u+k,v_1+\frac{\mu_1}{\nu_1}k,v_2+\frac{\mu_2}{\nu_2}k)$, with $k>0$, and to let $k$ go to zero, see \cite[Theorem 8.17]{GT} for the details.

The crucial observation is that, since $\beta>0$, we have
$$
\int_{I_{R+1}}\left(u -  v_i\vert_{y=0}\right)(\psi_i\vert_{y=0} - \phi)= \int_{I_{R+1}}\left( u -  v_i\vert_{y=0}\right)( v_i\vert_{y=0}^{\beta} -   u^{\beta})\eta\vert_{y=0}^{2}\leq 0,
$$ 
Implementing these ``test functions" $\phi, \psi_1, \psi_2$ in \eqref{variational2}, we get

\small
\[
\begin{split}
\sum_i&\left(\beta d_i\int_{\Omega_{R+1}} \vert \nabla v_i \vert^{2}v_i^{\beta-1}\eta^{2} +
2d_i \int_{\Omega_{R+1}}(\nabla v_i\. \nabla \eta) v_i^{\beta} \eta - \int_{\Omega_{R+1}}c_i\partial_{x}v_i v_i^{\beta}\eta^{2}  - \int_{\Omega_{R+1}}g_iv_i^{\beta+1}\eta^{2}\right) \\ 
&+ \beta D \int_{I_{R+1}} \vert u^{\prime} \vert^{2}u^{\beta-1}\eta^{2} +2D\int_{I_{R+1}}u^{\prime}  \eta^{\prime} u^{\beta} \eta -\int_{I_{R+1}}cu' u^\beta \eta^2  -\int_{I_{R+1}}f u^{\beta+1} \eta^{2}
 \leq 0.
\end{split}
\]
\normalsize
Now, for any $\e >0$, we estimate two of the above terms as follows:
$$
\left|  2\int_{\Omega_{R+1}} \Big( \nabla v\. \nabla \eta \Big) v^{\beta} \eta \right| \leq 
\int_{\Omega_{R+1}} \Big(\e \eta^{2} \vert \nabla v\vert^{2} v^{\beta-1}  +\frac{1}{\e}\vert \nabla \eta \vert^{2}  v^{\beta+1} \Big),
$$
$$
\left|2\int_{I_{R+1}}u^{\prime}  \eta^{\prime} u^{\beta} \eta\right| \leq
\int_{I_{R+1}} \Big(\e \eta^{2} |u'|^{2}u^{\beta-1} +\frac{1}{\e}|\eta'|^{2}u^{\beta+1} \Big).
$$
Two other terms are estimated by integration by part:
$$
\left|  \int_{\Omega_{R+1}} \partial_{x} v \eta^{2} v^{\beta}  \right| \leq 
\int_{\Omega_{R+1}} \Big(\vert \nabla \eta\vert^{2} + \eta^{2}\Big)v^{\beta+1},
$$
$$
\left| \int_{I_{R+1}}u^{\prime}  \eta^{2} u^{\beta} \right|\leq
\int_{I_{R+1} }\Big(\vert \eta^{\prime}\vert^{2} + \eta^{2} \Big)u^{\beta+1}.
$$
Using these inequalities together with $\left|  \int_{\Omega_{R+1}}gv^{\beta+1}\eta^{2} \right| \leq \|g \|_{L^{\infty}} \int_{\Omega_{R+1}}v^{\beta+1}\eta^{2}$ and $\left|  \int_{I_{R+1}}fu^{\beta+1}\eta^{2} \right| \leq \|f \|_{L^{\infty}} \int_{I_{R+1}}u^{\beta+1}\eta^{2}$,
yields:

\[
\begin{split}
 \sum_i \left(  d_i(\beta -\e)\int_{\Omega_{R+1}}\vert \nabla v_i \vert^{2}v_i^{\beta-1}\eta^{2} \right)+
D(\beta-\e)\int_{I_{R+1}}\vert u^{\prime} \vert^{2}u^{\beta-1}\eta^{2}\quad\quad\quad\quad\quad\quad\quad\quad \\
\leq \sum_i \left(  \frac{d_i}{\e} \int_{\Omega_{R+1}}\vert \nabla \eta\vert^{2}v_i^{\beta+1}\right)+\frac{D}{\e} \int_{I_{R+1} }\vert \eta^{\prime} \vert^{2}u^{\beta+1} +\sum_i   \| g_i \|_{L^{\infty}} \int_{\Omega_{R+1}}v_i^{\beta+1}\eta^{2} \\+ \| f\|_{L^{\infty}}\int_{I_{R+1}}u^{\beta+1}\eta^{2} 
+\sum_i  \vert c_i \vert\int_{\Omega_{R+1}}\Big(\vert \nabla \eta\vert^{2} + \eta^{2}\Big)v_i^{\beta+1} + \vert c \vert \int_{I_{R+1}}\Big(\vert \eta^{\prime}\vert^{2} + \eta^{2}\Big)u^{\beta+1}.
\end{split}
\]
We now define 
$$
w_{i}:=v_i^{\frac{\beta+1}{2}} \ \text{ and } \ z:=u^{\frac{\beta+1}{2}},
$$
and let $\e \leq \min\{ d_1, d_2, D\}$. Then, there is a constant $C>0$ depending only on $\| g_i \|_{L^{\infty}}$, $\| f \|_{L^{\infty}}$, $\vert c_i \vert$ and $\vert c \vert$, such that the above inequality yields:

\[\begin{split}
&\sum_i \left(  4d_i \frac{\beta -\e}{(\beta+1)^{2}}\int_{\Omega_{R+1}}\vert \nabla w_{i} \vert^{2}\eta^{2}\right) +
4D \frac{\beta -\e}{(\beta+1)^{2}}\int_{I_{R+1}}\vert z^{\prime} \vert^{2}\eta^{2} \\
& \leq  C \sum_i  \left( \frac{d_i}{\e} \int_{\Omega_{R+1}}(\vert \nabla \eta\vert^{2} +\eta^{2})w_{i}^{2}  \right)      
+\frac{D}{\e} \int_{I_{R+1}} (\vert \eta^{\prime} \vert^{2}+ \eta^{2})z^{2}.
\end{split}\]
Here and in the sequel, $C$ stands for a generic constant which only depends on the coefficients
of the equation. If we let $\e = \min\{ \frac{\beta}{2} , D,d_1, d_2\}$, we get

\begin{equation}\label{vartest4}
\begin{array}{lll}
\sum_i&\int_{\Omega_{R+1}}\vert \nabla w_{i} \vert^{2}\eta^{2} +\int_{I_{R+1}}\vert z^{\prime} \vert^{2}\eta^{2}
\leq \\ 
&C\frac{(\beta+1)^2}{\beta}\left(\sum_i   \int_{\Omega_{R+1}}(\vert \nabla \eta\vert^{2} +\eta^{2})w_{i}^{2} + \int_{I_{R+1}} (\vert \eta^{\prime} \vert^{2}  +\eta^{2})z^{2}\right).
\end{array}
\end{equation}
Now, we apply the Sobolev inequalities (see \cite[Theorem 7.26]{GT}) for, say, the space $L^4$ (remember that we are in space dimension $1$ on the road and $2$ in the field and that $\eta \in H^{1}_{0}(\Omega_{R+1}\cup (I_{R+1}\times \{ 0\})$), we thus get

$$\| \eta w_{i}\|_{L^4(\O_{R+1})} \leq C \| \nabla (\eta w_{i} )\|_{L^2(\O_{R+1})} \leq C \|\eta( \nabla w_{i} )\|_{L^2(\O_{R+1})} +C \| (\nabla \eta) w_{i} \|_{L^2(\O_{R+1})}$$ and 
$$\| \eta z \|_{L^4(I_{R+1})} \leq C \|  (\eta z )^{\prime}\|_{L^2(I_{R+1})} \leq C \|  \eta z^{\prime}\|_{L^2(I_{R+1})}+ C \|  \eta^{\prime} z \|_{L^2(I_{R+1})}.$$ 
%
%
Combining this with \eqref{vartest4} we get
\begin{equation*}
\sum_i \| \eta w_{i}\|_{L^4}^{2} +\| \eta z\|_{L^4}^{2}
\leq C\frac{(\beta+1)^2}{\beta}\left(\sum_i \int_{\Omega_{R+1}}\Big(\vert \nabla \eta\vert^{2} +\eta^{2}\Big)w_{i}^{2}     
 + \int_{I_{R+1}} \Big(\vert \eta^{\prime} \vert^{2}  +\eta^{2}\Big)z^{2}\right),
\end{equation*}
Where $C$ is a positive constant depending now on $R$ (but not on $\beta$).
Because $\eta=1$ on $\O_{R_1}$ and
$ \vert \nabla \eta \vert \leq \frac{2}{R_{2}-R_{1}}$, we derive
\begin{equation*}
\sum_i\| w_{i}\|_{L^4(\Omega_{R_{1}})}^{2} +\|  w_{2}\|_{L^4(I_{R_1})}^{2} 
\leq C\frac{(\beta+1)^2}{\beta(R_{2}-R_{1})^{2}}\left(\sum_i \|w_{i}\|_{L^{2}(\Omega_{R_{2}})}^{2}    + \|z\|_{L^{2}(I_{R_2})}^{2} \right) .
\end{equation*}
Writing for short $\gamma:=\beta+1$, 
and remembering that $w_{i}=v_i^{\frac{\gamma}{2}}$ and $z=u^{\frac{\gamma}{2}}$, 
this inequality rewrites as
\begin{equation}\label{M}
\sum_i \|v_i\|_{L^{2\gamma}(\O_{R_1})}^\gamma +\|u\|_{L^{2\gamma}(\O_{R_1})}^\gamma  
\leq C\frac{\gamma^2}{(\gamma-1)(R_{2}-R_{1})^{2}}
\left(\sum_i \|v_i\|_{L^{\gamma}(\O_{R_2})}^\gamma +\|u\|_{L^{\gamma}(I_{R_2})}^\gamma  \right).
\end{equation}
Then, for $q,r >0$, define
$$
M(q,r) := \max \{\|u\|_{L^q(I_r)},  \|v_1\|_{L^q(\O_r)},  \|v_2\|_{L^q(\O_r)} \}.
$$
Keeping in mind that $\gamma>1$ and $R \leq R_1<R_2\leq R+1$, \eqref{M} yields
\begin{equation*}
\big( M(2\gamma, R_1)\big)^{\gamma} 
\leq 3C\frac{\gamma^2}{(\gamma-1)(R_{2}-R_{1})^{2}}\big(M(\gamma , R_2)\big)^{\gamma}.
\end{equation*}
We recursively apply this inequality, starting from 
an arbitrary $\gamma=p>1$, $R_1=R$, $R_2=R+1/2$ and taking, at the $m$-th step,
$$\gamma=2^mp,\quad R_1=R+\sum_{n=1}^m 2^{-n}, \quad R_2=R+\sum_{n=1}^{m+1} 2^{-n}.$$ 
Going to the limit we finally get
$$M(+\infty, R)\leq \prod_{m=0}^\infty \left(3C\frac{2^{2m}p^{2}}{(2^{m}p-1)2^{-2m-2}}\right)^{\frac{1}{2^mp}}
M(p,R+1),$$
where 
\begin{multline*}
M(+\infty,r) = \lim_{q \to +\infty} \max\{\|u\|_{L^{q}(I_r)} , \|v_1\|_{L^{q}(\O_r)}, \|v_2\|_{L^{q}(\O_r)} \} \\ =  \max \{\|u\|_{L^{\infty}(I_r)}, \|v_1\|_{L^{\infty}(\O_r)}, \|v_2\|_{L^{\infty}(\O_r)} \}.
\end{multline*}
Observing that the infinite product in the above inequality converges, our result follows.
\end{proof}

Lemmas \ref{WH} and \ref{LMP} yield Proposition \ref{Prop Harnack}. In order to prove Theorem \ref{Harnack}, we derive the following.

\begin{prop}\label{min max}
For any $R>0$, there is a constant $C>0$ such that, for any non-negative solution $(u,v_1,v_2)$ of \eqref{lin2}, there holds, for $i=1,2$,
$$
 \frac{1}{C} \sup_{I_R}u \leq \inf_{\O_R} v_i \leq \sup_{\O_R} v_i  \leq C \inf_{I_R}u.
$$
\end{prop}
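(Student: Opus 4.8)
The plan is to upgrade the ``max--max'' estimate of Proposition~\ref{Prop Harnack}, which by itself does not link $u$ to a prescribed $v_i$, into the separate comparison of $u$ with each $v_i$, by a normalized compactness argument. The elementary fact I will use repeatedly is that a non-negative solution of~\eqref{lin2} that vanishes at some point is identically zero: the strong maximum principle applied to the scalar elliptic equation handles an interior zero of $u$ or of some $v_i$; a boundary zero of $v_i$ with $v_i>0$ inside is excluded by the Hopf lemma together with $-d_i\partial_y v_i\vert_{y=0}=\mu_i u-\nu_i v_i\vert_{y=0}$ (which would force $\mu_i u<0$ there); and once $u\equiv0$ or some $v_i\equiv0$, the equation for $u$ and the two exchange conditions propagate the vanishing to the whole triple. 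Since~\eqref{lin2} is linear and homogeneous, for a nontrivial solution all the infima in the statement are therefore positive, the trivial solution obviously satisfies the inequalities, and the middle one $\inf_{\O_R}v_i\le\sup_{\O_R}v_i$ is automatic; so it suffices to exhibit one constant $C>0$ such that $\sup_{\O_R}v_i\le C\inf_{I_R}u$ and $\sup_{I_R}u\le C\inf_{\O_R}v_i$ for every nontrivial non-negative solution and $i=1,2$.

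Arguing by contradiction, failure of this would yield a fixed $i$, a sequence of nontrivial non-negative solutions $(u^n,v_1^n,v_2^n)$, and one of the alternatives $\sup_{\O_R}v_i^n>n\inf_{I_R}u^n$ or $\sup_{I_R}u^n>n\inf_{\O_R}v_i^n$ holding for all $n$. I would normalize by $M_n:=\max\{\inf_{I_R}u^n,\inf_{\O_R}v_1^n,\inf_{\O_R}v_2^n\}>0$, replacing each triple by $(U^n,V_1^n,V_2^n):=(u^n,v_1^n,v_2^n)/M_n$, so that $\max\{\inf_{I_R}U^n,\inf_{\O_R}V_1^n,\inf_{\O_R}V_2^n\}=1$. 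Applying Proposition~\ref{Prop Harnack} on balls of radius $R'\nearrow+\infty$, and using that the infimum over a larger set is smaller, the rescaled solutions are bounded, uniformly in $n$, on every $\O_{R'}$ and $I_{R'}$. I would then bootstrap: the one-dimensional equation for $U^n$ has right-hand side bounded in $L^\infty$ (it is $\nu_1 V_1^n\vert_{y=0}+\nu_2 V_2^n\vert_{y=0}$ up to lower-order terms), so $U^n$ is bounded in $C^{1,\alpha}_{loc}(\R)$; feeding $\mu_i U^n$ as boundary datum into the Robin-type problem satisfied by $V_i^n$ in the half-plane, interior and up-to-the-flat-boundary $W^{2,p}$ estimates bound $V_1^n,V_2^n$ in $C^{1,\alpha}_{loc}(\R\times[0,+\infty))$. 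A diagonal extraction yields a subsequence converging in $C^1_{loc}$ to a non-negative solution $(U,V_1,V_2)$ of~\eqref{lin2}; the convergence being uniform on the compacta $\overline{I_R}$ and $\overline{\O_R}$, one keeps $\max\{\inf_{I_R}U,\inf_{\O_R}V_1,\inf_{\O_R}V_2\}=1$.

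The contradiction is now immediate from the chosen alternative together with Proposition~\ref{Prop Harnack} (which gives $\sup_{\O_R}V_i^n\le C$ and $\sup_{I_R}U^n\le C$ after normalization). In the first case one gets $\inf_{I_R}U^n\to0$, so $U$ vanishes at a point of $\overline{I_R}$; in the second case one gets $\inf_{\O_R}V_i^n\to0$, so $V_i$ vanishes at a point of $\overline{\O_R}$. By the elementary fact of the first paragraph, in either case $(U,V_1,V_2)\equiv(0,0,0)$, hence $\max\{\inf_{I_R}U,\inf_{\O_R}V_1,\inf_{\O_R}V_2\}=0$, contradicting the normalization. This proves the proposition.

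The step I expect to be the main obstacle is the elliptic bootstrap up to the flat boundary $\{y=0\}$: the Robin-type condition for $v_i$ carries the datum $\mu_i u$, so one cannot estimate $v_i$ before controlling $u$, nor $u$ before controlling the traces of the $v_j$, and it is exactly the cheap ``max--max'' bound of Proposition~\ref{Prop Harnack} that breaks this circularity by furnishing the uniform sup-bounds from which the $C^{1,\alpha}$ bound on $u$, and then the up-to-the-boundary bound on $v_i$, are obtained. The second delicate point, elementary but genuinely relying on the coupling, is the unique-continuation statement of the first paragraph, which must be propagated through the exchange conditions from $v_i$ to $u$ and back to $v_j$.
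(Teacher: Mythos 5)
Your proof is correct and follows essentially the same route as the paper's: a contradiction argument in which a sequence of solutions is normalized, Proposition~\ref{Prop Harnack} supplies uniform sup-bounds, elliptic estimates give $C^1_{loc}$-compactness, and the strong maximum principle together with the Hopf lemma (applied through the exchange conditions) rule out the nontrivial limit that must nevertheless vanish somewhere. The only cosmetic difference is the choice of normalization---the paper imposes $v_{1,n}(x_n,y_n)+v_{2,n}(x_n,y_n)+u_n(z_n)=1$ at the contradiction points, whereas you divide by $\max\{\inf_{I_R}u^n,\inf_{\O_R}v_1^n,\inf_{\O_R}v_2^n\}$; both serve the same purpose of making the limit nontrivial.
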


\begin{proof} Let $R>0$ be fixed. We only prove the result for $i=1$. We argue by contradiction and we assume that there is a sequence of non-negative solutions $((u_{n},v_{1,n}, v_{2,n}))_{n\in \mathbb{N}}$ of \eqref{lin2} and two sequences $((x_n , y_n))_{n\in \mathbb{N}}$ and $(z_n )_{n \in \mathbb{N}}$ of points in $\O_{R}$ and $I_R$ respectively such that
$$
\frac{v_{1,n}(x_n , y_n)}{u_{n} (z_n )} \underset{n\to +\infty}{\longrightarrow} +\infty.
$$
Normalize $(u_n , v_{1,n} , v_{2,n})$ so that 
$$
v_{1,n} (x_n , y_n ) + v_{2,n} (x_n , y_n ) + u_n(z_n) = 1.
$$
 Hence, $u_n (z_n) \to 0$ as $n$ goes to $+\infty$. Up to extraction, we now assume that there are $(x_\infty,y_\infty) \in \ol \O_{R}$ and $z_\infty \in \ol I_R$ such that $(x_n , y_n ) \to (x_\infty , y_\infty ) $ and $z_n \to z_\infty$ as $n$ goes to $+\infty$.

Then, for any $R' >R$, using Proposition \ref{Prop Harnack} we find that there is a constant $C>0$ independent of $n$ such that
$$
\max\left\{ \sup_{I_{R'} }u_n,\   \sup_{\Omega_{R'}}v_{1,n}, \  \sup_{\Omega_{R'}}v_{2,n}  \right\} \leq 
C\max\left\{ u_n (z_n), \ v_{1,n}(x_n , y_n), v_{2,n}(x_n , y_n) \right\} \leq C.
$$
Because of this, by the elliptic estimates, $((u_n,v_{1,n}, v_{2,n}))_{n\in \N}$ converges (up to some subsequence) locally uniformly in $\R, \R \times [0, +\infty), \R \times [0, +\infty)$ to a solution $(u_{\infty},v_{1,\infty},v_{2,\infty})$ of \eqref{lin2}, such that $u_{\infty}(z_{\infty}) = 0$ and $v_{1,\infty}(x_\infty , y_\infty )+v_{2,\infty}(x_\infty , y_\infty ) = 1$. The strong maximum principle yields $(u_{\infty}, v_{1,\infty},v_{2,\infty}) \equiv (0,0,0)$, which is the contradiction we sought. We have then shown that
$$
\sup_{\O_R}v_1 \leq C \inf_{I_R} u,
$$
for some constant $C>0$ independent of $u,v_1,v_2$. We can use the same arguments to show that
$$
\sup_{I_R}u \leq C \inf_{\O_R} v_1.
$$
\end{proof}
Finally, combining Proposition \ref{min max} with Proposition \ref{Prop Harnack} we derive Theorem \ref{Harnack}.

\section{The generalized principal eigenvalue} \label{SectionLinearized}

In this section, we prove some properties of the generalized principal eigenvalue $\l$ defined by \eqref{fgpe}.
As a preliminary we show that such quantity is a well-defined real number.
\begin{prop}\label{bound lambda}
The generalized principal eigenvalue $\l$ defined by \eqref{fgpe} is a real number and satisfies
$$
\min\{ 0, -\sup a_1, -\sup a_2 \} \leq \l \leq \min\left\{\frac{c^2}{4D} + \mu_1 + \mu_2, \lambda_D(-L_i)  \right\},
$$
where $\lambda_D (-L_i)$ denotes the generalized principal eigenvalue of the operator $-L_i$ defined on $\R \times \R^+$ with Dirichlet boundary condition.
\end{prop}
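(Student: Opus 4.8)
The plan is to get the lower bound from a single explicit constant supersolution of \eqref{fgpe}, and the two upper bounds by testing an arbitrary admissible triple against scalar comparison operators — one on the road, one in each half of the field — after a preliminary observation that admissible triples have no identically vanishing component. For the lower bound I would take the constant triple $(u,v_1,v_2)=(1,\mu_1/\nu_1,\mu_2/\nu_2)$ and $\lambda=\min\{0,-\sup a_1,-\sup a_2\}$, which is a real number since the $a_i$ are bounded. The exchange terms then vanish, $B_i(u,v_i)=\mu_i-\nu_i(\mu_i/\nu_i)=0$; the road inequality reduces to $\lambda\le0$ because $\nu_1 v_1|_{y=0}+\nu_2 v_2|_{y=0}=\mu_1+\mu_2$ exactly cancels $(\mu_1+\mu_2)u$; and the field inequalities reduce to $(a_i(x,y)+\lambda)\mu_i/\nu_i\le0$ for a.e.\ $(x,y)$, i.e.\ $\lambda\le-\sup a_i$. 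Thus this triple is admissible, so $\lambda_1\ge\min\{0,-\sup a_1,-\sup a_2\}>-\infty$.

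For the upper bounds the key step is a nondegeneracy property: \emph{any} triple $(u,v_1,v_2)\ge0$, $\not\equiv(0,0,0)$, solving the three inequalities in \eqref{fgpe} for some $\lambda$ satisfies $u>0$ on $\R$ and $v_i>0$ on $\R\times(0,+\infty)$, $i=1,2$. I would argue as follows. If $v_1\equiv0$, then $B_1(u,v_1)=\mu_1 u\le0$ forces $u\equiv0$, and then the road inequality $L_0(0,0,v_2)=\nu_2 v_2|_{y=0}\le0$ forces $v_2|_{y=0}\equiv0$; but $v_2\not\equiv0$ is a nonnegative supersolution of the scalar equation $-L_2v_2-\lambda v_2\ge0$ (with bounded zeroth-order coefficient), so by the strong maximum principle $v_2>0$ in the open half-plane and, by Hopf's lemma, $\partial_y v_2|_{y=0}>0$, contradicting $B_2(0,v_2)=d_2\partial_y v_2|_{y=0}\le0$. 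Hence $v_1\not\equiv0$, and symmetrically $v_2\not\equiv0$, so by the strong maximum principle both are positive in $\R\times(0,+\infty)$. Running the same reasoning once more: if $u\equiv0$ the road inequality forces $v_1|_{y=0}\equiv0$, Hopf's lemma gives $\partial_y v_1|_{y=0}>0$, contradicting $B_1(0,v_1)\le0$; so $u\not\equiv0$ and hence $u>0$ on $\R$.

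Granting this, the two upper bounds become standard scalar comparisons. Since $v_i>0$ satisfies $-L_iv_i\ge\lambda v_i$ in $\R\times(0,+\infty)$, it is an admissible test function in the variational characterization of the Dirichlet generalized principal eigenvalue in the sense of \cite{BNV}, hence $\lambda\le\lambda_D(-L_i)$. For the road, discarding the nonnegative term $\nu_1 v_1|_{y=0}+\nu_2 v_2|_{y=0}$ from the road inequality leaves $-Du''-cu'\ge(\lambda-\mu_1-\mu_2)u$ with $u>0$ on $\R$; writing $u=\psi\,e^{-cx/(2D)}$ turns this into $-D\psi''\ge(\lambda-\mu_1-\mu_2-\frac{c^2}{4D})\psi$ with $\psi>0$ on $\R$, and a positive function on all of $\R$ cannot be a supersolution of $-D\psi''=\kappa\psi$ with $\kappa>0$ (it would then be positive and strictly concave on all of $\R$, which is impossible). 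Hence $\lambda\le\frac{c^2}{4D}+\mu_1+\mu_2$. Taking the supremum over admissible $\lambda$ gives the stated upper bound, and in particular $\lambda_1<+\infty$, so $\lambda_1\in\R$.

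I expect the nondegeneracy step of the second paragraph to be the main obstacle: one must exclude admissible triples with an identically vanishing component, and this is precisely where the nonstandard coupling through the boundary conditions is felt, since the argument has to feed the road, field, and exchange inequalities into one another and invoke Hopf's lemma. Once that is in place, the remainder only uses well-known properties of the generalized principal eigenvalue of a single second-order elliptic operator (on $\R$ and on the half-plane with Dirichlet condition).
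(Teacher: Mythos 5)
Your proof is correct and takes the same route as the paper: the constant triple $(1,\mu_1/\nu_1,\mu_2/\nu_2)$ for the lower bound, and scalar comparison with the generalized Dirichlet eigenvalues of $-L_0$ and $-L_i$ for the upper bounds, where your substitution $u=\psi\,e^{-cx/(2D)}$ simply re-derives the value $c^2/(4D)+\mu_1+\mu_2$ that the paper quotes from \cite{BR1,BR2}. Your explicit nondegeneracy step --- feeding the exchange and road inequalities into Hopf's lemma to rule out any identically vanishing component --- is a genuine improvement: the paper applies the scalar characterizations directly to $u$ and $v_i$ without verifying that these components are nontrivial, which is needed since the scalar definitions of the generalized principal eigenvalue require a \emph{positive} test function.
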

For the theory of the generalized principal eigenvalue for elliptic operators in unbounded domains under Dirichlet boundary conditions we refer to \cite{BR1, BR2}. There, it is proved in particular that $\lambda_D(-L_i) \in \R$. 
\begin{proof}
To show the lower bound, we use 
$$
(u,v_1,v_2) = \left(1, \frac{\mu_1}{\nu_1}, \frac{\mu_2}{\nu_2}\right)
$$
 as a ``test function" in \eqref{fgpe} with $\lambda = \min\{ 0, -\sup a_1, -\sup a_2 \}$. The upper estimate that ensures that $\l \in \R$ is readily derived by observing that, if $\lambda \in \R$ and $(u,v_1,v_2)$ satisfy the conditions in \eqref{fgpe}, then
  $$
 D u^{\prime \prime} + c u^{\prime} -(\mu_1 + \mu_2) u + \lambda u \leq 0.
 $$
 By definition of the generalized principal eigenvalue for the scalar operators $-L_0$, which is equal to $\frac{c^2}{4D} + \mu_1 + \mu_2$ (see \cite{BR1, BR2}), this implies that $\lambda \leq \frac{c^2}{4D} + \mu_1 + \mu_2$.
 
 
 Likewise, since 
 $$
 d_i\Delta v_i +c_i \partial_{x}v_i +a_i v_i + \lambda v_i \leq 0.
 $$
It follows from the definition of $\lambda_{D}(-L_i)$ that $\l \leq \lambda_D(-L_i)$, for $i=1,2$.
\end{proof}

\subsection{Principal eigenvalue on truncated domains}\label{section omega}

We now turn to the eigenproblem in the truncated domain.
The existence of the principal eigenvalue is contained in the following.
\begin{prop}\label{propKR}
For $R>0$, there is a unique $\lambda_{1}^{R} \in \R$ and a triplet of non-negative, not
identically equal to zero functions $(u_R,v_{1,R},v_{2,R})$ satisfying \eqref{eigborn}. This eigenfunctions triplet is unique (up to multiplication by a positive scalar). Moreover, any other eigenvalue~$\lambda$ 
of~\eqref{eigborn} satisfies $\Re(\lambda) > \lambda^R_1$.
\end{prop}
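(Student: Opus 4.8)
The proof proceeds by applying the Krein--Rutman theorem to a solution operator for \eqref{eigborn}, and then deducing the statement on the remaining eigenvalues from a maximum principle argument.

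\emph{Set-up of the solution operator.} Fix $K>0$ large enough that the bilinear form associated with the $K$-shifted system is coercive, and for a source triplet $(f_0,f_1,f_2)$ consider the linear problem obtained from \eqref{eigborn} by replacing $\lambda u,\lambda v_i$ with $-Ku+f_0,-Kv_i+f_i$, keeping the exchange conditions $B_i(u,v_i)=0$, the Dirichlet condition on $(\partial\Omega_R)\setminus(I_R\times\{0\})$ and $u(\pm R)=0$. For such $K$ this problem has a unique solution: one may argue by Lax--Milgram in the natural energy space — the exchange conditions producing a sign-definite boundary term that keeps the form coercive — or by a contraction argument recovering $u$ from $v_1|_{y=0},v_2|_{y=0}$ through the road equation and each $v_i$ from $u$ through the $i$-th field equation with its Robin condition. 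Since $\Omega_R$ is non-smooth at $(\pm R,0)$, regularity up to these corners, hence $C^1$-regularity of $(u,v_1,v_2)$ up to the junction, follows from the reflection across $\{y=0\}$ already used in Section~\ref{SectionHarnack}, which turns the exchange condition into a transmission-type condition and straightens the corner into a smooth Dirichlet boundary. This yields a bounded linear operator $T(f_0,f_1,f_2):=(u,v_1,v_2)$ on the Banach space $X$ of continuous triplets vanishing on the Dirichlet part of $\partial\Omega_R$.

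\emph{Compactness, positivity, Krein--Rutman.} Elliptic $L^p$-estimates with $p>2$, together with the reflection, bound $T(f_0,f_1,f_2)$ uniformly in $C^1$, so $T$ is compact. If moreover $(f_0,f_1,f_2)\ge0$ and $\not\equiv 0$, then the maximum principle and the Hopf (boundary-point) lemma for system~\eqref{lin} proved in the Appendix show that $u,v_1,v_2$ are strictly positive in the interior of their domains and have the expected sign of the normal derivative on the Dirichlet boundary; the coupling is essential here, as positivity of some $v_i$ forces, through $B_i$, positivity of $u$ on $I_R$, which in turn feeds back into the field equations. Hence $T$ is strongly positive (equivalently, compact, positivity-improving and irreducible), and the Krein--Rutman theorem gives that $\rho:=\rho(T)>0$ is an algebraically simple eigenvalue, is the only eigenvalue of $T$ with an eigenvector in the cone, has a positive eigenvector $(u_R,v_{1,R},v_{2,R})$, and dominates strictly in modulus every other eigenvalue of $T$. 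Unwinding the substitution, $\lambda_1^R:=\rho^{-1}-K$ is the unique real number for which \eqref{eigborn} admits a non-negative, non-trivial solution; that solution is positive in the interior and unique up to a positive multiple. This establishes the first three assertions.

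\emph{Eigenvalues with larger real part.} Let $\lambda\in\mathbb C$ be any eigenvalue of \eqref{eigborn} with (complex) eigenfunction $(u,v_1,v_2)$. Applying Kato's inequality line by line — on the road $D|u|''+c|u|'\ge\Re\big(\overline{\mathrm{sgn}\,u}\,(Du''+cu')\big)$ combined with $|\Re(\overline{\mathrm{sgn}\,u}\,v_i|_{y=0})|\le|v_i|_{y=0}$, and analogously in the field and on the exchange conditions — one checks that $(|u|,|v_1|,|v_2|)$ is a non-negative, non-trivial subsolution of \eqref{eigborn} with $\lambda$ replaced by $\Re\lambda$, vanishing on the Dirichlet part of the boundary. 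Sliding the positive eigenfunction against it (take $t^\ast$ the smallest $t>0$ with $t(u_R,v_{1,R},v_{2,R})\ge(|u|,|v_1|,|v_2|)$ componentwise): if $\Re\lambda\le\lambda_1^R$ the difference $t^\ast(u_R,v_{1,R},v_{2,R})-(|u|,|v_1|,|v_2|)$ is a non-negative supersolution of the system with parameter $\Re\lambda$ touching $0$, so the strong maximum principle and the boundary-point lemma for the system force it to vanish identically; this gives $\Re\lambda\ge\lambda_1^R$, and in the case $\Re\lambda=\lambda_1^R$ it gives that $(|u|,|v_1|,|v_2|)$ is a positive multiple of $(u_R,v_{1,R},v_{2,R})$. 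In the latter case equality must hold throughout Kato's inequalities, which forces $u=e^{i\theta}u_R$ and $v_i=e^{i\theta}v_{i,R}$ for a common constant phase $\theta$, hence $\lambda=\lambda_1^R$. Therefore $\Re\lambda>\lambda_1^R$ for every eigenvalue $\lambda\ne\lambda_1^R$, completing the proof.

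The main obstacle throughout is the nonstandard coupling through the boundary conditions, which rules out direct appeals to scalar or cooperative-system theory: unique solvability and, above all, strong positivity of $T$, as well as the comparison in the last step, all rest on the system version of the maximum principle and Hopf lemma proved in the Appendix; the corner of $\Omega_R$ is a secondary technical point dealt with by reflection, and a little care is needed where $v_i|_{y=0}$ vanishes so that Kato's inequality and the boundary comparison remain valid there.
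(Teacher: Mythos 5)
Your proposal follows the paper's strategy for the first three conclusions: build a shifted solution operator $T$ for the system with $+Mu,+Mv_i$ (the paper's Proposition~\ref{T}), show $T$ is compact via the $W^{2,p}\hookrightarrow C^1$ estimates obtained through the reflection across $\{y=0\}$, show $T$ is strongly positive via the Hopf-type strong positivity of Lemma~\ref{int}, and invoke the Krein--Rutman Theorem~\ref{th KR} in the cone $\mc C_R^+$. Your Lax--Milgram/contraction sketch for the unique solvability differs in presentation from the paper (which cites a proposition of \cite{BRR4} for this step), but is the same in spirit.

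Where you genuinely diverge from the paper is in the last conclusion, $\Re(\lambda)>\lambda_1^R$ for every other eigenvalue $\lambda$ of~\eqref{eigborn}. The paper deduces it directly from Theorem~\ref{th KR} applied to $T$, i.e., from $|\mu|<r(T)$ (equivalently $\Re\mu<r(T)$) for every nonprincipal eigenvalue $\mu$ of $T$. You instead give a self-contained argument at the level of the original operator: Kato's inequality turns a complex eigenfunction into a non-trivial non-negative subsolution $(|u|,|v_1|,|v_2|)$ with parameter $\Re\lambda$, compatible with the Dirichlet and exchange conditions, and a sliding comparison against $(u_R,v_{1,R},v_{2,R})$ using Lemma~\ref{int} forces $\Re\lambda\ge\lambda_1^R$, with the equality case pinned down by rigidity in Kato's inequality. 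This is worth having: the passage from the resolvent statement $|\mu|<r(T)$, with $\mu=(\lambda+M)^{-1}$, to $\Re\lambda>\lambda_1^R$ is not automatic — the map $\mu\mapsto\mu^{-1}-M$ does not send the punctured disc $\{0<|\mu|<r(T)\}$ into the half-plane $\{\Re\lambda>\lambda_1^R\}$ — so your Kato/sliding step supplies an explicit argument for a step the paper leaves implicit. The usual caveats apply (Kato's inequality holds distributionally, one must handle the nodal set of $|u|$, and the equality/rigidity analysis should be carried out carefully near $\{u=0\}$ and at $\{y=0\}$), but these are standard and you flag them.
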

This proposition follows from the Krein-Rutman theorem. Because its application is not standard in the present framework, 
due to the coupling condition through the boundary and the lack of regularity of $\O_R$, we provide the details it in the Appendix.

We will apply the Krein-Rutman theorem in the following space:
\begin{multline}\label{space}
\mc C_R := \{ (u,v_1,v_2) \in C^{1}(\ol{I_R})\times (C^{1}(\ol{\O_R}))^2 \ : \ u=0 \text{ on } \partial I_R \\  \text{ and } v_1 = 0, \ v_2 = 0 \text{ on } \partial \O_R \backslash( I_R \times \{ 0 \})   \}.
\end{multline}
A key step in applying the Krein-Rutman theorem, that will also be useful in the sequel, 
is the following strong positivity property for the system \eqref{eigborn}.
%
\begin{lemma}\label{int}
Let $R>0$, $\lambda \in \R$ and $(u,v_1,v_2) \geq 0$, $(u,v_1,v_2) \not\equiv (0,0,0)$ be a supersolution of \eqref{eigborn}. 
Then, for every $(\phi,\psi_1,\psi_2) \in \mc C_R$, there exists $\e>0$ such that
$$
(u,v_1,v_2) \geq \e (\phi,\psi_1,\psi_2).
$$
\end{lemma}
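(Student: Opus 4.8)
The statement is a strong positivity (or ``interior sphere'')-type property: any nontrivial nonnegative supersolution of the truncated eigenproblem \eqref{eigborn} dominates a small multiple of any fixed element of $\mc C_R$. The plan is to first establish that $u$, $v_1$, $v_2$ are \emph{strictly positive in the interior of their respective domains}, and then to control their behaviour near the boundary via Hopf-type estimates, so that the comparison with $(\phi,\psi_1,\psi_2)$ holds up to the boundary.

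First I would show strict interior positivity. Adding a large constant $k>0$ to the zeroth-order coefficients (so that $-L_i$ and $-L_0$ become proper, i.e.\ have nonnegative zeroth-order part on $\ol\O_R$, $\ol I_R$), each of $u,v_1,v_2$ is a nonnegative supersolution of a scalar elliptic equation. By the strong maximum principle, each is either identically zero or strictly positive in the interior of its domain. The coupling then propagates positivity: if, say, $v_1\equiv 0$ on $\O_R$, then the boundary condition $B_1(u,v_1)\le 0$ forces $d_1\partial_y v_1|_{y=0}\ge \mu_1 u\ge 0$; but since $v_1\equiv 0$ on $\{y=0\}$ and $v_1\ge 0$ above, $\partial_y v_1|_{y=0}\le 0$, so $u\equiv 0$ on $I_R$, hence also $\nu_1 v_1|_{y=0}+\nu_2 v_2|_{y=0}\le 0$ from $L_0(u,v_1,v_2)+\lambda u\le 0$ with $u\equiv0$, giving $v_2|_{y=0}\equiv 0$, and then as before $v_2\equiv 0$. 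This contradicts $(u,v_1,v_2)\not\equiv(0,0,0)$. Conversely, if $u\equiv 0$, the road equation forces $\nu_1 v_1|_{y=0}+\nu_2 v_2|_{y=0}\le 0$, so $v_i|_{y=0}\equiv 0$, and then $B_i$ forces $\partial_y v_i|_{y=0}\ge 0$ while nonnegativity of $v_i$ forces $\le 0$; combined with the Hopf lemma applied to $v_i$ (which is a supersolution vanishing at an interior boundary point $\{y=0\}$ unless $v_i\equiv 0$), we again get $v_i\equiv 0$. So all three functions are strictly positive in their open domains.

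Next I would handle the boundary. On $\partial I_R$ and on $\partial\O_R\setminus(I_R\times\{0\})$, both $(u,v_1,v_2)$ and $(\phi,\psi_1,\psi_2)$ satisfy Dirichlet conditions, so the relevant comparison is between normal derivatives: by the Hopf lemma (valid here since near these boundary portions $\O_R$ satisfies an interior ball condition away from the corner, and $u,v_i$ are positive supersolutions), $\partial_\nu u<0$, $\partial_\nu v_i<0$ strictly on the relevant smooth boundary pieces, while the $C^1$ functions $\phi,\psi_i$ have bounded normal derivatives; a standard argument then gives $\e_1>0$ with $u\ge \e_1\phi$ near $\partial I_R$ and $v_i\ge\e_1\psi_i$ near $\partial\O_R\setminus(I_R\times\{0\})$. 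Away from those boundary pieces, i.e.\ on a compact subset of $\O_R\cup(I_R\times\{0\})$ and of $\ol{I_R}$ minus neighborhoods of $\pm R$, strict positivity of $u,v_i$ (including at $y=0$, where we still need $v_i|_{y=0}>0$) gives a uniform lower bound, so a further $\e_2>0$ works there. The positivity of $v_i$ up to the portion $\{y=0\}$ of the boundary requires a separate Hopf argument at $y=0$: since $-d_i\partial_y v_i|_{y=0}=\mu_i u-\nu_i v_i|_{y=0}$ with $u>0$, if $v_i(x_0,0)=0$ for some interior $x_0\in I_R$ then Hopf gives $\partial_y v_i(x_0,0)>0$, contradicting $v_i\ge 0$; hence $v_i>0$ on $I_R\times\{0\}$ too. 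Taking $\e=\min\{\e_1,\e_2\}$ concludes.

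The main obstacle is the corner of $\O_R$ where the Dirichlet portion of the boundary meets the line $\R\times\{0\}$: the Hopf lemma is not directly available there, and one must argue that the comparison still holds in a neighborhood of the corner. I would deal with this as in the scalar theory by using that $\phi,\psi_i\in C^1(\ol\O_R)$ vanish on the Dirichlet part, so $\psi_i$ is $O(\dist(\cdot,\partial\O_R\setminus(I_R\times\{0\})))$ near the corner, while $v_i$, being a positive supersolution, can be bounded below near the corner by a barrier comparable to the same distance (e.g.\ via a reflected barrier across $\{y=0\}$, exploiting the Robin condition there, as in the reflection argument announced in the paper); this yields the needed lower bound $v_i\ge\e\psi_i$ uniformly up to the corner. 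The remaining estimates are the routine scalar Hopf/compactness arguments sketched above.
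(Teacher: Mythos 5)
Your proposal is correct and follows essentially the same route as the paper's proof: strict interior positivity via the strong maximum principle plus propagation through the coupling, Hopf at the Dirichlet portions (with a separate Hopf argument on $I_R\times\{0\}$), the reflection/conjugation trick across $\{y=0\}$ to handle the corners $(\pm R,0)$, and a compactness argument to assemble the global lower bound. Two small inaccuracies worth fixing: several of your sign manipulations involving $B_i\le 0$ and $\partial_y v_i|_{y=0}$ are reversed (though they cancel and your conclusions survive), and a nonnegative supersolution need not \emph{equal} zero on the Dirichlet portion of the boundary—it is only $\ge 0$ there, which in fact makes the comparison with $(\phi,\psi_1,\psi_2)$ easier rather than harder.
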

This lemma is proved in the Appendix. It provides the strong maximum principle for our systems. Namely, a non-negative supersolution of \eqref{lin} or of \eqref{eigborn} is either~$(0,0,0)$ or it is positive in $\R$ and $\R \times [0, +\infty)$, or in $I_R$ and $\Omega_R \cup (I_R \times \{ 0\})$ respectively. In particular, the eigenfunctions $(u_R, v_{1,R}, v_{2,R})$ given by Proposition \ref{propKR} are strictly positive in $I_R$ and $\overline\O_R\setminus\partial B_R$ respectively.

Let us show that the definition \eqref{fgpe} of the generalized principal eigenvalue coincides with the principal eigenvalue $\lambda_{1}^R$ when restricting the equations to $I_R$ and~$\O_R$.

\begin{prop}\label{varform}
Let $R>0$. Then, $\l^R$ coincides with the right-hand side of \eqref{fgpe} restricted to $(u,v_1,v_2)$ in $W^{2,p}(I_R)\times (W^{2,p}(\O_R))^2$.

\end{prop}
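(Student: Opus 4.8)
\textbf{Proof plan for Proposition~\ref{varform}.}
Denote by $\Lambda^R$ the right-hand side of \eqref{fgpe} with the competitors restricted to $(u,v_1,v_2)\in W^{2,p}(I_R)\times(W^{2,p}(\O_R))^2$ subject to $B_i(u,v_i)\le0$ (and no boundary condition is imposed on the truncation part, only the sign constraints on $L_0+\lambda u$, $L_i+\lambda v_i$, $B_i$). The plan is to prove the two inequalities $\lambda_1^R\le\Lambda^R$ and $\lambda_1^R\ge\Lambda^R$ separately. The first one is immediate: the principal eigenfunction triplet $(u_R,v_{1,R},v_{2,R})$ furnished by Proposition~\ref{propKR} is non-negative, not identically zero, and satisfies $L_0(u_R,v_{1,R},v_{2,R})+\lambda_1^R u_R=0$, $L_i(v_{i,R})+\lambda_1^R v_{i,R}=0$, $B_i(u_R,v_{i,R})=0$, hence it is an admissible test triplet for $\Lambda^R$ with $\lambda=\lambda_1^R$; therefore $\lambda_1^R\le\Lambda^R$.

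For the reverse inequality $\Lambda^R\le\lambda_1^R$, I would take any $\lambda\in\R$ admitting a test triplet $(u,v_1,v_2)\ge0$, $\not\equiv(0,0,0)$, with $L_0(u,v_1,v_2)+\lambda u\le0$, $L_i(v_i)+\lambda v_i\le0$, $B_i(u,v_i)\le0$, and show $\lambda\le\lambda_1^R$. Consider the adjoint (or rather the transpose) eigenproblem of \eqref{eigborn}: by the Krein--Rutman machinery of the Appendix, the transposed system has the same principal eigenvalue $\lambda_1^R$ and a positive principal eigenfunction triplet $(u_R^\ast,v_{1,R}^\ast,v_{2,R}^\ast)\in\mc C_R$ with, say, $u_R^\ast>0$ in $I_R$ and $v_{i,R}^\ast>0$ in $\overline\O_R\setminus\partial B_R$. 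The idea is to pair the differential inequalities satisfied by $(u,v_1,v_2)$ against $(u_R^\ast,v_{1,R}^\ast,v_{2,R}^\ast)$: multiply $-L_0(u,v_1,v_2)-\lambda u\ge0$ by $u_R^\ast$ and integrate over $I_R$, multiply $-L_i(v_i)-\lambda v_i\ge0$ by $v_{i,R}^\ast$ and integrate over $\O_R$, and sum. Integrating by parts twice, the second-order and first-order terms transfer onto the adjoint eigenfunctions; the boundary terms produced at $y=0$ combine, using both the inequality $B_i(u,v_i)\le0$ tested against the (positive) $v_{i,R}^\ast|_{y=0}$ and the adjoint boundary conditions satisfied by $(u_R^\ast,v_{i,R}^\ast)$; and the boundary terms on $\partial\O_R\setminus(I_R\times\{0\})$ and at $x=\pm R$ vanish because $v_{i,R}^\ast$ and $u_R^\ast$ vanish there (here one uses that $u,v_i$ need only be in $W^{2,p}$, so the boundary traces make sense, and no sign issue arises since the adjoint eigenfunctions vanish). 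The upshot is an inequality of the form $(\lambda_1^R-\lambda)\big(\int_{I_R}u\,u_R^\ast+\sum_i\int_{\O_R}v_i\,v_{i,R}^\ast\big)\ge0$, and since the quantity in parentheses is strictly positive (as $(u,v_1,v_2)\not\equiv0$, non-negative, and, by the strong maximum principle of Lemma~\ref{int} applied to the supersolution $(u,v_1,v_2)$, actually positive, while the adjoint eigenfunctions are positive in the interior), we conclude $\lambda\le\lambda_1^R$. Taking the supremum over admissible $\lambda$ gives $\Lambda^R\le\lambda_1^R$.

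The main obstacle I anticipate is the careful bookkeeping of the boundary integrals at the corner points $(\pm R,0)$, where $\O_R$ is non-smooth: one must justify the integration by parts on the non-smooth domain $\O_R$ for $W^{2,p}$ functions with $p>2$ (so that traces and the divergence theorem are valid), and check that the corner contributions are genuinely absent because the adjoint eigenfunctions vanish on the circular part of $\partial\O_R$ down to the corner. An alternative, perhaps cleaner route avoiding the adjoint problem altogether is to use the strong positivity of Lemma~\ref{int} directly: given a test triplet for $\lambda$, Lemma~\ref{int} gives $(u,v_1,v_2)\ge\e(u_R,v_{1,R},v_{2,R})$ for some $\e>0$; one then sets $\t\e:=\sup\{\e>0:(u,v_1,v_2)\ge\e(u_R,v_{1,R},v_{2,R})\}$, observes $(u,v_1,v_2)-\t\e(u_R,v_{1,R},v_{2,R})$ is a non-negative supersolution of \eqref{eigborn} with eigenvalue parameter that touches zero somewhere, so by Lemma~\ref{int} it must vanish identically; substituting $(u,v_1,v_2)=\t\e(u_R,v_{1,R},v_{2,R})$ into the differential inequalities and comparing with the equalities satisfied by the eigenfunction forces $(\lambda_1^R-\lambda)u_R\le0$ etc., whence $\lambda\le\lambda_1^R$. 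I would present this second argument as the primary one, since it relies only on results already established (Proposition~\ref{propKR} and Lemma~\ref{int}) and sidesteps the adjoint problem and the delicate corner analysis.
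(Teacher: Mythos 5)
Your preferred route --- the one using Lemma~\ref{int} and a maximal-$\e$ argument --- is indeed the route the paper takes, and the first inequality ($\lambda_1^R \le \Lambda^R$ via plugging in the principal eigenfunctions triplet) is identical. The adjoint/pairing argument you sketch first is a legitimate classical alternative but is not what the paper uses, and your worry about the corner points is well placed; the paper sidesteps all of that.

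However, your Lemma-\ref{int}-based argument has a genuine gap in the step ``the difference touches zero somewhere, so by Lemma~\ref{int} it must vanish identically.'' Taking the supremum $\t\e := \sup\{\e>0 : (u,v_1,v_2)\ge\e(u_R,v_{1,R},v_{2,R})\}$ does not by itself force the difference to vanish at an interior point: since $u_R$ and $v_{i,R}$ vanish on the truncation boundary, the binding constraint can sit there, and ruling that out would require a Hopf-lemma comparison of normal derivatives that you have not made. In fact, under the assumption one should be working with --- $\lambda>\lambda_1^R$, towards a contradiction --- the difference is provably \emph{not} identically zero: if it were, then $(u,v_1,v_2)=\t\e(u_R,v_{1,R},v_{2,R})$, and substituting into $L_0(u,v_1,v_2)+\lambda u\le 0$ gives $(\lambda-\lambda_1^R)\t\e\,u_R\le0$, i.e.\ $\lambda\le\lambda_1^R$, contradicting $\lambda>\lambda_1^R$. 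This is exactly the observation the paper makes. The correct closing move is then the opposite of yours: since the difference is a non-negative, \emph{nontrivial} supersolution of \eqref{eigborn} (one checks $L_0(\t u,\t v_1,\t v_2)+\lambda\t u\le -(\lambda-\lambda_1^R)\t\e u_R\le 0$, etc.), a second application of Lemma~\ref{int} yields $(\t u,\t v_1,\t v_2)\ge\t\e'(u_R,v_{1,R},v_{2,R})$ for some $\t\e'>0$, which contradicts the maximality of $\t\e$. In short: you have the right tools and the right pieces of the computation, but you should run the argument by contradiction and use Lemma~\ref{int} a second time to contradict maximality, rather than asserting that the supremum forces an interior zero.
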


\begin{proof}
Let $\overline{\lambda}$ denote the right hand side of \eqref{fgpe} restricted to $(u,v_1,v_2)$ in $W^{2,p}(I_R)\times (W^{2,p}(\O_R))^2$. Then, using the principal eigenfunctions $(u_{R},v_{1,R},v_{2,R})$ of \eqref{eigborn} provided by Proposition \ref{propKR} as ``test functions" in \eqref{fgpe}, we readily get 
$$
\overline{\lambda}\geq \lambda_{1}^{R}.
$$
To prove the equality, assume by contradiction that  $\overline{\lambda}>\lambda_{1}^{R}$ and choose $\lambda \in ( \lambda_{1}^{R} ,\overline{\lambda})$ so that we can find $(u,v_1,v_2)\geq (0,0,0)$, $(u,v_1,v_2)\not\equiv (0,0,0)$ such that $L_{0}(u,v_1,v_2)+\lambda u \leq 0$, $L_{i}(v_i)+\lambda v_i \leq 0 $ and $B_i(u,v_i) \leq 0$, for $i=1,2$. In other words, $(u,v_1,v_2)$ is a supersolution of \eqref{eigborn}. Then Lemma \ref{int} implies that there is $\e>0$ such that 
$$
(u,v_1,v_2) \geq \e (u_R,v_{1,R},v_{2,R}).
$$
We let $\e^{\star}$ denote the largest $\e>0$ such that this inequality holds true. We define
$$
(\tilde{u}, \tilde{v}_1,\tilde{v}_2) := (u - \e^{\star}u_{R}, v_1 - \e^{\star}v_{1,R}, v_2 - \e^{\star}v_{2,R}) \geq (0,0,0).
$$
Observe that $(\tilde{u}, \tilde{v}_1,\tilde{v}_2)  \not\equiv (0,0,0)$ because $\lambda>\lambda_{1}^{R}$. Then, $(\tilde{u}, \tilde{v}_1,\tilde{v}_2) $ is still a supersolution of \eqref{eigborn}. Owing to Lemma \ref{int}, we find $\t \e >0$ so that $(\tilde{u}, \tilde{v}_1,\tilde{v}_2) > \t \e  (u_R,v_{1,R},v_{2,R})$, but this would contradict the definition of $\e^{\star}$.
\end{proof}

If $c=c_i=0$ in \eqref{lin} (i.e., there are no drift-terms in the system), we can derive the following alternative formula for $\lambda_{1}^{R}$.

\begin{prop}\label{variational}
Assume that $c=c_i=0$. Then the principal eigenvalue $\lambda_{1}^{R}$ for~\eqref{eigborn} satisfies
\begin{equation}\label{eqvariational}
\lambda_{1}^{R} = \min_{\substack{(u,v_1,v_2) \in \mathcal{H}_{R}\\ (u,v_1,v_2)\not\equiv(0,0,0)}} \frac{    \int_{I_R}D \vert u^{\prime} \vert^{2}  + \sum_{i=1,2}\left( \frac{\nu_i}{\mu_i}\int_{\Omega_{R}} \left( d_i\vert \nabla v_i \vert^{2} - a_i v^{2} \right) +  \frac{1}{\mu_i}\int_{I_R}(\mu_i u- \nu_i v_i\vert_{y=0})^{2} \right)  }{ \int_{I_R}u^{2} +\sum_{i=1,2}  \frac{\nu_i}{\mu_i} \int_{\Omega_{R}}v_i^{2} }
\end{equation}
where \footnote{The space $H^{1}_{0}(\Omega_{R}\cup (I_{R}\times\{0\}))$ is the completion in $H^1(\O_R)$ of functions in $C^{\infty}(\R^2)$ with support in $\Omega_{R}\cup (I_{R}\times\{0\})$. In particular, they do not need to vanish on $I_R \times \{ 0 \}$.}
$$
\mathcal{H}_{R} :=   H^{1}_{0}(I_R) \times \Big( H^{1}_{0}(\Omega_{R}\cup (I_{R}\times\{0\}))\Big)^2.
$$
%

\end{prop}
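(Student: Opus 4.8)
\textbf{Proof proposal for Proposition~\ref{variational}.}

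The plan is to recognize the right-hand side of \eqref{eqvariational} as the Rayleigh quotient of a self-adjoint, coercive bilinear form on the Hilbert space $\mathcal{H}_R$ (with the weighted $L^2$ inner product appearing in the denominator), and to identify the Euler--Lagrange system of the associated minimization problem with the variational (weak) formulation of \eqref{eigborn} when $c=c_i=0$. Concretely, I would introduce the bilinear form
$$
a\big((u,v_1,v_2),(\phi,\psi_1,\psi_2)\big) := \int_{I_R} D\, u'\phi' + \sum_i \frac{\nu_i}{\mu_i}\int_{\Omega_R}\big(d_i\nabla v_i\cdot\nabla\psi_i - a_i v_i\psi_i\big) + \sum_i \frac{1}{\mu_i}\int_{I_R}(\mu_i u - \nu_i v_i|_{y=0})(\mu_i\phi - \nu_i\psi_i|_{y=0}),
$$
and the weighted scalar product $\langle(u,v_1,v_2),(\phi,\psi_1,\psi_2)\rangle := \int_{I_R}u\phi + \sum_i\frac{\nu_i}{\mu_i}\int_{\Omega_R}v_i\psi_i$. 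The quotient in \eqref{eqvariational} is then $a(U,U)/\langle U,U\rangle$ with $U=(u,v_1,v_2)$.

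The argument would proceed in four steps. \emph{Step 1 (functional setting).} Note that $\mathcal{H}_R$, with norm $\|U\|^2 = \int_{I_R}|u'|^2 + \sum_i\int_{\Omega_R}|\nabla v_i|^2 + \langle U,U\rangle$, is a Hilbert space, that the embedding $\mathcal{H}_R \hookrightarrow L^2$-weighted is compact (by Rellich, using $p>2$ and the trace inequality for the boundary term, exactly as invoked after \eqref{space}), and that $a$ is continuous on $\mathcal{H}_R$ and bounded below: since $\|a_i\|_{L^\infty}<\infty$, one has $a(U,U) \geq c_0\|U\|^2 - C_0\langle U,U\rangle$ for suitable constants, so $a + (C_0+1)\langle\cdot,\cdot\rangle$ is coercive. \emph{Step 2 (existence of a minimizer).} By the direct method (a minimizing sequence is bounded in $\mathcal{H}_R$ by coercivity, hence converges weakly in $\mathcal{H}_R$ and strongly in the weighted $L^2$ space by compactness; $a$ is weakly lower semicontinuous as a sum of a convex continuous quadratic part and a strongly continuous part), the infimum $\Lambda$ is attained at some $U^\star=(u^\star,v_1^\star,v_2^\star)\not\equiv 0$. \emph{Step 3 (Euler--Lagrange).} Writing $\frac{d}{dt}\big|_{t=0}\frac{a(U^\star+t\Phi,U^\star+t\Phi)}{\langle U^\star+t\Phi,U^\star+t\Phi\rangle}=0$ for all $\Phi\in\mathcal{H}_R$ gives $a(U^\star,\Phi) = \Lambda\langle U^\star,\Phi\rangle$. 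Testing against $\Phi$ supported away from $\partial\Omega_R$ and integrating by parts recovers, in the weak then (by elliptic regularity, $W^{2,p}_{loc}$) strong sense, the equations $-L_i(v_i^\star)=\Lambda v_i^\star$ in $\Omega_R$, $-L_0(u^\star,v_1^\star,v_2^\star)=\Lambda u^\star$ in $I_R$; testing against general $\Phi$ and collecting the boundary terms yields the Robin conditions $B_i(u^\star,v_i^\star)=0$; membership in $\mathcal{H}_R$ gives the Dirichlet conditions. Thus $\Lambda$ is an eigenvalue of \eqref{eigborn} with eigentriple $U^\star$. \emph{Step 4 (identification with $\lambda_1^R$).} Since $|U^\star|=(|u^\star|,|v_1^\star|,|v_2^\star|)$ has the same Rayleigh quotient (the form $a$ does not increase, as $|\nabla|w||=|\nabla w|$ a.e. and $(\mu_i|u|-\nu_i|v_i|)^2\le(\mu_i u-\nu_i v_i)^2$ when $u,v_i$ have the same sign — here one uses the structure more carefully, or simply that a minimizer may be taken nonnegative by the same reflection/positivity argument), the minimizer can be chosen $\geq 0$; by the strong positivity Lemma~\ref{int} it is in fact strictly positive. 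An eigenvalue of \eqref{eigborn} admitting a positive eigentriple must equal $\lambda_1^R$ by the uniqueness part of Proposition~\ref{propKR}. Hence $\Lambda=\lambda_1^R$, which is \eqref{eqvariational}.

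The main obstacle I anticipate is \emph{Step 4}, specifically showing the minimizer can be taken nonnegative: unlike a single equation, here replacing $(u,v_1,v_2)$ by $(|u|,|v_1|,|v_2|)$ does not obviously decrease the coupling term $\int_{I_R}(\mu_i u-\nu_i v_i|_{y=0})^2$, since $u$ and $v_i|_{y=0}$ could have opposite signs. I would handle this either by invoking that, by Proposition~\ref{propKR}, $\lambda_1^R$ is characterized as the unique eigenvalue with a positive eigenfunction and its algebraic simplicity forces every eigentriple for $\lambda_1^R$ to be a scalar multiple of $(u_R,v_{1,R},v_{2,R})$, and then showing directly that $\Lambda=\lambda_1^R$ by a two-sided comparison (one inequality from plugging $(u_R,v_{1,R},v_{2,R})$ into the quotient and integrating the equation against it, the other from Step 3 together with the fact that $\lambda_1^R$ is the \emph{smallest} real eigenvalue by Proposition~\ref{propKR}); or, alternatively, by a direct sign analysis showing that any minimizer automatically has $u^\star$ and $v_i^\star|_{y=0}$ of the same sign on $I_R$ (otherwise flipping the sign of one component on a subdomain and using the equations strictly decreases the quotient, contradicting minimality). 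The remaining steps are routine applications of the direct method and elliptic regularity already cited in the paper.
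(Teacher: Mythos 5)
Your proposal follows the paper's proof closely: the direct method (with trace compactness for the coupling term) to obtain a minimizer, the Euler--Lagrange computation showing the minimizer is an eigentriple of \eqref{eigborn} for some real $\lambda$, and then identification $\lambda=\lambda_1^R$. For that last step the paper uses precisely your fallback alternative (a) — plugging the principal eigenfunctions triplet $(u_R,v_{1,R},v_{2,R})$ into the quotient gives $\lambda\le\lambda_1^R$, and the last assertion of Proposition~\ref{propKR} (every eigenvalue of \eqref{eigborn} has real part $\ge\lambda_1^R$) gives $\lambda\ge\lambda_1^R$ — so the positivity/absolute-value difficulty you flag in Step 4 never arises and can simply be dropped.
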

This result is an adaptation to our model of the classical Rayleigh-Ritz formula, see \cite[Theorem 8.37]{GT} for instance. The usual way of proving the Rayleigh-Ritz formula is to use the spectral theorem, which provides an orthogonal basis. Here, it is more convenient to use a direct approach.
\begin{proof}[Proof of Proposition \ref{variational}]
We set $\beta_i := \frac{\nu_i}{\mu_i}$ and $\gamma_i := \frac{1}{\mu_i}$. Consider the following functional:
$$
\mc E(u,v_1,v_2) :=   \int_{I_R}D \vert u^{\prime} \vert^{2}  + \sum_{i=1,2}\beta_i \int_{\Omega_{R}} \left( d_i \vert \nabla v_i \vert^{2} -a_i v^{2} \right) + \sum_{i=1,2} \gamma_i \int_{I_R}(\mu_i u- \nu_i v_i\vert_{y=0})^{2},
$$
acting over the space $\mc H_R$. We look for minimizers of $\mc E$ over the set
$$
\mc S := \left\{ (u,v_1,v_2) \in \mc H_R \ : \  \int_{I_R}u^{2} + \sum_{i=1,2} \beta_i \int_{\Omega_{R}}v_i^{2} =1 \right\}.
$$

\medskip 
\emph{Step 1. Existence of minimizers.}\\
Consider a minimizing sequence $((u_n,v_{1,n}, v_{2,n}))_{n\in \N}$ of $\mc E$ over $\mc S$. Observe that the $L^2$ norm of functions in $\mathcal{S}$ are uniformly bounded. One then readily checks that the $H^{1}$ norm of $u_n$ and $v_{1,n},v_{2,n}$ are bounded independently of $n$. Therefore, up to extraction, the sequences $(u_n)_{n\in \N}$ and $(v_{1,n})_{n\in \N}$, $(v_{2,n})_{n\in \N}$ converge weakly in $H^{1}$ and strongly in $L^2$ (because $H^1$ is compactly imbedded in $L^2$) to some limits $
u_\infty$ and $v_{1,\infty}, v_{2,\infty}$ respectively, with  $(u_\infty, v_{1,\infty}, v_{2,\infty}) \in \mc S$. Moreover, because the $H^{1}$ norm is lower-semicontinuous for the weak $H^1$ convergence, we have
\begin{multline*}
\int_{I_R}D \vert u_{\infty}^{\prime} \vert^{2}  +\sum_{i=1,2} \beta_i \int_{\Omega_{R}} \left( d_i\vert \nabla v_{i,\infty} \vert^{2} -a_iv_{i,\infty}^{2} \right) \leq \\ \liminf_{n \to +\infty}\left(\int_{I_R}D \vert u_n^{\prime} \vert^{2}  + \sum_{i=1,2}\beta_i \int_{\Omega_{R}} \left( d_i\vert \nabla v_{i,n} \vert^{2} - a_i v_{i,n}^{2} \right)\right).
\end{multline*}
Now, it remains to observe that, for $i=1,2$,
$$
 \int_{I_R}(\mu_i u_n- \nu_i v_{i,n}\vert_{y=0})^{2} \underset{n \to +\infty}{\longrightarrow}  \int_{I_R}(\mu_i u_{\infty}- \nu_i v_{i,\infty}\vert_{y=0})^{2},
$$
Indeed, by a classical result of Gagliardo \cite{G}, the trace operator is continuous from $H^1(\O_R)$ to $H^{\frac{1}{2}}(\partial \O_R)$, and the space $H^{\frac{1}{2}}$ is compactly imbedded in $L^{2}$ by the fractional Rellich-Kondrachov theorem, see, e.g, \cite[Theorem 9.16]{Brezis}. Therefore, $(u_\infty, v_{1,\infty},v_{2,\infty})$ is a minimizer of $\mc E$ over $\mc S$.

\medskip
\emph{Step 2. Equation satisfied by the minimizers.}\\
Now, it remains to prove that the minimizers of $\mc E$ over $\mc S$ are eigenvectors associated with the principal eigenvalue. To do so, we set $\alpha_i := \frac{\mu_i}{\mu_1 + \mu_2}$ and we compute the derivative of $\mc E$ in the direction of
a triplet $(\phi, \psi_1, \psi_2)$ of smooth test functions: 
\begin{multline*}
\lim_{\e \to 0}\frac{\mc E(u+\e\phi , v_1 + \e \psi_1, v_2 + \e \psi_2)  - \mc E(u,v_1,v_2)}{2\e}\\ 
 = \sum_i- \alpha_i \int_{I_R}D  u^{\prime\prime} \phi  - \beta_i\int_{\Omega_{R}} \left( d_i \Delta v_i  + a_i v_i \right)\psi_i - \beta_i\int_{I_R} d_i \psi_i \partial_{y}v_i \vert_{y=0} \\ + \gamma_i \int_{I_R}(\mu_i u- \nu_i v_i\vert_{y=0})(\mu_i \phi- \nu_i \psi_i\vert_{y=0}) \\
 =  \sum_i \int_{I_R}( -\alpha_i D  u^{\prime\prime} + \gamma_i \mu_i(\mu_i u  - \nu_i v_i\vert_{y=0})) \phi  + \beta_i\int_{\Omega_{R}} \left( -d_i \Delta v_i  - a_i v_i \right)\psi_i \\+\int_{I_R}( \beta_i d_i\partial_{\nu}v_i\vert_{y=0} -  \nu_i \gamma_i (\mu_i u +\nu_i v_i\vert_{y=0}))\psi_i\vert_{y=0}  \\
 =   \int_{I_R}(  -D  u^{\prime\prime} +\sum_i(\mu_i u  - \nu_i v_i\vert_{y=0})) \phi  + \sum_i \beta_i\int_{\Omega_{R}} \left( -d_i \Delta v_i  - a_i v_i \right)\psi_i \\+\sum_i \beta_i \int_{I_R}(  d_i\partial_{\nu}v_i\vert_{y=0} -   (\mu_i u +\nu_i v_i\vert_{y=0}))\psi_i\vert_{y=0}. \\
 =   -\int_{I_R} L_0(u,v_1,v_2) \phi  - \sum_i \beta_i\int_{\Omega_{R}} L_i (v_i)\psi_i -\sum_i \beta_i \int_{I_R}B_i (u,v_i)\psi_i\vert_{y=0}. 
\end{multline*}
We used the specific form of $\alpha_i, \beta_i, \gamma_i$ to get the last equality.

Now, if $(u,v_1,v_2)$ is a minimizer of $\mc E$ over $\mc S$, whose existence is guaranteed by the first step, the derivative of $\mc E$ needs to be collinear to the exterior normal to $\mc S$ at $(u,v_1,v_2)$, that is, there is $\lambda \in \R$ such that
$$
 -L_0(u,v_1,v_2) = \lambda u,
 \quad  B_i (u,v_i) = 0,
 \quad   -  \beta_i L_i (v_i) = \lambda \beta_i v_i ,\quad\text{for }i=1,2. 
$$
Therefore, the minimizer $(u,v_1,v_2)$ is an eigenfunction associated with the eigenvalue~$\lambda$. Plotting the principal eigenfunction $(u_R, v_{1,R}, v_{2,R})$ in the right-hand side of~\eqref{eqvariational} we find that $\lambda \leq \l^R$. It then follows from the last part of Proposition \ref{propKR} that $\lambda = \lambda_{1}^{R}$, whence the result. 
\end{proof}
\begin{req}
In the case where the two sides of the field are symmetric, that is, $d_1=d_2 = d$, $\mu_1= \mu_2 = \mu$, $\nu_1= \nu_2 =\nu$ and $a_1 \equiv a_2 \equiv a$, then  formula \eqref{eqvariational} can be written under the simpler form
$$
\lambda_{1}^{R} = \min_{\substack{(u,v) \in \tilde{\mathcal{H}}_{R} \\ (u,v) \not\equiv (0,0)}} \frac{    \mu \int_{I_R}D \vert u^{\prime} \vert^{2}  +  \nu \int_{\Omega_{R}} \left( d\vert \nabla v \vert^{2} - a v^{2} \right) + \int_{I_R}(\mu u- \nu v\vert_{y=0})^{2}   }{ \mu \int_{I_R}u^{2} +\nu \int_{\Omega_{R}}v^{2} }
$$
where
$$
\tilde{\mathcal{H}}_{R} :=   H^{1}_{0}(I_R) \times  H^{1}_{0}(\Omega_{R}\cup (I_{R}\times\{0\})).
$$
\end{req}

\subsection{Convergence of $\lambda_{1}^{R}$ to $\lambda_{1}$}\label{cv vp}

The existence of a generalized principal eigenfunctions triplet (\thm{gpef}) associated with $\l$ and the 
convergence of $\lambda_{1}^{R}$ to $\lambda_{1}$ (\thm{truncated}) are proved simultaneously.
We recall that the generalized principal eigenvalue $\l$ is defined by \eqref{fgpe}. We will make use of the Harnack inequality, Theorem \ref{Harnack}, whose proof is postponed to the next section.


\begin{proof}[Proofs of Theorems \ref{thm:gpef} and \ref{thm:truncated}] 
	The proofs are divided into several steps.
	
	\emph{Step 1. $ \lambda_{1} \leq \lim_{R \to +\infty}\lambda_{1}^{R}$.} \\
Proposition \ref{varform} implies that $\lambda_{1}^{R}$ is non-increasing with respect to $R$: if $R^{\prime}>R$, we can use the principal eigenfunctions $(u_{R^{\prime}},v_{1,R^{\prime}}, v_{2,R^{\prime}})$ associated with $\l^{R'}$ as ``test functions" in \eqref{fgpe} restricted to $\Omega_{R}$ and $I_R$ to get $\lambda_{1}^{R}\geq \lambda_{1}^{R^{\prime}}$. Likewise, we see that $\l^R \geq \l$, for every $R>0$. Thanks to Proposition \ref{bound lambda}, we see that the sequence $(\l^R)_{R>0}$ converges as $R$ goes to $+\infty$ to some limit $\ol{\lambda} \in \R$, that satisfies
%
$$
 \overline{\lambda} \geq \lambda_{1}.
$$
%
%
To prove the reverse inequality, we actually show that there is a triplet of positive functions $(u,v_1, v_2)$, with $u \in W^{2,p}_{loc}(\R)$ and $v_1, v_2 \in W^{2,p}_{loc}(\ol{\R \times \R^{+}})$, such that $-L_{0}(u,v_1,v_2)=\overline{\lambda}u$, $-L_{i}(v_i)=\overline{\lambda}v_i$ and $B_i(u,v_i) = 0$, for $i=1,2$. To do so, the idea is to consider the family of eigenfunctions $\left((u_{R},v_{1,R},v_{2,R})\right)_{R>0}$, provided by Proposition \ref{propKR}, and to extract a converging subsequence as $R$ goes to $+\infty$, using the $L^{p}$ elliptic estimates. To start with, let us normalize this sequence so that 
\begin{equation}\label{normalization}
u_{R}(0) + v_{1,R}(0,0) + v_{2,R}(0,0) = 1.
\end{equation}
The core of the proof consists in deriving the following estimates, for all $M>0$:
\begin{equation}\label{W u}
\| u_{R} \|_{W^{2,p}(I_M)} \leq C \quad \text{ for } R> M+2,
\end{equation}
and 
\begin{equation}\label{W v}
\| v_{i,R} \|_{W^{2,p}(\O_M)} \leq C  \quad \text{ for } R> M+2, \ i=1,2.
\end{equation}
In the whole proof, $C$ denotes a generic constant independent of $R$ (but depending of $M$).

\medskip
\emph{Step 2. Bound on $\| u_{R} \|_{W^{2,p}(I_M)}$.}\\
Let $M>0$ be given. From to our Harnack estimate, Theorem \ref{Harnack}, we know that there is a constant $C$, independent of $R$, such that, for any $R>M+2$,
\begin{equation}\label{Harnack 1}
\max\left\{ \sup_{I_{M+1}} u_{R} ,  \sup_{  \Omega_{M+1}} v_{1,R}, \sup_{  \Omega_{M+1}} v_{2,R} \right\} \leq C \min\left\{ \inf_{I_{M+1}} u_{R} , \inf_{\Omega_{M+1}} v_{1,R}, \inf_{\Omega_{M+1}} v_{2,R}\right\}\leq C.
\end{equation}
We can apply the interior $W^{2,p}$ estimates to $u_{R}$ on $I_{M} \subset I_{M+1}$, to get
$$
\|u_{R} \|_{W^{2,p}( I_{M})} \leq C(\| v_{1,R}\vert_{y=0} \|_{L^{p}(I_{M+1})} + \| v_{2,R}\vert_{y=0} \|_{L^{p}(I_{M+1})} +\| u_{R} \|_{L^{p}(I_{M+1})}).
$$ 
In view of the Harnack inequality \eqref{Harnack 1}, we derive \eqref{W u}.

\medskip
\emph{Step 3. Bound on $\| v_{i,R} \|_{W^{2,p}(\O_M)}$.}\\
Because of the non-classical boundary conditions, we cannot apply directly the $W^{2,p}$ estimates to $v_{1,R}, v_{2,R}$ on $\O_M$. We will make the boundary condition ``disappear" using a change of function and a reflexion across the road, as in the proof of Lemma~\ref{WH}. We derive the estimate for $v_{1,R}$, the case of $v_{2,R}$ being similar. First, denoting $L~:=~L_{1}~+~\lambda_{1}^{R}$, we have
\begin{equation*}
\left\{
\begin{array}{rll}
-L v_{1,R} &= 0 \quad &\text{ on }\ \O_{M+1},  \\
-d_1\partial_{y}v_{1,R}\vert_{y=0} + \nu_1 v_{1,R}\vert_{y=0} &= \mu_1 u_R \quad &\text{ on } \ I_{M+1}.
\end{array}
\right.
\end{equation*}
We define 
$$
\t v_{R} := v_{1,R} e^{-\frac{\nu_1}{d_1}y},
$$ 
and the conjugated operator 
$$
\t L w := e^{-\frac{\nu_1}{d_1}y}L(w e^{\frac{\nu_1}{d_1}y}),
$$
 so that
\begin{equation*}
\left\{
\begin{array}{rll}
-\t L \t v_R &= 0 \quad &\text{ on }\ \O_{M+1},  \\
-d_1\partial_{y}\t v_{R}\vert_{y=0}  &= \mu_1 u_R \quad &\text{ on }\ I_{M+1} .
\end{array}
\right.
\end{equation*}
%
Define
$$
w_{R}(x,y) := \t v_{R}(x,y) -   \frac{\mu_1}{d_1}u_R(x)y.
$$
We have
\begin{equation*}
\left\{
\begin{array}{rlr}
-\t Lw_{R} &=  \frac{\mu_1}{d_1} \t L (u_R(x) y) \quad &\text{ on }\ \O_{M+1}  \\
-\partial_{y}w_R\vert_{y=0}  &= 0 &\quad \text{ on }\ I_{M+1}.
\end{array}
\right.
\end{equation*}
We now have a Neumann-boundary problem. We extend $w_{R}$ and  $g := \frac{\mu_1}{d_1} \t L (u_R(x) y)$ by reflexion across the road, that is we set:
$$
\t w_R(x,y) := w_R(x,\vert y \vert )\quad \text{ and }\quad  \t g(x,y) := g(x,\vert y \vert)  \quad \text{ for }\ (x,y) \in B_{M+1}.
$$
Then, $\t w_R$ is a weak solution to
$$
-L^{\star} \t w_R = \t g \quad \text{ on } \ B_{M+1},
$$
where
$$
L^{\star} := d_1\Delta + 2 \nu_1 H(y) \partial_{y} +c\partial_{x} +a_1+\frac{\nu_1^2}{d_1} +\lambda_1^R,
$$ 
with $H(y) = 1$ if $y\geq0$ and $H(y)=-1$ if $y<0$. The operator $L^{\star}$ is elliptic with constant coefficients of the second order and bounded coefficients of the first and zeroth order. The $W^{2,p}$ regularity estimates applied to $\t w_R$ on $B_M $ then yield
$$
\| \t w_R \|_{W^{2,p}(B_M)} \leq C( \| \t w_R \|_{L^{\infty}(B_{M+1})} + \| \t g\|_{L^{p}(B_{M+1})}).
$$
Recalling the definition of $\t w_R$ and $\t g$, and observing that  the $L^p(B_{M+1})$ norm of $\t g$ is controlled by the norm of $u_R$, we have, for some $C$ independent of $R$,
$$
\| \t w_R \|_{W^{2,p}(B_M)} \leq C( \| u_{R} \|_{L^{\infty}(I_{M+1})} + \| v_{1,R} \|_{L^{\infty}(\O_{M+1})} + \|  u_R \|_{W^{2,p}(I_{M+1})}).
$$
From the second step and the Harnack inequality \eqref{Harnack 1}, we have
$$
\| \t w_R \|_{W^{2,p}(B_M)} \leq C.
$$
Now, recalling the definition of $\t w_R$ and using the triangular inequality, this reduces to
$$
\| v_{1,R} \|_{W^{2,p}(\O_M)} \leq C( \| w_{R} \|_{W^{2,p}(\O_M)} + \| u_{R} \|_{W^{2,p}(I_M)}) \leq C,
$$
and then \eqref{W v} holds.

\medskip
\emph{Step 4. Passing to the limit.} \\
Owing to \eqref{W u} and \eqref{W v}, the $W^{2,p}$ norms of $u_R$ and $v_{1,R}, v_{2,R}$ are bounded independently of $R$ on $\O_M$. Up to a diagonal extraction, the sequences $u_R$ and $v_{1,R}, v_{2,R}$ converge  to some $u_\infty \in W^{2,p}_{loc}(\R)$ and $v_{1,\infty}, v_{2,\infty} \in W^{2,p}_{loc}(\overline{\R\times \R^+})$ respectively, weakly in $W^{2,p}_{loc}(\R)$ and $W^{2,p}_{loc}(\ol{\R \times \R^{+}})$. Moreover, by Morrey's inequality, these convergences also hold in 
$C^{1}_{loc}(\R)$ and $C^{1}_{loc}(\ol{\R \times \R^{+}})$. 
It follows that
$$
-L_{0}(u_{\infty},v_{1,\infty}, v_{2,\infty})=\overline{\lambda}u, \quad -L_{i}(v_{i,\infty})=\overline{\lambda}v_i, \quad B_i(u_{\infty},v_{i,\infty}) = 0, \quad\text{for }i=1,2.
$$
We further know that $(u_{\infty},v_{1,\infty}, v_{2,\infty}) \geq (0,0,0)$ and that
$(u_{\infty},v_{1,\infty}, v_{2,\infty}) \not\equiv (0,0,0)$ thanks 
to the normalization \eqref{normalization}.
Using $(u_\infty , v_{1,\infty}, v_{2,\infty})$ as ``test functions" in~\eqref{fgpe}, we derive $\lambda_{1} \geq \overline{\lambda}$. 
Hence, $\overline{\lambda} = \lambda_{1}$.

This proves Theorem \ref{thm:truncated}. Furthermore, the strong positivity property of Lemma~\ref{int}
implies that $u_\infty , v_{1,\infty}, v_{2,\infty}$ are positive.
This means that $(u_\infty , v_{1,\infty}, v_{2,\infty})$ 
is a generalized principal eigenfunctions triplet associated with $\l$ in the sense of \thm{gpef}.
\end{proof}
We mention that some properties of the usual principal eigenfunctions do not hold true for the generalized principal eigenfunctions. For instance, it is a very striking fact that, for every $\lambda < \lambda_{1}$, one can find positive functions $(u,v_1,v_2)$ such that $-L_{0}(u,v_1,v_2)=\lambda u$, $-L_{i}(v_i)=\lambda v_i$ and $B_i(u,v_i) = 0$ for $i=1,2$ (this fact is proved in \cite{BR1, BR2} for equations, but the proof adapts easily to our framework), which is not possible when the domain is bounded: the principal eigenvalue is characterized as the smallest eigenvalue, and it is the only one associated with a positive eigenfunction.

\subsection{Continuity and monotonicity of the generalized principal eigenvalue}\label{m et c}
In this section, we use the Harnack inequality, Theorem \ref{Harnack} to show that $\l$ is a continuous and monotonous function of the parameters of the system. We prove Propositions \ref{continuity} and \ref{monotony}.
\begin{proof}[Proof of Proposition \ref{continuity}]
Let us show that $\l$ is a locally Lipschitz-continuous function of the parameter $d_1\in (0,+\infty)$. To do so, we fix $0<\underline{d}<\overline{d}$ and we consider two diffusion coefficients for the upper side of the field, $d_1, d'_1 \in (\underline{d}, \overline{d})$. Let $\l$ and $\lambda'_1$ denote the 
corresponding generalized principal eigenvalues, all the other parameters being fixed and equal.

	Let $(u,v_1,v_2)$ denote a (positive) generalized principal eigenfunctions triplet associated with $\lambda_{1}$, 
	in the sense of \thm{gpef}.
%
Let $(x,y)\in \R^{+} \times \R$. If $y\geq1$, we can apply the Schauder estimates and the usual Harnack estimate to $v_1$ on $B_{1}(x,y)$ to get
$$
 \vert \Delta v_1 (x,y) \vert \leq C\sup_{B_{1}(x,y)}v_1 \leq C v_1(x,y),
$$
where, here and in the sequel, $C$ denotes a constant independent of $(x,y)$ and of $d_1, d'_1$ (but that may depend on $\underline{d}, \overline{d}$). If $y < 1$, we can still apply the Schauder estimates to $v(\cdot +x , \cdot )$ on $\O_1$ and deduce
$$
\vert \Delta v_1(x,y) \vert \leq C (\sup_{\O_1} v_1 + \sup_{I_1}u).
$$
Therefore, owing to Theorem \ref{Harnack}, it holds true that 
$$
\vert \Delta v_1(x,y) \vert \leq C v_1(x,y).
$$
Hence, by the definition of $(u,v_1,v_2)$, we find that
$$
d'_1 \Delta v_1+c_1\partial_{x}v_1 +a_1v_1 + \lambda_{1} v_1 = (d'_1 - d_1) \Delta v_1 \leq  C \vert d_1 - d'_1 \vert   v_1.
$$
Similar arguments yield that
$$
d'_1 \partial_{y} v_1 \vert_{y=0} -\nu_1 v_1 \vert_{y=0} + \mu_{1}u \leq C \vert d_1 - d'_1 \vert u.
$$
Now, we denote $\t u := \frac{(\mu_{1} - C \vert d_1 - d'_1 \vert )}{\mu_1}u$. Up to decreasing $\vert d'_1 - d_1\vert$, we can assume that $0<\t u \leq u$. Therefore, denoting $ d'_2 := d_2$, the triplet $(\t u , v_1 , v_2)$ satisfies
\small
\begin{equation*}
\left\{
\begin{array}{lll}
D\t u^{\prime\prime} + c\t u^{\prime} + \sum_i \nu_i \frac{(\mu_{1} - C \vert d_1 - d'_1 \vert )}{\mu_1} v_i\vert_{y=0} - (\mu_1 + \mu_2) \t u+ \lambda_{1}\t u \leq 0, \quad &x \in  \mathbb{R}, \\
d'_i\Delta v_i + c_i\partial_{x}v_i + a_i v_i + (\lambda_{1} - C \vert d'_1 - d_1 \vert )v_i \leq 0, \quad &(x,y) \in   \mathbb{R}\times \mathbb{R}^{+},\\
d'_i\partial_{y}v_i\vert_{y=0} + \mu_i \t u  - \nu_i v_i\vert_{y=0} \leq 0, \quad &x \in  \mathbb{R}.
\end{array}
\right.
\end{equation*}
\normalsize
Using again Theorem \ref{Harnack}, we have (up to taking a larger constant $C$)
\small
\begin{equation*}
\left\{
\begin{array}{lll}
D\t u^{\prime\prime} + c\t u^{\prime} + \sum_i \nu_i  v_i\vert_{y=0} - (\mu_1 + \mu_2) \t u+ (\lambda_{1} - C \vert d'_1 - d_1 \vert )\t u \leq 0, \quad &x \in  \mathbb{R}, \\
d'_i\Delta v_i + c_i\partial_{x}v_i + a_i v_i + (\lambda_{1} - C \vert d'_1 - d_1 \vert )v_i \leq 0, \quad &(x,y) \in   \mathbb{R}\times \mathbb{R}^{+},  \\
d'_i\partial_{y}v_i\vert_{y=0} + \mu_i \t u  - \nu_i v_i\vert_{y=0} \leq 0, \quad &x \in  \mathbb{R}.
\end{array}
\right.
\end{equation*}
\normalsize
Now, taking $( \t u , v_1 , v_2)$ as a ``test function" in formula \eqref{fgpe} for $\lambda'_1$, we have, for some $C>0$,
$$
\lambda'_{1} \geq \lambda_{1} - \vert  d'_1 - d_1 \vert C.
$$
Exchanging the roles of $d'_1$ and $d_1$, we get
$$
\vert \lambda'_1  - \lambda_1  \vert \leq C \vert d'_1 - d_1 \vert,
$$
this proves that $\l$ is a locally Lipschitz-continuous function of the parameter $d_1$. The same argument works for the other parameters (using the $L^\infty$ norm for $a_1,a_2$).
\end{proof}
Before turning to the proof of Proposition \ref{monotony}, we 
derive the exponential decay of the generalized principal eigenfunctions triplet under the assumption
\eqref{condition strict}.

\begin{lemma}\label{exponential estimate}
Assume that $c=c_i=0$ and that \eqref{condition strict} holds.
Then, there are $\alpha, \beta, \gamma, \rho >0$ such that, for any $R>\rho$, 
the principal eigenfunctions triplet $(u_{R},v_{1,R}, v_{2,R})$ 
of the eigenproblem \eqref{eigborn} satisfies
\begin{multline*}
 (u_{R},v_{1,R}, v_{2,R}) \leq \\ \max\left\{\sup_{(-\rho , \rho)}u_{R},\sup_{\Omega_{\rho}}v_{1,R}, \sup_{\Omega_{\rho}}v_{2,R}\right\}e^{2(\alpha + \beta )\rho} (e^{- \alpha \vert x \vert } , \gamma e^{- \alpha \vert x \vert- \beta y},\gamma e^{- \alpha \vert x \vert- \beta y}).
\end{multline*}
\end{lemma}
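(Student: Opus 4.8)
The goal is to construct an exponentially decaying supersolution of the truncated eigenproblem \eqref{eigborn} and then invoke the comparison principle (Lemma~\ref{int}) to dominate $(u_R,v_{1,R},v_{2,R})$ by it. The starting point is condition \eqref{condition strict}: since $\lambda_1\le 0$ and $\limsup$ of the $a_i$ outside large balls is strictly below $-\lambda_1$, there exist $\rho>0$ and $\delta>0$ such that $a_i(x,y)+\lambda_1\le -\delta$ for all $(x,y)\in\R\times\R^+\setminus\Omega_\rho$ and $i=1,2$. I would then build the comparison function of the form
\[
(\overline u,\overline v_1,\overline v_2) := M\,e^{2(\alpha+\beta)\rho}\,\big(e^{-\alpha|x|},\ \gamma e^{-\alpha|x|-\beta y},\ \gamma e^{-\alpha|x|-\beta y}\big),
\]
where $M:=\max\{\sup_{(-\rho,\rho)}u_R,\sup_{\Omega_\rho}v_{1,R},\sup_{\Omega_\rho}v_{2,R}\}$, and where $\alpha,\beta,\gamma>0$ are constants to be chosen depending only on $D,d_i,\mu_i,\nu_i,\delta$ (and the sup-norms of the $a_i$), independently of $R$. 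The exponent $|x|$ is not smooth at $x=0$, but this is harmless: $e^{-\alpha|x|}$ is a (distributional / viscosity) supersolution of $-D w''+(\text{positive})w\ge 0$ since the kink at $0$ points the "wrong" way for a supersolution, i.e. it only helps. Alternatively one smooths $|x|$ near the origin at the cost of enlarging the prefactor, which is absorbed by the factor $e^{2(\alpha+\beta)\rho}$.

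The verification splits into two regions. \emph{Outside $\Omega_\rho$}, i.e. for $|x|\ge\rho$ or $y\ge\rho$ with $(x,y)\notin\Omega_\rho$: here I check that $(\overline u,\overline v_1,\overline v_2)$ is a supersolution of the equation with $\lambda=\lambda_1^R$. Plugging $e^{-\alpha|x|-\beta y}$ into $-d_i\Delta-a_i-\lambda_1^R$ gives (for $|x|>0$) the factor $-d_i(\alpha^2+\beta^2)-a_i-\lambda_1^R$; using $-a_i-\lambda_1^R\ge \delta-(\lambda_1^R-\lambda_1)$ and the fact that $\lambda_1^R\to\lambda_1$ monotonically from above (Theorem~\ref{thm:truncated}), so that $\lambda_1^R-\lambda_1$ is small for $R$ large, one sees this is $\ge 0$ once $\alpha,\beta$ are small enough; similarly for the road equation, using $-(\mu_1+\mu_2)\le 0$ and that $D\alpha^2$ is as small as we like. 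The boundary conditions $B_i(\overline u,\overline v_i)\le 0$ reduce, after differentiating in $y$, to requiring $d_i\beta\gamma+\nu_i\gamma \ge \mu_i$, which is arranged by choosing $\gamma$ large (this is why $\gamma$ appears). \emph{Inside $\Omega_\rho$}: here I do not claim the supersolution inequality; instead I simply observe that on $\overline{\Omega_\rho}$ we have $e^{-\alpha|x|-\beta y}\ge e^{-(\alpha+\beta)\rho}$, so $\overline v_i\ge M\,e^{(\alpha+\beta)\rho}\gamma\ge M$ (taking $\gamma\ge1$) and similarly $\overline u\ge M\,e^{(\alpha+\beta)\rho}\ge M\ge u_R$ on $(-\rho,\rho)$; thus $(\overline u,\overline v_1,\overline v_2)\ge(u_R,v_{1,R},v_{2,R})$ pointwise on $\overline{\Omega_\rho}$, on $[-\rho,\rho]$ and, by the Dirichlet condition $v_{i,R}=0$, trivially on $\partial\Omega_R\setminus(I_R\times\{0\})$.

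Finally I would conclude by a comparison argument. The difference $(\overline u-u_R,\overline v_1-v_{1,R},\overline v_2-v_{2,R})$ is a supersolution of the \emph{cooperative-through-the-boundary} system \eqref{eigborn} on the region $(I_R\setminus[-\rho,\rho])$ and $\Omega_R\setminus\Omega_\rho$, it is $\ge 0$ on the inner boundary (circle of radius $\rho$ and the segment $[-\rho,\rho]$) by the previous step, and $\ge 0$ on the outer Dirichlet boundary; hence by the strong maximum principle / comparison principle encoded in Lemma~\ref{int} (applied on the truncated outer region, whose relevant solvability is exactly of the type handled there), the difference is $\ge 0$ throughout. This yields the stated bound with $\alpha,\beta,\gamma,\rho$ as chosen, uniform in $R>\rho$. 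The main obstacle is the bookkeeping needed to choose $\alpha,\beta,\gamma$ \emph{simultaneously} compatible with: (i) the decay requirement $d_i(\alpha^2+\beta^2)\le\delta/2$, say, and the analogous smallness of $D\alpha^2$; (ii) the boundary inequality $\gamma(d_i\beta+\nu_i)\ge\mu_i$; and (iii) the requirement $\gamma\ge1$ so the "inside $\Omega_\rho$" domination holds — together with ensuring $R$ is large enough that $\lambda_1^R-\lambda_1<\delta/2$, which is why the constant $\rho$ (the threshold on $R$) enters. A secondary technical point is making the comparison principle on the \emph{unbounded-looking but actually truncated} exterior region rigorous; since we work on $\Omega_R$ with Dirichlet data on $\partial B_R$, this is a bounded-domain comparison and Lemma~\ref{int} applies after checking the sign conditions on all boundary pieces.
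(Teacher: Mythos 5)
There is a genuine gap in the choice of the prefactor $\gamma$. You correctly note that the exchange boundary conditions $B_i(\overline u,\overline v_i)\le 0$ force $\gamma(d_i\beta+\nu_i)\ge\mu_i$ for $i=1,2$, i.e.\ $\gamma$ is bounded \emph{below} by $\max_i\mu_i/(d_i\beta+\nu_i)$. But you then dismiss the road equation by saying it is fine because ``$-(\mu_1+\mu_2)\le 0$ and $D\alpha^2$ is as small as we like.'' This overlooks the coupling terms $\nu_1\overline v_1|_{y=0}+\nu_2\overline v_2|_{y=0}=(\nu_1+\nu_2)\gamma\,\overline u$ in $L_0$. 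The supersolution condition on the road reads
$$D\alpha^2+(\nu_1+\nu_2)\gamma-(\mu_1+\mu_2)+\lambda_1^R\le 0,$$
which is an \emph{upper} bound on $\gamma$. In the limit $\alpha,\beta\to 0$, $\lambda_1^R\to\lambda_1\le 0$, the two constraints together require
$$\max\left\{\frac{\mu_1}{\nu_1},\frac{\mu_2}{\nu_2}\right\}\lesssim\frac{\mu_1+\mu_2-\lambda_1}{\nu_1+\nu_2}.$$
When $\lambda_1=0$ the right-hand side is the mediant $(\mu_1+\mu_2)/(\nu_1+\nu_2)$, which lies strictly \emph{between} $\mu_1/\nu_1$ and $\mu_2/\nu_2$ unless these ratios are equal. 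So your single-$\gamma$ scheme fails whenever $\mu_1/\nu_1\neq\mu_2/\nu_2$; you can never take $\gamma$ ``large.''

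The paper avoids this by taking \emph{different} coefficients $\gamma_1,\gamma_2$ in the barrier, specifically $\gamma_i=\mu_i/(d_i\beta+\nu_i)$, so that the boundary conditions hold with equality and moreover $\nu_i\gamma_i-\mu_i=-d_i\beta\gamma_i=O(\beta)$, making the road supersolution constraint reduce to an $O(\beta)$-smallness condition that is compatible with $\alpha^2=O(\beta)$ and $\lambda_1^\rho\le 0$. (The lemma statement uses a single $\gamma$ only because one can set $\gamma:=\max\{\gamma_1,\gamma_2\}$ \emph{after} the barrier is built; it is not used as the barrier coefficient.) The rest of your plan is sound and close to the paper's: they also use a one-sided exponential $e^{\pm\alpha x}$ combined into $e^{-\alpha|x|}$, and in place of your direct pointwise domination on $\overline\Omega_\rho$ followed by comparison outside, they use a sliding argument (smallest $M$ with $M(U,V_1,V_2)\ge(u_R,v_{1,R},v_{2,R})$, then show the touching point must lie in $\overline\Omega_\rho\cup[-\rho,\rho]$); both are legitimate. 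Your remark that the kink of $e^{-\alpha|x|}$ has the favourable sign for a supersolution is also correct. The only substantive defect is the over-determined constraint on $\gamma$, which is fixed exactly by decoupling it into $\gamma_1,\gamma_2$.
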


\begin{proof} \emph{Step 1. Constructing a supersolution.}\\
For $\alpha,\beta,\gamma>0$, we define
$$
(U,V_1, V_2) := (e^{ \alpha x} , \gamma_1 e^{ \alpha x- \beta y} ,  \gamma_2 e^{ \alpha x- \beta y}).
$$
The idea is to use this triplet as a supersolution of \eqref{eigborn}. 
We consider the system

\begin{equation}\label{eigborn restrict}
\left\{
\begin{array}{rllrl}
-L_{0}(u,v_1,v_2) &= \lambda^{R}_{1}u, \quad &x \in \R\backslash (-\rho, \rho), \\
-L_{i}(v_i) &= \lambda^{R}_{1}v_i, \quad &(x,y) \in \R\times \R^+\backslash \Omega_{\rho}, \\
B_i(u,v_i) &= 0, \quad &x \in \R\backslash (-\rho, \rho). \\
\end{array}
\right.
\end{equation}
An easy computation shows that, for $(U,V_1,V_2)$ to be a supersolution of this system it is sufficient to have, for $i=1,2$,
\begin{equation}\label{alg syst}
\left\{
\begin{array}{rll}
-D\alpha^{2}  &\geq \nu_1\gamma_1 + \nu_2 \gamma_2  - (\mu_1 + \mu_2) +\lambda_{1}^{\rho},\\
-d_i(\alpha^{2} + \beta^{2}) &\geq \left( \sup_{\R\times\R^+ \setminus \Omega_{\rho}} a_i(x,y)\right)+\lambda_{1}^{\rho} ,\\
d_i\beta \gamma_i &\geq \mu_i -\nu_i \gamma_i.
\end{array}
\right.
\end{equation}
Take
$$
\gamma_i := \frac{ \mu_i}{d_i\beta +\nu_i} \ \text{ and } \ \alpha := \sqrt{\frac{(d_1\gamma_1 +d_2\gamma_2   ) \beta}{2D}}.
$$
With these values, the third inequality in \eqref{alg syst} is verified. We can choose $\beta$ small and $\rho$ large enough in such a way that the second one is also verified, because the left-hand side tends to $0$ as $\beta$ goes to $0$, while, recalling that $\lambda_1^\rho\to\lambda_1 $ as $\rho$ goes to $+\infty$ due to Theorem \ref{thm:truncated}, the right-hand-side is negative for $\rho$ sufficiently large due to \eqref{condition strict}.

The first inequality reduces to
$$
\frac{d_1\gamma_1 + d_2\gamma_2}{2}\geq\lambda_{1}^{\rho}.
$$
This inequality is fulfilled up to increasing $\rho$, because $\l \leq 0$ by \eqref{condition strict}. This concludes this step.

\medskip
\emph{Step 2. Estimates on the eigenfunctions.}\\
Let $\rho$, $(U,V_1, V_2)$ be as in the Step 1.  Take $R>\rho$. Because $(u_{R},v_{1,R},v_{2,R})$ is compactly supported and because $(U,V_1, V_2)$ is strictly positive, we can define
$$
M^{\star} := \inf\{ M>0 \ : \  M(U,V_1, V_2)\geq (u_{R},v_{1,R}, v_{2,R}) \}.
$$
By continuity, there is a contact point between $M^{\star}(U,V_1, V_2)$ and $(u_{R},v_{1,R}, v_{2,R})$. Observe that $M^{\star}(U,V_1, V_1)$ is a supersolution of \eqref{eigborn restrict} whereas $(u_{R},v_{1,R} , v_{2,R})$ is a subsolution of \eqref{eigborn restrict} in its domain of definition, because $\lambda_1^R\leq\lambda_1^\rho$. Thus, if the contact point occurs in the domain where these functions are respectively a supersolution and a subsolution of the same system, then the same arguments as in the proof of Lemma~\ref{int}, based on the strong maximum principle, would imply that $M^{\star}(U,V_1, V_1)$ and $(u_{R},v_{1,R} , v_{2,R})$ coincide there. This is impossible because $u_R(\pm R)=0$.

 this means that the contact point is reached either in $\overline{\Omega}_{\rho}$ or in $[-\rho, \rho]$. It follows that
$$
M^{\star} \leq  \max\{\sup_{(-\rho , \rho)}u_{R},\ \sup_{\Omega_{\rho}}v_{1,R}, \ \sup_{\Omega_{\rho}}v_{2,R}\}e^{(\alpha + \beta )2\rho},
$$
and then
\begin{multline*}
\forall R \geq \rho, \quad (u_{R},v_{1,R}, v_{2,R}) \leq \\ \max\left\{\sup_{(-\rho , \rho)}u_{R},\ \sup_{\Omega_{\rho}}v_{1,R} ,\ \sup_{\Omega_{\rho}}v_{2,R}\right\}e^{(\alpha + \beta )2\rho} (e^{\alpha  x  } , \gamma_1 e^{ \alpha  x - \beta y}, \gamma_2 e^{ \alpha  x - \beta y}).
\end{multline*}
Repeating the argument with 
$$
(U,V_1, V_2) := (e^{ -\alpha x} , \gamma_1 e^{ -\alpha x - \beta y} , \gamma_2 e^{ -\alpha x - \beta y})
$$
yields the result.
\end{proof}

We are now in a position to prove Proposition \ref{monotony}. We only prove that the eigenvalue is non-increasing with respect to the functions $a_1, a_2$, the result for $d_1, d_2$ and $D$ is proved similarly.
\begin{proof}[Proof of Proposition \ref{monotony}]
	
We let $\lambda_1$ and $\t \lambda_1$ denote the generalized principal eigenvalue associated with the functions $(a_1, a_2)$ and $(\t a_1,\t a_2)$ respectively, all the other parameters being identical. We assume that $(a_1, a_2) \geq (\t a_1,\t a_2)$ and $(a_1, a_2) \not\equiv (\t a_1,\t a_2)$. Consistently with our notations, for $R>0$, we let $\l^R$ and $\t \lambda_1^R$ denote the principal eigenvalues for the same problems restricted to the bounded domains $I_R, \O_R$.
	
We make use of the Rayleigh formula for $\lambda_{1}^R$, \eqref{eqvariational}. For $R,\e>0$, considering the principal eigenfunctions triplet $(u_R,v_{1,R}, v_{2,R})$  associated with $\lambda_{1}^{R}$, we find that
 
 \begin{align*}
\lambda_{1}^{R}&= \frac{    \int_{I_R}D \vert  u_R^{\prime} \vert^{2}  + \sum_{i=1,2}\left( \frac{\nu_i}{\mu_i}\int_{\Omega_{R}} \left( d_i\vert \nabla v_{i,R} \vert^{2} - a_i v_{i,R}^{2} \right) +  \frac{1}{\mu_i}\int_{I_R}(\mu_i  u_R- \nu_i v_{i,R}\vert_{y=0})^{2} \right)  }{ \int_{I_R} u_R^{2} +\sum_{i=1,2}  \frac{\nu_i}{\mu_i} \int_{\Omega_{R}}v_{i,R}^{2} }\\
&\geq\t \lambda_1^R+  \frac{\frac{\nu_1}{\mu_1}\int_{\O_R}  (\t a_1 - a_1 ) v_{1,R}^{2}  +\frac{\nu_2}{\mu_2}\int_{\O_R}  (\t a_2 - a_2 ) v_{2,R}^{2}   }{\int_{I_R} u_R^{2} +\sum_{i=1,2}  \frac{\nu_i}{\mu_i} \int_{\Omega_{R}}v_{i,R}^{2}} \\
&\geq\t \lambda_1^R
 \end{align*}
From Theorem \ref{thm:truncated}, we infer that $\l  \geq \t \lambda_1$. To show the strict inequality, we argue by contradiction and assume that $\l  = \t \lambda_1$. Then, the above computation yields, for $k=1,2$,
$$
 \frac{\int_{\O_R}  (b_k - a_k ) v_{k,R}^{2}   }{\int_{I_R} u_R^{2} +\sum_{i=1,2}  \frac{\nu_i}{\mu_i} \int_{\Omega_{R}}v_{i,R}^{2}}\underset{R \to +\infty}{\longrightarrow} 0.
$$
We now normalize $(u_R,v_{1,R}, v_{2,R})$ so that 
\begin{equation}\label{norm tech}
\min\{u_R(0), v_{1,R}(0,0), v_{2,R}(0,0)\} =1.
\end{equation}
In view to Lemma \ref{exponential estimate} and to Theorem \ref{Harnack}, we can find $C>0$ independent of $R$ such that
$$
\int_{I_R} u_R^{2} + \frac{\nu_1}{\mu_1} \int_{\Omega_{R}}v_{1,R}^{2} +  \frac{\nu_2}{\mu_2} \int_{\Omega_{R}}v_{2,R}^{2}< C.
$$
Hence, for $k=1,2$, there holds that
\begin{equation*}
\int_{\O_R}  (b_k - a_k ) v_{k,R}^{2}     \underset{R \to +\infty}{\longrightarrow} 0.
\end{equation*}
Then, for any given $\bar R>0$, 
	using again Theorem \ref{Harnack} and condition~\eqref{norm tech}, we can find 
	$C_{\bar R}>0$ such that, for $k=1,2$,
	$$
	C_{\bar R}\int_{\O_{\bar R}}  (b_k - a_k )\leq
	\int_{\O_{\bar R}}  (b_k - a_k ) v_{k,R}^{2}     \underset{R \to +\infty}{\longrightarrow} 0.
	$$
	This is impossible because either $b_1>a_1$ or $b_2>a_2$ in a set of positive measure. We have reached a contradiction, and the result is thereby complete.
\end{proof}

\appendix
\setcounter{section}{1}
\section*{Appendix}
\renewcommand{\theequation}{\thesection.\arabic{equation}}
\setcounter{equation}{1}
\setcounter{theorem}{0}


We show here how Proposition \ref{propKR} is derived from the Krein-Rutman theorem, stated below as Theorem \ref{th KR}. 
In the whole section, we assume that $R>0$ is fixed. We recall that we consider functions in the Sobolev spaces $W^{2,p}(I_R)$ and $W^{2,p}(\O_R)$. We will use several times the strong elliptic maximum principle for strong solutions in this section, see \cite[Theorem 9.6]{GT}.

Let us recall that we consider the eigenproblem \eqref{eigborn} over functions $(u,v_1,v_2) \in W^{2,p}(I_R)\times (W^{2,p}(\O_R))^2$. We start with proving the positivity Lemma \ref{int}.

\begin{proof}[Proof of Lemma \ref{int}]
Let $(u,v_1,v_2) \in W^{2,p}(I_R)\times (W^{2,p}(\O_R))^2$, $(u,v_1,v_2) \geq 0$, $(u,v_1,v_2) \not\equiv (0,0,0)$ be a supersolution of \eqref{eigborn}, for some $\lambda \in \R$. We consider $C^1$ representative for $u$ and $v_1,v_2$ in the sequel.

Observe first that $u,v_1,v_2 \not\equiv 0$. Indeed, $u\equiv 0$ would imply
$v_1=v_2=0$ on $I_R \times \{0\}$ and then
$\partial_{y}v_1=\partial_{y}v_2 =0$ on $I_R \times \{0\}$. Therefore, because each $v_i$ is a non-negative solution
 of an elliptic equation, the strong maximum principle would yield $v_1\equiv v_2\equiv0$, 
 which is impossible. Conversely, $v_i\equiv 0$ for some $i\in\{1,2\}$
 immediately yields $u\equiv0$ due to $B_i(u,v_i)= 0$, 
 whence $v_2\equiv 0$. This shows that $u,v_1,v_2 \not\equiv 0$.

\medskip
\emph{Step 1. Strong positivity of $u$.}\\
Because $v_1,v_2\geq 0$, we have
$$
-Du^{\prime \prime} - c u^{\prime} +(\mu_1+\mu_2 - \lambda)u \geq 0,
$$
i.e, $u$ is a non-negative supersolution of an elliptic equation. The strong elliptic maximum principle yields
$$ 
u >0 \ \text{ on }\ I_R.
$$
Moreover, assume that $u(R) =0$. Then, the Hopf Lemma yields that
$$
u^{\prime}(R)<0,
$$
and if $u(-R)=0$, we would similarly have $u^{\prime}(-R)>0$.

\medskip
\emph{Step 2. Strong positivity of $v_i$.}\\
We only deal with $v_1$, the situation for $v_2$ being symmetric. Because $v_1\geq0$ is a solution of an elliptic equation, the strong maximum principle yields that $v_1>0$ on $\Omega_{R}$. Let us check that $v_1>0$ on $I_R \times \{ 0 \}$. If there were $x \in (-R,R)$ such that $v_1(x,0)=0$, then we would have $-\partial_{y}v_1(x,0) = \mu_1 u(x) >0$ thanks to the first step, but this would yield that $v_1<0$ for some points close to $(x,0)$, which is impossible.

Now, assume that $v_1(x,y)=0$ for some $(x,y) \in \partial \O_R \backslash \ol{ I_{R} \times \{ 0\} }$. The Hopf lemma applies there, and then $(x,y)\cdot \nabla v_1(x,y) <0$. Now, let us show that we also have $(R,0) \cdot \nabla v_1(R,0) <0$. To do so, we turn $v_1$ into a function defined on the whole ball $B_R$ and supersolution of an elliptic equation. This can be done using the same arguments as in the proof of Lemma \ref{WH}. We start with calling
$$
w(x,y) := v_1(x,y) e^{-\frac{\nu_1}{d_1}y}.
$$
It follows that $-d_1\partial_{y}w\vert_{y=0} \geq \mu_1 u \geq 0 $
and therefore
\begin{equation*}
\left\{
\begin{array}{rlll}
-d_1\Delta w-2\nu_1 \partial_{y}w -c_1 \partial_{x}w-(\frac{\nu_1^2}{d_1}+a_1+\lambda)w &\geq  0 \quad &\text{on}\  \O_R, \\
-d_1\partial_{y}w\vert_{y=0} &\geq 0 \quad &\text{on} \ I_R.
\end{array}
\right.
\end{equation*}
Now, we extend $w$ to $B_R$ by symmetry, by defining $\t w(x,y) := w(x,\vert y \vert)$, for $(x,y) \in~B_R$. Consider now the operator $\t L$ defined by 
$$\tilde{L}:=d_1\Delta + 2 \nu_1 H(y) \partial_{y} +c_1\partial_{x} +\frac{\nu_1^2}{d_1}+a_1+\lambda,$$
where $H(y) = 1$ if $y\geq0$ and $H(y)=-1$ if $y<0$. Because~$\partial_{y}w\vert_{y=0}\leq0$, it is readily seen that~$\t w$ 
satisfies $-\tilde{L}\t w \geq 0$ in $\O_R$, in the weak (i.e., $W^{1,2}$) sense. Then $w$ is supersolution of an elliptic equation on $B_R$. On this domain, we can apply the Hopf Lemma at $(R,0)$ to get that the exterior normal derivative of $w$ is strictly negative, which boils down here to $(R,0) \cdot \nabla v_1(R,0) <0$.

We can prove similarly that $(-R,0) \cdot \nabla v_1(-R,0) <0$

\medskip
\emph{Step 3.Conclusion.}\\
To conclude the proof, consider $(\phi,\psi_1,\psi_2) \in \mc C_R$, and let us show that there is $\e>0$ such that
$$
(u,v_1,v_2) > \e (\phi,\psi_1,\psi_2).
$$
We start to show that there is $\e>0$ such that
\begin{equation}\label{ep v}
v_1> \e \psi_1.
\end{equation}
The situation for $v_2,\psi_2$ is similar. We argue by contradiction: assume that, for every $n\in \N$, we can find $(x_n,y_n) \in \O_R$ so that
\begin{equation}\label{n}
v_1(x_n,y_n) < \frac{1}{n}\psi_1(x_n,y_n).
\end{equation}
We can find $(\ol x, \ol y) \in \ol{\O}_R$ such that, up to extraction,
$$
(x_n,y_n) \underset{n \to +\infty}{\longrightarrow} (\ol x, \ol y).
$$
It is impossible to have $(\ol x, \ol y) \in \O_R\cup (I_R \times \{ 0 \})$ because \eqref{n} would yield
$$
v_1(\ol x, \ol y) =0,
$$
which is impossible thanks to the second step. Then, we are left to discard the case $\vert (\ol x, \ol y) \vert =R$. But, dividing \eqref{n} by $R - \vert ( x_n, y_n) \vert $ yields
$$
\frac{v_1(x_n,y_n)}{R - \vert ( x_n, y_n) \vert} < \frac{1}{n}\left(\frac{\psi_1(x_n,y_n)}{R - \vert ( x_n, y_n) \vert}\right).
$$ 
 The left-hand side goes to $(\ol x, \ol y) \cdot \nabla v (\ol x , \ol y)$ as $n$ goes to $+\infty$, while the right-hand side goes to zero, because $v_1$ and $\psi_1$ are $C^1$ up to the boundary and $\psi_1(\ol x , \ol y) =0$. This would yield that $(\ol x, \ol y) \cdot \nabla v_1 (\ol x , \ol y) =0$, but this is in contradiction with the second step, and then \eqref{ep v} holds true for $\e$ sufficiently small. The similar result for $u$ is derived similarly (this is actually easier as it is in dimension $1$), and then the result follows.
\end{proof}

Let us now state the Krein-Rutman theorem, on which relies the proof of Proposition \ref{propKR}.

\begin{theorem}\label{th KR}
Let $E$ be a real Banach space ordered by a salient cone $K$ (i.e., $K\cap (-K) = \{ 0\} $) with non-empty interior. Let $T$ be a linear compact operator. Assume that $T$ is strongly positive (i.e., $T(K\backslash \{ 0\}) \subset \inter K$). Then, there exists a unique eigenvalue $\lambda_1$ associated with some $u_1 \in K \backslash \{0\}$. Moreover, for any other eigenvalue $\lambda$, there holds
$$
\lambda_1 > \Re (\lambda).
$$
\end{theorem}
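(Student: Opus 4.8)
This is a classical result; the plan is simply to recall its standard proof. I would identify $\lambda_1$ with the spectral radius $r:=r(T)$ of $T$, show $r>0$, produce an eigenvector for $r$ lying in $\inter K$ by a resolvent-and-compactness argument, and then use the \emph{strong} positivity to get uniqueness of the positive eigendirection, algebraic simplicity of $r$, and the strict domination $\Re(\lambda)<\lambda_1$ for the rest of the spectrum.

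\emph{Positivity of $r$.} Fix $e\in\inter K$. Strong positivity gives $Te\in\inter K$, so, $e$ being interior to the closed cone, $Te\ge\varepsilon e$ for some $\varepsilon>0$ (for the order induced by $K$); iterating the positive operator $T$ yields $T^ne\ge\varepsilon^n e$. Choosing $\phi\in K^*\setminus\{0\}$ — which exists because $\inter K\neq\emptyset$, and then $\phi(e)>0$ — one gets $\|T^n\|\,\|\phi\|\,\|e\|\ge\phi(T^ne)\ge\varepsilon^n\phi(e)$, hence $r=\lim_{n\to\infty}\|T^n\|^{1/n}\ge\varepsilon>0$.

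\emph{A positive eigenvector.} For $\mu>r$ the resolvent $R_\mu:=(\mu I-T)^{-1}=\sum_{n\ge0}\mu^{-n-1}T^n$ maps $K$ into $K$ and, by strong positivity (look at the $n=1$ term), maps $K\setminus\{0\}$ into $\inter K$. I would first check that $\|R_\mu e\|\to+\infty$ as $\mu\downarrow r$: otherwise, passing to the limit (via the compactness of $T$) in $\mu R_\mu e-TR_\mu e=e$ along a sequence $\mu_n\downarrow r$ produces $\xi\in\inter K$ with $(rI-T)\xi=e$, whence, since $e\ge c\xi$ for some $c>0$, $T\xi\le(r-c)\xi$, which would force $r(T)\le r-c$, absurd. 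Then, writing $w_\mu:=R_\mu e/\|R_\mu e\|$, the identity $\mu w_\mu-Tw_\mu=e/\|R_\mu e\|\to0$ together with the compactness of $T$ gives, along a subsequence $\mu_n\downarrow r$, a limit $w$ with $\|w\|=1$, $w\in K$ and $Tw=rw$; then $w=r^{-1}Tw\in\inter K$, and one sets $\lambda_1:=r$.

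\emph{Simplicity, uniqueness, and the spectral gap.} If $Tz=rz$ with $z$ real, set $t^\star:=\sup\{t\ge0:\ w-tz\in K\}$, which is finite and positive since $w\in\inter K$ and $K$ is closed; then $w-t^\star z\in K$ is again fixed by $r^{-1}T$, and were it nonzero it would lie in $\inter K$, contradicting the maximality of $t^\star$; hence $z\in\R w$, so $\Ker(rI-T)=\R w$. The same ordering device rules out a nontrivial Jordan block at $r$ (algebraic simplicity) and shows that no eigenvalue other than $r$ admits an eigenvector in $K$ (such an eigenvector would be interior, its eigenvalue positive, and comparison with $w$ forces it to equal $r$), whence the uniqueness of $\lambda_1$. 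Finally, $r$ is a simple pole of $R_\mu$ with rank-one positive Riesz projection $P$, so $E=\R w\oplus\Ker P$ with $\Ker P$ invariant under $T$; one then shows the spectral radius of $T|_{\Ker P}$ is strictly less than $r$, so that $|\lambda|<r$, and \emph{a fortiori} $\Re(\lambda)<r=\lambda_1$, for every $\lambda\in\sigma(T)\setminus\{r\}$. The hard part will be exactly this last step — excluding spectral values of $T$ of modulus $r$ other than $r$ itself — and it is where \emph{strong} positivity is indispensable (mere positivity allows cyclic peripheral spectrum on $\{|\lambda|=r\}$): one compares a hypothetical eigenvector associated with such a $\lambda$ with the strictly positive $w$, using that $T$ sends $K\setminus\{0\}$ strictly inside the cone. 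The other ingredients — positivity of $r$, the compactness construction of $w$, and the ordering trick — are routine once the order-unit structure given by $\inter K\neq\emptyset$ is exploited.
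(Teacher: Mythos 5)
The paper does not prove Theorem~\ref{th KR}; immediately after the statement it writes ``See \cite{KR} for a proof of this result'' and then moves on to apply it. So there is no in-paper proof to compare against, and the right question is simply whether your outline is sound. It is, broadly: identifying $\lambda_1$ with $r(T)$, proving $r(T)>0$ by iterating $Te\ge\varepsilon e$ for an interior $e$, obtaining a positive eigenvector as a normalized limit of $R_\mu e$ as $\mu\downarrow r$ using compactness, and then using the order-unit structure of $\inter K$ to get simplicity, uniqueness of the positive eigendirection, and the spectral gap --- this is the classical route (Kre\u{\i}n--Rutman, and the standard modern expositions). Two places where you are more cavalier than the argument actually allows: (i) the step ``$T\xi\le(r-c)\xi$ with $\xi\in\inter K$ forces $r(T)\le r-c$'' is a genuine comparison lemma; it uses that $\xi$ is an order unit, and without some normality of the cone, or a further appeal to compactness, it is not a one-line consequence of $T^n\xi\le(r-c)^n\xi$ (order boundedness of $\{T^ny\}$ does not by itself give norm bounds). (ii) The exclusion of peripheral spectrum $\{|\lambda|=r,\ \lambda\ne r\}$, which you correctly flag as the hard step, is where the bulk of a complete proof lives; you sketch the strategy but do not carry it out. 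Since the paper itself delegates this theorem entirely to a reference, you should do the same unless there is a specific pedagogical reason to reproduce the proof; if you do reproduce it, the two points above need to be filled in.
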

See \cite{KR} for a proof of this result. Now, let us take $(g_0,g_1,g_2) \in  \mc C_R$, where $\mc C_R$ is defined by \eqref{space}. Then, for $M>0$, consider the following problem:
\begin{equation}\label{syst KR}
\left\{
\begin{array}{rllrl}
-L_{0}(u,v_1,v_2) + M u&= g_0 \  &\text{ in }\ I_R, \\
-L_{i}(v_i) +M v_i &=g_i \ &\text{ in }\  \Omega_{R},  \\
B_i(u,v_i) &= 0 \ &\text{ in }  \ I_R. \\
\end{array}
\right.
\end{equation}
We define the cone of non-negative functions in $\mc C_R$:
$$
\mc C_R^+ := \{ (u,v_1,v_2) \in \mc C_R \ : \ (u,v_1,v_2) \geq (0,0,0) \}.
$$
We have the following technical result:
\begin{prop}\label{T}
There is $M\in \R$ large enough so that the system \eqref{syst KR} satisfies the following:

\begin{enumerate}
\item For any $(g_0,g_1,g_2) \in \mc C_R$, there is a unique solution $(u,v_1,v_2)\in W^{2,p}(I_R)\times (W^{2,p}(\O_R))^2$ of \eqref{syst KR}.

\item There is $C>0$ such that
 $$
 \| u \|_{W^{2,p}(I_{R})} + \| v_1 \|_{W^{2,p}(\O_R)} +\| v_2 \|_{W^{2,p}(\O_R)} \leq C \left(\| g_0 \|_{C^{1}(\ol{I_R})} + \|g_1\|_{C^{1}(\ol{\O_R})}+  \|g_2\|_{C^{1}(\ol{\O_R})}\right).
 $$
\item If $(g_0, g_1,g_2) \in \mc C_R^+\backslash \{(0,0,0)\}$, then $(u,v_1,v_2) \in \inter \mc C_R^+$.
\end{enumerate}

\end{prop}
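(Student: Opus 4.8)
The plan is to realize the solution operator of \eqref{syst KR} by a variational argument for $M$ large, to upgrade its regularity to $W^{2,p}$ by the elliptic $L^p$ theory, and to read off the strong positivity from the already established Lemma~\ref{int}. For items~1 and~2, I would rewrite \eqref{syst KR} in weak form by multiplying the road equation by a test function $\phi$, the $i$-th field equation by $\tfrac{\nu_i}{\mu_i}\psi_i$, integrating by parts and using the conditions $B_i(u,v_i)=0$, exactly as in the computation leading to \eqref{variational2}. This produces a bilinear form $\mathcal{B}_M$ on the Hilbert space $\mathcal{H}_R=H^1_0(I_R)\times(H^1_0(\Omega_R\cup(I_R\times\{0\})))^2$ of Proposition~\ref{variational}, whose diagonal equals the energy appearing in \eqref{eqvariational} plus $M\big(\|u\|_{L^2(I_R)}^2+\sum_i\tfrac{\nu_i}{\mu_i}\|v_i\|_{L^2(\Omega_R)}^2\big)$; since the drift contributions $c\!\int u'u$ and $c_i\!\int \partial_x v_i\,v_i$ vanish on the diagonal owing to the Dirichlet conditions, and the $a_i$-terms are absorbed by the $M$-term, $\mathcal{B}_M$ is bounded and coercive on $\mathcal{H}_R$ for $M$ large enough. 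As any $(g_0,g_1,g_2)\in\mathcal{C}_R\subset L^2$ defines a bounded functional on $\mathcal{H}_R$, the Lax--Milgram lemma yields a unique weak solution $(u,v_1,v_2)\in\mathcal{H}_R$, with $\|(u,v_1,v_2)\|_{\mathcal{H}_R}\le C\|(g_0,g_1,g_2)\|_{C^1}$. To reach $W^{2,p}$, one treats $u$ as the solution of a one-dimensional problem whose right-hand side involves $g_0$ and the traces $v_i|_{y=0}$, controlled in $L^p(I_R)$ by $\|v_i\|_{H^1(\Omega_R)}$ through the trace theorem of Gagliardo, and one treats each $v_i$, after the substitutions $w=v_ie^{-\nu_iy/d_i}$ and $w\mapsto w-\tfrac{\mu_i}{d_i}u(x)y$ and the reflection across $\{y=0\}$ used in the proof of Lemma~\ref{WH}, as the solution on the smooth ball $B_R$ of an elliptic equation with bounded coefficients and right-hand side controlled by $\|g_i\|_{L^p}$ and $\|u\|_{W^{2,p}(I_R)}$. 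Combining the interior and boundary $W^{2,p}$ estimates with the $\mathcal{H}_R$-bound gives the solution in $W^{2,p}(I_R)\times(W^{2,p}(\Omega_R))^2$ for every $p>2$ together with the estimate of item~2; uniqueness in this class follows from uniqueness of the weak solution.

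\textbf{Item 3.} Let $(g_0,g_1,g_2)\in\mathcal{C}_R^{+}\setminus\{(0,0,0)\}$ and let $(u,v_1,v_2)$ be the solution. I would first check that $(u,v_1,v_2)\ge(0,0,0)$: testing $\mathcal{B}_M$ against the triple of negative parts $(u^-,v_1^-,v_2^-)\in\mathcal{H}_R$ and using coercivity of $\mathcal{B}_M$ together with $g_i\ge0$ and the cooperative signs of the couplings $\nu_iv_i|_{y=0}$ and $\mu_iu$ forces $u^-=v_1^-=v_2^-=0$. Moreover $(u,v_1,v_2)\not\equiv(0,0,0)$, since the zero triple would give $(g_0,g_1,g_2)=(0,0,0)$ in \eqref{syst KR}. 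Then $-L_0(u,v_1,v_2)+Mu=g_0\ge0$, $-L_i(v_i)+Mv_i=g_i\ge0$ and $B_i(u,v_i)=0$ say exactly that $(u,v_1,v_2)$ is a non-negative, non-trivial supersolution of \eqref{eigborn} with $\lambda=-M$, so Lemma~\ref{int} applies. Fixing once and for all an interior point $e\in\inter\mathcal{C}_R^{+}$ (a smooth triple, positive in the interiors of the respective domains and with strictly negative inward normal derivative along the Dirichlet boundary), Lemma~\ref{int} provides $\varepsilon>0$ with $(u,v_1,v_2)\ge\varepsilon e$; since $\varepsilon e\in\inter\mathcal{C}_R^{+}$ and $\inter\mathcal{C}_R^{+}+\mathcal{C}_R^{+}\subset\inter\mathcal{C}_R^{+}$, we conclude $(u,v_1,v_2)=\varepsilon e+\big((u,v_1,v_2)-\varepsilon e\big)\in\inter\mathcal{C}_R^{+}$, which is item~3.

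\textbf{Main obstacle.} The delicate point is the $W^{2,p}$ regularity, and the corresponding a priori bound, up to the non-smooth part of $\partial\Omega_R$, in particular at the corners $(\pm R,0)$ where the Dirichlet arc meets the diameter carrying the Robin-type condition. This is precisely what the oblique-reflection device is designed to handle: the exponential conjugation removes the mismatch in the normal-derivative condition, the subtraction of a term linear in $y$ homogenizes it, and reflection across $\{y=0\}$ turns $\Omega_R$ into the smooth ball $B_R$ with an elliptic operator having bounded (though discontinuous first-order) coefficients, to which the standard $L^p$ theory applies; I would follow the reflection arguments already carried out in the proofs of Lemmas~\ref{WH} and~\ref{int}.
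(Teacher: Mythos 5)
Your proof is correct and is consistent with the approach the paper outlines: item~2 uses the very same exponential conjugation, subtraction of $\tfrac{\mu_i}{d_i}u(x)y$ and reflection across $\{y=0\}$ as in Lemma~\ref{WH}, item~3 rests on Lemma~\ref{int}, and for item~1 the paper simply cites~\cite{BRR4} while your Lax--Milgram argument (coercivity for $M$ large after absorbing $a_i$, with the drift contributions vanishing on the diagonal under the Dirichlet conditions) is a natural realization of it. You also usefully supply the intermediate non-negativity step --- testing the weak form against the negative parts, which closes because with the weight $\tfrac{\nu_i}{\mu_i}$ on the field equations the boundary coupling assembles into the perfect square $\tfrac{1}{\mu_i}(\mu_i u-\nu_i v_i|_{y=0})^2$ and the resulting cross-terms $\mu_i\nu_i(u^{+}v_i^{-}+v_i^{+}u^{-})$ have the right sign --- a step needed before Lemma~\ref{int} can be invoked and that the paper leaves implicit.
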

Let us explain how Proposition \ref{T} is derived. The first statement can be proved as in \cite[Proposition A.1]{BRR4}. The second statement can be obtained exactly as in Lemma \ref{WH}, by extending $v$ to a solution of an elliptic equation on the whole ball $B_R$. The third statement is a consequence of Lemma \ref{int}.

\medskip
Owing to the first statement in Proposition \ref{T}, we can take $M$ large enough so that we can define the following linear operator:
\begin{equation*}
\begin{array}{rcrc}
T \ : \ & \mc C_R  &\to& \mc C_R \\
&(g_0,g_1,g_2) &\mapsto& (u,v_1,v_2),
\end{array}
\end{equation*}
where $(u,v_1,v_2)$ is the solution of \eqref{syst KR}. The second statement yields that $T$ is compact:~indeed, $W^{2,p}$ is compactly embedded in $C^{1}$. The third statement yields that $T$ is strictly positive with respect to the salient cone $\mc C_R^+$. We can then apply the Krein-Rutman Theorem \ref{th KR} to the operator $T$ to derive Proposition \ref{propKR}.

\bigskip
\noindent \textbf{Acknowledgements:}  This work has been supported by the ERC Advanced Grant 2013 n. 321186 ``ReaDi -- Reaction-Diffusion Equations, Propagation and Modelling'' held by H.~Berestycki. This work was also partially supported by the French National Research Agency (ANR), within  project NONLOCAL ANR-14-CE25-0013. R.~Ducasse is supported by the LabEx Archimède.
During this research, L.~Rossi was on academic leave from the University of Padova. This paper was completed while H. Berestycki was visiting the institute of Advanced Study of Hong Kong University of Science and Technology and its support is gratefully acknowledged.

\end{document}